\documentclass[]{amsart}
\usepackage[margin=3cm]{geometry}
\usepackage{orcidlink}
\usepackage{supertabular}
\usepackage{amsfonts,latexsym,rawfonts,amsmath,amssymb, amsthm}
\usepackage{longtable,setspace}
\usepackage{algorithm2e}
\usepackage{caption}
\usepackage[T1]{fontenc}
\usepackage{latexsym,lscape,rawfonts}
\usepackage{graphicx}
\usepackage{array}
\newcolumntype{P}[1]{>{\centering\arraybackslash}p{#1}}
\newcolumntype{M}[1]{>{\centering\arraybackslash}m{#1}}

\usepackage{mathrsfs, enumitem}
\usepackage[title]{appendix}
\usepackage{hyperref}[backref]

\usepackage{galois}
\usepackage[all,cmtip]{xy}
\usepackage{extarrows,color}
\usepackage{times}

\newtheorem{thm}{Theorem}[section]

\newtheorem*{fixed point criterion}{Fixed point criterion}

\newtheorem{lem}[thm]{Lemma}

\newtheorem{claim}{Claim}

\theoremstyle{definition}
\newtheorem{defn}[thm]{Definition}

\newtheorem{ques}[thm]{Question}
\theoremstyle{remark}
\newtheorem{rem}[thm]{Remark}
\numberwithin{equation}{section}

\newcommand{\N}{\mathbb N}
\newcommand{\RR}{\mathcal R}

\newcommand{\gs}{\mathcal{S}}
\newcommand{\W}{\mathcal{W}}
\newcommand{\nf}{\mathfrak{nf}}
\newcommand{\cl}{\mathrm{CL}}
\newcommand{\rs}{\mathfrak{rs}}
\newcommand{\llfr}{\mathrm{LLFR}}

\title{An explicit bound for the conjugator length function of a surface group}

	\author[Ke Wang]{Ke Wang\orcidlink{0009-0005-7108-3725}}
	\address{School of Mathematics and Statistics, Xi'an Jiaotong University, Xi'an 710049, P. R. China}
	\email{keqiyehuopo@stu.xjtu.edu.cn}
	
	\author[Qiang Zhang]{Qiang Zhang\orcidlink{0000-0001-6332-5476}}
	\address{School of Mathematics and Statistics, Xi'an Jiaotong University, Xi'an 710049, P. R. China}
	\email{zhangq.math@mail.xjtu.edu.cn}

	\thanks{The authors are partially supported by National Natural Science Foundation of China (No. 12471066).}
	
	\subjclass[2020]{20F65, 20F10.}
    \keywords{Surface group, conjugacy problem, conjugator length function, word length, bound.}	
	\date{\today}
\date{\today}

\begin{document}
\begin{abstract}
For a surface group $\pi_1(\Sigma_g)=\langle c_1,\dots , c_{2g}\mid c_1\cdots c_{2g}c_1^{-1}\cdots c_{2g}^{-1}\rangle$ with genus $g\geq 2$, we provide an explicit bound $n-1\leq \cl(2n)=\cl(2n+1)\leq n+8g-1$ for the conjugator length function $\cl:\N\to\N$ of $\pi_1(\Sigma_g)$ via a detailed analysis of conjugation reductions. 
\end{abstract}
\maketitle

\section{Introduction}
In the early 20th century, Dehn introduced three important problems: the word problem, the conjugacy problem \cite{De12} and the isomorphism problem. Among them, the conjugacy problem has become a fundamental decision problem in group theory today, asking whether two elements in a finitely generated group are conjugate. 

A key quantitative aspect of the conjugacy problem is the \emph{conjugator length function} $\cl:\N\to\N$. For two conjugate elements $u,v$, let $\cl(u,v)$ denote the minimal word length $|x|$ of conjugator $x$ s.t. $x^{-1}ux=v$. Then $\cl(n)$ is defined to be the least integer $N$ such that $\cl(u,v)\leq N$ for any conjugate elements $u$ and $v$ with word length $|u|+|v|\leq n$. There is a lot of research on the conjugator length function. The groups involved include hyperbolic groups \cite{AB23, BH05, Ly89}, relatively hyperbolic groups \cite{AS16}, Thompson's groups \cite{BM23}, solvable groups and lattices of semisimple Lie groups \cite{Sa12},  etc. Furthermore, Bridson and Riley wrote a series of articles to study the conjugator length function in finitely presented groups. In \cite{BR25a, BR25b}, they proved that for any $d\in\N$, there are finitely presented groups for which $\cl(n)\simeq n^d$. In \cite{BR25c}, they proved that the set of exponents $e$ for which there exists a finitely presented group with $\cl(n)\simeq n^e$ is dense in $[2,\infty)$, and in \cite{BR25d}, they constructed the first examples of finitely presented groups where the conjugator length function is exponential. 

In (Gromov) hyperbolic groups, the conjugator length function was proven to be linear by Lys\"enok in \cite{Ly89}: a hyperbolic group $\Gamma$ (with respect to any fixed finite generating set) has a conjugator length function $\cl(n)\leq C_0n+C_1$. In \cite{BH05}, Bridson and Howie proved that the coefficient $C_0$ can be taken as $1$ for hyperbolic groups. In \cite{AB23}, Abbott and Behrstock proved that in a hierarchically hyperbolic group, the conjugator length function for the conjugate Morse elements is also linear. 

In this paper, we focus on \emph{surface groups}, that is, the fundamental group $\pi_1(\Sigma_g)$ of a closed orientable surface $\Sigma_g$ of genus $g\geq 2$. Let $G$ be a surface group with the generating set $\gs=\{c_1,\dots,c_{2g}\}$ and the following \emph{symmetric presentation}
\begin{equation}\label{symmetric presentation}
G=\pi_1(\Sigma_g)=  \left\langle c_1,\dots , c_{2g}\mid c_1\cdots c_{2g}c_1^{-1}\cdots c_{2g}^{-1} \right\rangle, \qquad g\geq 2.
\end{equation} 
By presenting a detailed investigation of reductions of conjugation in the surface group $G$, we show that the linear coefficient $C_0$ can be taken as $1/2$ (which is sharp) in $G$. 
More precisely, our main result is as follows.
\begin{thm}\label{main thm1}
  Let $G$ be a surface group with a symmetric presentation (\ref{symmetric presentation}).
Then the conjugator length function $\cl:\N\to\N$ of $G$ satisfies
$$n-1\leq \cl(2n)=\cl(2n+1)\leq n+8g-1$$
for any genus $g\geq 2$.
\end{thm}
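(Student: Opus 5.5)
The proof splits into three parts: the parity equality, the lower bound, and the upper bound. The upper bound will carry most of the work.

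\emph{Parity.} The relator has even length $4g$, so there is a homomorphism $G\to\mathbb Z/2$ sending each generator to $1$, and every word representing an element $h$ has length congruent to $|h|$ mod $2$. Conjugate elements $u,v$ have the same image, so $|u|\equiv|v|$ and $|u|+|v|$ is always even. Thus every conjugate pair with $|u|+|v|\le 2n+1$ already has $|u|+|v|\le 2n$. Hence $\cl(2n)=\cl(2n+1)$, and it is enough to bound $\cl(2n)$.

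\emph{Lower bound.} I will take $u=c_1^{\,n-1}c_2$ (or another cyclically reduced word that no relator fragment can shorten, e.g.\ $c_1^{n-1}c_3$) and $v=c_1^{-k}uc_1^{k}$ with $k$ about $(n-1)/2$, written as a cyclic permutation $c_1^{a}c_2c_1^{b}$ with $a+b=n-1$. This gives $|u|+|v|=2n$. Conjugators from $u$ to $v$ form the coset $C(u)\,c_1^{k}$. Since $u$ is not a proper power, its centralizer in the surface group is $\langle u\rangle$. I will then show $|u^m c_1^k|\ge n-1$ for every $m$. The natural approach is the small-cancellation (Dehn) structure of geodesics for $C'(1/6)$ presentations with $g\ge2$: a freely reduced word containing no more than half of a relator is geodesic. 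With $a,b$ chosen appropriately, the shortest conjugator is a power of $c_1$ of length at least $n-1$. If the first choice fails, I will adjust the choice of letters (e.g.\ $c_1^{p}c_3^{q}$) so that the shortest conjugator has length $n-1$.

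\emph{Upper bound.} Let $u,v$ be conjugate with $|u|+|v|\le 2n$, written as geodesic words. I will first cyclically reduce each: replace the word by a cyclically geodesic representative, meaning every cyclic permutation is geodesic, obtained by conjugating by a prefix. Using Dehn's algorithm for the symmetric presentation, I will show that a conjugator of length at most $\min(|u|,|v|)/2$ plus a bounded correction, of order $g$, brings each of $u$ and $v$ to a cyclically geodesic word. The core step is then the cyclically geodesic case. By the conjugacy theorem for $C'(1/6)$ groups (the annular diagram / Greendlinger analysis, in the explicit form of Lyndon--Schupp for the surface relator), two cyclically reduced conjugate words either are cyclic permutations of one another, up to replacing relator halves of length $2g$ by the complementary halves, or are connected by a thin annular diagram. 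That diagram is one cell thick, and its boundary words differ by rotations and half-relator swaps. A cyclic permutation of a word of length $\ell$ can be realized by a conjugator of length at most $\ell/2$, choosing the shorter direction. Each swap region and each boundary adjustment costs at most $2g$. Combining these, the total cost is $\tfrac12(|u|+|v|)$ plus a constant, i.e.\ $n+8g-1$, the $8g$ coming from a few crossings of half-relators.

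\emph{Main obstacle.} The hardest step is making the annular-diagram constant explicit. I must carefully count how many half-relator boundary pieces the conjugator path must traverse, and show that the reduction conjugators and the rotation conjugator combine without exceeding $n$ in the linear term. To control this, I will track a single path in the annulus from the outer to the inner boundary. I will choose it to go around the shorter arc of each boundary, and bound the number of $2g$-length relator segments it crosses by four.
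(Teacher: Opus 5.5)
Your parity argument, using the homomorphism $G\to\mathbb{Z}/2$ that sends every generator to $1$, is correct and cleaner than the paper's count of $S_{(i)}$-reductions. The lower-bound example, however, cannot work. Your $v=c_1^{a}c_2c_1^{b}$ is a cyclic permutation of the cyclically reduced word $u=c_1^{n-1}c_2$, and, as you note yourself in the upper-bound part, such pairs are conjugate by something of length about half the word length. Among the conjugators $u^mc_1^{k}$, the choices $m=0$ and $m=-1$ give $c_1^{k}$ and $c_2^{-1}c_1^{k-n+1}$, so $\cl(u,v)\le\min(k,n-k)\le n/2$. For $k=n-1$ the conjugator is just $c_2^{-1}$. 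No change of letters helps while $v$ is a cyclic permutation of a cyclically reduced $u$: the extremal pairs need a long conjugating tail. The paper takes $u=c_1$ and $v=c_2^{1-n}c_1c_2^{n-1}$. Then $|u|+|v|\le 2n$, and the conjugators are $c_1^mc_2^{n-1}$, since the centralizer of $c_1$ is $\langle c_1\rangle$. The abelianization $G\to\mathbb{Z}^{2g}$ gives $|c_1^mc_2^{n-1}|\ge |m|+n-1\ge n-1$. Your geodesicity criterion is also false for this presentation: $\overline{b_1(b_2\cdots b_{2g})^tb_{2g+1}}$ is freely reduced and contains at most half a relator, yet $S_{(3,t)}$ shortens it by $2$.

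For the upper bound, the ingredients you list do not yield $n+8g-1$. The claim that a conjugator of length $\min(|u|,|v|)/2+O(g)$ makes each of $u,v$ cyclically geodesic is false; for the pair above, $v$ needs length about $n-1$. Even with $|u|/2$ and $|v|/2$ in its place, adding a rotation of up to $|A|/2$ only gives $n+|A|/2+O(g)$.

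The missing idea is an explicit bounded-cancellation estimate. Write $u=X^{-1}A_2A_1X$ and $v=Y^{-1}A_1A_2Y$, where $\overline{A_1A_2}$ and $\overline{A_2A_1}$ are cyclic permutations of the single word $\nf([u])$. Its uniqueness, from \cite{WZZ25}, makes your annular-diagram step unnecessary. Then prove $2|X|+|A|-|u|\le 8g$ and likewise for $Y$ and $v$. These give $|X|+|Y|\le n-|A|+8g$, so the conjugator $Y^{-1}A_1X$, with $|A_1|\le |A|-1$, has length at most $n+8g-1$.

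This estimate is the real content, and it fails for a careless $X$. For example, $A=\overline{b_1\cdots b_{2g-1}}$ and $X=\overline{b_{2g}(b_{2g+1}\cdots b_{4g-1})^t}$ are geodesic and $\overline{X^{-1}AX}$ is freely reduced, yet $X^{-1}AX=b_{2g-1}\cdots b_1$, so the cancellation $2+2t(2g-1)$ is unbounded. The paper therefore first modifies $X$ and the rotation of $A$ to remove $(4g-1)$-$\llfr$s (Lemmas~\ref{lem 4g-1-reduction 2} and \ref{lem for special case to cancel 4g-1}). It then shows that each stage $\overline{X^{-1}A'X}\to\overline{\nf(X^{-1}A')X}\to\nf(u)$ loses at most $4g$ (Lemma~\ref{lem observation 2 of tables}). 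Separate arguments handle an intermediate word that is not freely reduced or acquires a $(4g-1)$-$\llfr$ (Lemmas~\ref{lem freely reduced}, \ref{lem observations only case of 4g-1} and \ref{lem used in final}). Your count of ``four half-relator crossings'' is read off from the target constant rather than derived.
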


The paper is organized as follows. In Section \ref{sect 2}, we introduce some basic definitions and notations. In Section \ref{sect 3} and Section \ref{sect 4}, we establish several lemmas on conjugate reductions. Finally, in Section \ref{sect 5}, we give the proof of our main result.

\section{Preliminaries}\label{sect 2}

In this paper, we mainly study the conjugator length function of surface groups, based on the research of reductions in surface groups of a given order \cite{WZZ25}. We first introduce some definitions and notations used therein (see \cite[Section 2]{WZZ25} for more details). Throughout this paper, unless otherwise stated, let $G$ be a surface group with symmetric presentation (\ref{symmetric presentation}):
$$G=\pi_1(\Sigma_g)=  \left\langle c_1,\dots , c_{2g}\mid c_1\cdots c_{2g}c_1^{-1}\cdots c_{2g}^{-1} \right\rangle, \quad g\geq 2.$$
\subsection{Word length}
A \emph{word} in the alphabet $\gs^\pm:=\gs\cup\gs^{-1}=\{c_i^{\varepsilon}\mid 1\leq i\leq 2g, ~ \epsilon=\pm 1\}$ means a finite sequence $\overline{x_1 x_2\cdots x_n}$ with $x_i\in\gs^\pm,~i=1,\dots,n$. Here we use the overlined one to distinguish a word and a group element, i.e., $\overline{x_1\cdots x_n}$ is a word and $x_1\cdots x_n$ is a group element. Especially, we denote the \emph{empty word} as $1$. We denote the \emph{word set} in the alphabet $\gs^\pm$ as $\W(\gs)$. Furthermore, we write $\overline{A}\subset \overline{B}$ and $a\in\overline{A}$ if the word $\overline{A}$ is a subword of $\overline{B}$ and the letter $a$ is a letter appearing in $\overline{A}$, respectively.
\begin{rem}\label{rem word representation}
For a better explanation, in a word, we always use lowercase letters to represent letters in $\gs^\pm$ and use uppercase letters to represent subwords. For example, in the word $\overline{xyWa_1V_1z^{-1}}$, we have $x,y,a_1,z^{-1}\in\gs^\pm$ and $\overline{W},\overline{V}\in\W(\gs)$.  
\end{rem}
For a word $\overline{W}=\overline{x_1\cdots x_n}\in\W(\gs)$, we denote the length of $\overline{W}$ by $|\overline{W}|_\gs=n$. The (word) length of an element $u\in G$ with respect to the generating set $\gs$ is defined as 
$$|u|_\gs:=\min\{n\mid u=x_1\cdots x_n\in G,~x_i\in\gs^\pm\}.$$
When the generating set is clear, we always omit the subscript $\cdot_\gs$.

\subsection{Length-lexicographical order and normal form}
If we give the following order of the generators
\begin{equation}\label{order of generators}
    c_{2g}^{-1}\succ \cdots \succ c_2^{-1}\succ c_1^{-1}\succ c_1\succ c_2\cdots\succ  c_{2g},
\end{equation}
there will be a length-lexicographical order "$\succ$" on the \emph{word set} $\W(\gs)$ of $G$: for any $\overline{X},\overline{Y}\in \W(\gs)$,
\begin{equation*}
\begin{array}{ccccl}
 \overline{X} &\succ& \overline{Y} &\Longleftrightarrow&\left\{\begin{array}{ll}|\overline{X}|>|\overline{Y}|\qquad\mathrm{or}  \\ |\overline{X}|=|\overline{Y}| ~\mathrm{with}~ 
 \overline{X}=\overline{AxB}, ~\overline{Y}=\overline{AyC}
~\mathrm{for}~  x \succ y\in \gs^{\pm}\end{array}\right..\end{array}
\end{equation*}
Then, for an element $x\in G$, there is a unique \emph{normal form} $\nf(x)$ of $x$, which is the word with the minimal order that represents $x$. 

For a word $\overline{X}=\overline{x_1\cdots x_n}\in\W(\gs)$, we say it is \emph{irreducible} if it is a normal form; say it is \emph{freely reduced} if $x_i\neq x_{i+1}^{-1}$ for every $1\leq i\leq n-1$. Furthermore, we say it is \emph{cyclically irreducible} (resp. \emph{cyclically freely reduced}) if its every cyclic permutation (i.e. $\overline{x_i x_{i+1}\cdots x_n x_1x_2\cdots x_{i-1}},~1\leq i\leq n$) is irreducible  (resp. \emph{freely reduced}).

For any $x,y\in G$, we write $x\sim y$ if $x$ and $y$ are conjugate, and denote the conjugacy class of $x$ by $[x]$. Let $\nf([x])$ denote the \emph{normal form of the conjugacy class} $[x]$, defined as the unique cyclically irreducible word of the minimal order that represents an element in $[x]$.  

\subsection{$\llfr$, $S_{(i)}$ and reducing-subword pair}
\begin{defn}[$\llfr$]
Denote $\overline{R}:=\overline{c_1c_2\cdots c_{2g}c_1^{-1}c_2^{-1}\cdots c_{2g}^{-1}}$ and denote the word set of cyclic permutation of $\overline{R}$ and $\overline{R^{-1}}$ as
$$\RR:=\{\overline{b_{i}\cdots b_{4g}b_1\cdots b_{i-1}}\mid \overline{b_1\cdots b_{4g}}=\overline{R}\mbox{ or }\overline{R^{-1}},~1\leq i\leq 4g\}.$$
We say a word is a $k$-\emph{fractional relator} if it is equal to $\overline{b_1\cdots b_{k}}$ for some $\overline{b_1\cdots b_{4g}}\in\RR$ with $2\leq k\leq 4g$. Moreover, if a $k$-fractional relator $\overline{b_1\cdots b_{k}}$ is a subword of $$\overline{W}=\overline{w_1\cdots w_{m}b_1\cdots b_{k}w'_1\cdots w'_n}\in\W(\gs),$$  such that neither $\overline{ w_{m}b_1\cdots b_{k}}$ nor $\overline{b_1\cdots b_{k}w'_1}$ is a $(k+1)$-fractional relator, then we call $\overline{b_1\cdots b_k}$ a $k$-\emph{locally longest fractional relator} (\emph{LLFR}) of $\overline{W}$. 
\end{defn}

Note that for any $\overline{b_1\cdots b_{4g}}\in \RR$,  the following identities hold, where all subscripts are modulo $4g$:
	\begin{enumerate}
		\item $b_{k}=b_{k\pm2g}^{-1}$ for any $k=1,2,\dots,4g$;
		\item $b_{k}b_{k+1}\cdots b_{k+2g-1}=b_{k+2g-1}\cdots b_{k+1}b_{k}$ for any $k=1,2,\dots,4g$.
    \end{enumerate}

\begin{defn}[Words of type $S_{(i)}$]
   Let 
	\begin{enumerate}
		\item $S_{(1)}=\{\overline{b_1b_1^{-1}}-1\mid \overline{b_1\cdots b_{4g}}\in \RR\}$;
		\item $S_{(2,k)}=\{\overline{b_1\cdots b_{k}}-\overline{b^{-1}_{4g}\cdots b^{-1}_{k+1}}\mid \overline{b_1\cdots b_{4g}}\in \RR\}, ~2g+1\leq k\leq 4g$; 
		\item $S_{(3,t)}=\{\overline{b_1(b_2\cdots b_{2g})^tb_{2g+1}}-\overline{(b_{2g}\cdots b_{2})^t}\mid \overline{b_1\cdots b_{4g}}\in \RR\}, t\geq 2$;
		\item $S_{(4,t)}=\{\overline{b_1(b_2\cdots b_{2g})^t}-\overline{(b_{2g}\cdots b_{2})^tb_1}; ~\overline{(b_1\cdots b_{2g-1})^tb_{2g}}-\overline{b_{2g}(b_{2g-1}\cdots b_{1})^t}\mid \overline{b_1\cdots b_{4g}}\in \RR, b_1\succ b_{2g}\},t\geq 1$.
	\end{enumerate}
We will denote $S_{(i,k)}$ by $S_{(i)}$ ($i=2,3,4$) if there is no need to emphasize the second subscript $k$. Note that every polynomial in $S_{(i)}$ has a uniform form "$\overline{A}-\overline{B}$" for some $\overline{A},\overline{B}\in\W(\gs)$, and we call this $\overline{A}$ the \emph{leading word}.

We say a word $\overline{V}$ is \emph{of type} $S_{(i)}$ if it is the leading word of a polynomial in the set $S_{(i)}$. We can interpret this $S$-set (i.e. $\cup S_{(i)}$) as a rewriting system to define $S_{(i)}$ reducing $\overline{W_1}=\overline{CAD}$ to $\overline{W_2}=\overline{CBD}$ if $(\overline{A}-\overline{B})\in S_{(i)}$. We denote this process as
$$\overline{W_1}\xrightarrow{S_{(i)}}\overline{W_2}.$$
We say a word $\overline{W}$ is \emph{$S$-reduced} if $\overline{W}$ does not contain a subword equal to a leading word in $S$-set, and say a word is cyclically $S$-reduced if all its cyclic permutation are $S$-reduced.
\end{defn}

According to \cite[Lemma 2.8]{WZZ25}, we have the following lemma.
\begin{lem}
    $S$-reduced is equivalent to irreducible. Therefore, cyclically $S$-reduced is equivalent to cyclically irreducible.
\end{lem}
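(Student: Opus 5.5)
The plan is to show that the $S$-set, read as a rewriting system on $\W(\gs)$, is compatible with the length-lexicographical order $\succ$ and is complete. Once that is established, the two implications of the lemma follow formally, and the cyclic version follows by applying the lemma to every cyclic permutation.

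First, I would check compatibility. For each polynomial $\overline{A}-\overline{B}$ in $S_{(1)},\dots,S_{(4)}$ I would verify that $A=B$ in $G$ and that $\overline{A}\succ\overline{B}$.
\begin{itemize}
\item In $S_{(1)}$ and $S_{(2,k)}$ the length drops: for $S_{(2,k)}$, $k\ge 2g+1>4g-k$. Equality in $G$ comes from the relator, since $\overline{b_1\cdots b_{4g}}\in\RR$.
\item In $S_{(3,t)}$ the length drops by $2$. Equality follows from identity (2) applied $t$ times together with $b_1=b_{2g+1}^{-1}$ from identity (1).
\item In $S_{(4,t)}$ the lengths are equal. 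Equality is again identity (2), and the hypothesis $b_1\succ b_{2g}$ makes the leading word lexicographically larger, because the two words differ first in the first letter.
\end{itemize}
Since $\succ$ is a well-order on $\W(\gs)$ compatible with concatenation, rewriting terminates. The implication "irreducible $\Rightarrow$ $S$-reduced" is then immediate. If the normal form $\nf(x)$ contained a leading word $\overline{A}$, replacing it by $\overline{B}$ would give a word representing $x$ that is strictly $\prec\nf(x)$, contradicting minimality.

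The converse, "$S$-reduced $\Rightarrow$ irreducible", is the substantive part. I would prove it by showing directly that any word $\overline{W}$ which is not the normal form of its element contains a leading word of $S$; this amounts to confluence. Let $\overline{W}$ be $S$-reduced, hence freely reduced by $S_{(1)}$, and suppose $\overline{W}\ne\nf(W)$. Then the closed word $\overline{W}\,\nf(W)^{-1}$ bounds a reduced van Kampen diagram over the one-relator presentation. For $g\ge 2$ this presentation satisfies $C'(1/6)$, indeed $C(4g)$--$T(4)$ small cancellation. By Greendlinger's lemma, or the analysis of LLFR's along the boundary, $\overline{W}$ must then contain one of two things:
\begin{itemize}
\item a fractional relator of length $\ge 2g+1$, which is a leading word of $S_{(2)}$;
\item a chain of consecutive LLFR's of length exactly $2g$ overlapping in single letters, i.e.\ a subword of the shape $\overline{b_1(b_2\cdots b_{2g})^tb_{2g+1}}$ or $\overline{b_1(b_2\cdots b_{2g})^t}$, which is a leading word of $S_{(3)}$ or $S_{(4)}$.
\end{itemize}
In the equal-length case, comparing $\overline{W}$ letter by letter with the smaller word $\nf(W)$ isolates the first differing position. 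The diagram region there forces an $S_{(4)}$ pattern in which the orientation satisfies $b_1\succ b_{2g}$.

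I expect the main obstacle to be this last step: the shortest-length but lexicographically non-minimal words. Length reduction alone is classical Dehn-algorithm territory. Lexicographic minimality, however, requires a careful case analysis of how half-relator chains can be flipped by identity (2), and of whether the chain's direction matches the order (\ref{order of generators}). My first attempt would be an alternative to the diagram: check all critical pairs (overlaps of leading words) of the $S$-set, in the Gröbner--Shirshov style, and show that each resolves. By Newman's lemma this gives confluence, so the unique $S$-reduced word in each class is $\nf(x)$. Finally, since a word is cyclically $S$-reduced exactly when every cyclic permutation is $S$-reduced, the first statement applied to each permutation yields the cyclic equivalence.
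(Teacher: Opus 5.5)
The paper gives no argument for this lemma; it quotes it from \cite[Lemma 2.8]{WZZ25}. The Gr\"obner--Shirshov vocabulary there (polynomials $\overline{A}-\overline{B}$ with leading words) indicates that the underlying fact is that $S$ is a Gr\"obner--Shirshov basis, i.e.\ a complete rewriting system, for the presentation with respect to $\succ$. Your Route 2 (termination from the monomial order, local confluence from critical pairs, Newman's lemma) is that same framework. The parts you actually carry out are correct:
\begin{itemize}
\item every rule in $S_{(1)}$--$S_{(4)}$ holds in $G$ and strictly decreases in $\succ$;
\item irreducible $\Rightarrow$ $S$-reduced follows from minimality of $\nf(x)$;
\item the cyclic statement follows by applying the lemma to each cyclic permutation.
\end{itemize}
You should also say explicitly that the congruence on $\W(\gs)$ generated by $S$ is exactly equality in $G$. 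This holds because $S_{(1)}$ and $S_{(2,4g)}$ (whose right-hand side is the empty word) contain the defining relations. That fact is what lets you identify the unique $S$-reduced word of a class with $\nf(x)$.

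The genuine gap is the other direction, $S$-reduced $\Rightarrow$ irreducible. It is equivalent to completeness of $S$, which is the whole content of the lemma, and you only announce it.

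For Route 2, you would have to resolve every overlap and inclusion ambiguity among the leading words of $S_{(1)},S_{(2,k)},S_{(3,t)},S_{(4,t)}$ over all of $\RR$, uniformly in $k$ and $t$. For instance, the $S_{(4,t)}$ leading word $\overline{b_1(b_2\cdots b_{2g})^t}$ sits inside the $S_{(3,t)}$ leading word. Its last letter $b_{2g}$ also begins the $S_{(2,2g+1)}$ leading word $\overline{b_{2g}b_{2g+1}\cdots b_{4g}}$ and $S_{(4)}$ leading words from other elements of $\RR$. None of these is checked, so nothing yet excludes an $S$-reduced word different from $\nf(x)$.

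Route 1 does not work as written. First, the maximal fractional relators inside $\overline{b_1(b_2\cdots b_{2g})^tb_{2g+1}}$ are not length-$2g$ pieces overlapping in one letter. They are abutting pieces of lengths $2g,2g-1,\dots,2g-1,2g$, because $\overline{b_{2g}b_2}$ is not a fractional relator. Second, and more seriously, Greendlinger's lemma applied to $\overline{W}\,\nf(W)^{-1}$ only yields cells with long outer arcs. Those arcs may straddle the two points where $\overline{W}$ meets $\nf(W)^{-1}$, as the end cells of a ladder do, so the lemma does not place a leading word inside $\overline{W}$. To make this route work you need the full description of geodesic bigons in the $\{4g,4\}$ tiling as ladders of relator cells. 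That is required both to show $S$-reduced words are geodesic and to extract an $S_{(4)}$ leading word with $b_1\succ b_{2g}$ at the first divergence of $\overline{W}$ and $\nf(W)$.

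As it stands you have the easy half plus a plan. The hard half must be supplied, or cited as the paper does.
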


Furthermore, by \cite[Lemma 3.2]{WZZ25}, we have:

\begin{lem}\label{claim 2}
Let $\overline{b_1\cdots b_{4g}}\in \RR$. If $\overline{X}=\overline{b_{2g+1}(b_{2g}\cdots b_2)^{t}X_1}$ ($t\geq 0$) is irreducible, then $\overline{Y}=\overline{b_{2g+1}X_1}$ is also irreducible.
\end{lem}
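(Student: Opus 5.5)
By the lemma above, $S$-reduced is the same as irreducible, so I would prove the contrapositive on the level of the rewriting system. Suppose $\overline{Y}=\overline{b_{2g+1}X_1}$ contains a leading word $\overline{V}$ of some type $S_{(i)}$. The aim is to find a leading word inside $\overline{X}$, which contradicts the irreducibility of $\overline{X}$.

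\emph{Reduction to a prefix.} Since $\overline{X_1}$ is a subword of the irreducible word $\overline{X}$, it is $S$-reduced. Hence $\overline{V}$ must start at the first letter, so $\overline{V}=\overline{b_{2g+1}V'}$ with $\overline{V'}$ a prefix of $\overline{X_1}$. If $t=0$ there is nothing to prove, so assume $t\ge 1$.

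\emph{The structure of the prefix of $\overline{X}$.} Using $b_k=b_{k\pm 2g}^{-1}$, the word $\overline{b_{2g+1}b_{2g}\cdots b_2b_1b_{4g}\cdots b_{2g+2}}$ is a cyclic permutation of $\overline{R^{\pm1}}$, so it lies in $\RR$. Rename its letters $b'_1=b_{2g+1}$, $b'_2=b_{2g},\dots,b'_{2g}=b_2$, $b'_{2g+1}=b_1,\dots$. Then
$$\overline{X}=\overline{b'_1(b'_2\cdots b'_{2g})^tX_1},$$
which is exactly the periodic shape appearing in the words of type $S_{(3)}$ and $S_{(4)}$. The key observation is that when $\overline{V'}$ makes $\overline{b_{2g+1}V'}$ a leading word, the same $\overline{V'}$ placed after $\overline{b'_1(b'_2\cdots b'_{2g})^t}$ either extends the period or closes it off. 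Either way it produces a leading word of $\overline{X}$.

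\emph{Case analysis over the type of $\overline{V}$.}
\begin{enumerate}
\item Type $S_{(1)}$. Then $\overline{X_1}$ begins with $b_{2g+1}^{-1}=b_1=b'_{2g+1}$. So $\overline{X}$ contains $\overline{b'_1\cdots b'_{2g+1}}$, a $(2g+1)$-fractional relator, which is a leading word of type $S_{(2,2g+1)}$.
\item Type $S_{(2,k)}$ with $k\ge 2g+1$, or type $S_{(3)}$, or type $S_{(4)}$. Here $\overline{V}$ is built along a relator starting at $b_{2g+1}$. I split according to the second letter of $\overline{V}$.
\begin{itemize}
\item If the second letter is $b_{2g+2}=b_2^{-1}$, then in $\overline{X}$ the last $b_2$ of the power is followed by $b_2^{-1}$. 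This is a free cancellation, so $\overline{X}$ contains a word of type $S_{(1)}$.
\item If $\overline{V}$ runs along the other relator $b_{2g+1}b_{2g}b_{2g-1}\cdots$ (that is, along the $b'$-relator), then $\overline{V'}$ begins with $\overline{b'_2\cdots}$. Concatenating gives $\overline{b'_1(b'_2\cdots b'_{2g})^{t+s}\cdots}$ for a suitable $s\ge 0$.
\begin{itemize}
\item For $\overline{V}$ of type $S_{(2,k)}$, $\overline{V'}$ reaches $b'_{2g+1}$. Then $\overline{X}$ contains $\overline{b'_1(b'_2\cdots b'_{2g})^{t+1}b'_{2g+1}}$, of type $S_{(3,t+1)}$.
\item For $\overline{V}$ of type $S_{(3,s)}$, we likewise get type $S_{(3,t+s)}$.
\item For $\overline{V}$ of type $S_{(4)}$, we get a longer word of type $S_{(4)}$ with the same first two letters. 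Hence the same order condition $b'_1\succ b'_{2g}$ still holds.
\end{itemize}
\end{itemize}
\end{enumerate}
In every case, $\overline{X}$ fails to be $S$-reduced, which is the desired contradiction.

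\emph{Expected obstacle.} The delicate part is the $S_{(4)}$ case. There are two shapes of leading word, and the order condition $b_1\succ b_{2g}$ must be checked against the relabelled relator. One must also handle the situation where $\overline{V}$ uses a different cyclic permutation, for instance the second shape $\overline{(b_1\cdots b_{2g-1})^tb_{2g}}$, whose first letter $b_{2g+1}$ sits at a different position in the period. There I would compare the first two letters of $\overline{V}$ with $\overline{b'_1b'_2}$. Two consecutive letters of a freely reduced word determine the relator and its direction, up to the $R$/$R^{-1}$ symmetry. So $\overline{V}$ is forced either to be aligned with the $b'$-period, or to produce a free cancellation or a fractional relator of length at least $2g+1$ within $\overline{X}$.
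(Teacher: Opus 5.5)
Your strategy matches the paper's. Because $\overline{X_1}$ is irreducible, any reduction of $\overline{b_{2g+1}X_1}$ must come from a leading word that is a prefix, and each possible prefix is then transplanted into $\overline{X}$. The paper reads the possible shapes of $\overline{X_1}$ off the second table of \cite[Lemma 3.2]{WZZ25}; you enumerate them by hand via the relabelled relator $\overline{b'_1\cdots b'_{4g}}$. Several of your steps are fine:
\begin{itemize}
\item the reduction to a prefix;
\item the free-cancellation argument when the second letter is $b_{2g+2}$;
\item the $S_{(2,k)}$ and $S_{(3,s)}$ cases in the $b'$-direction.
\end{itemize}
Two steps, however, are wrong as written.

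\textbf{The $S_{(1)}$ case.} Here $\overline{X}$ begins with $\overline{b'_1(b'_2\cdots b'_{2g})^t b'_{2g+1}}$. This contains $\overline{b'_1\cdots b'_{2g+1}}$ only when $t=1$. For $t\geq 2$ it is instead a leading word of type $S_{(3,t)}$. This is a slip with an immediate fix.

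\textbf{The $S_{(4)}$ case.} Your claim that concatenation yields ``a longer word of type $S_{(4)}$'' fails for the second shape. Take $\overline{V}=\overline{(b'_1\cdots b'_{2g-1})^{s}b'_{2g}}$ with $s\geq 2$. Then $\overline{X}$ begins with $\overline{b'_1(b'_2\cdots b'_{2g})^t\,b'_2\cdots b'_{2g-1}\,b'_1\cdots}$, so the period $\overline{b'_2\cdots b'_{2g}}$ is broken. The concatenation therefore produces none of the following:
\begin{itemize}
\item a longer $S_{(4)}$ word;
\item a free cancellation;
\item a $(2g+1)$-fractional relator.
\end{itemize}
So the resolution sketched in your last paragraph does not cover this case.

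What closes it is a fact you half-state. Both shapes of $S_{(4)}$ carry the condition $b'_1\succ b'_{2g}$ for the relator determined by their first two letters. That condition alone makes the prefix $\overline{b'_1(b'_2\cdots b'_{2g})^t}$ of $\overline{X}$ (with $t\geq 1$) a leading word of type $S_{(4,t)}$. Equivalently, irreducibility of $\overline{X}$ with $t\geq 1$ forces $b_{2g+1}\prec b_2$. This excludes every $S_{(4)}$ prefix of $\overline{Y}$ in the $b'$-direction at once, regardless of shape or exponent, and it is exactly how the paper disposes of its Case (iv).

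With these two corrections your case analysis is complete.
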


\begin{proof}
   By the second table in \cite[Lemma 3.2]{WZZ25}, if $\overline{b_{2g+1}X_1}$ is not irreducible, then
   \begin{equation*}
   \overline{X_1}=\left\{\begin{array}{ll}
     \overline{b_1\cdots}    &\mbox{Case (i)}\\
      \overline{b_{2g}\cdots b_{2}b_1\cdots } \mbox{ or }\overline{b_{2g+2}\cdots b_{4g}b_1\cdots }&\mbox{Case (ii)}\\
       \overline{(b_{2g}\cdots b_{2})^rb_1\cdots } \mbox{ or }\overline{(b_{2g+2}\cdots b_{4g})^rb_1\cdots }~(r\geq 2)&\mbox{Case (iii)}\\
      \overline{b_{2g}\cdots b_{2}}~(b_{2g+1}\succ b_{2}) \mbox{ or }\overline{b_{2g+2}\cdots b_{4g}}~(b_{2g+1}\succ b_{4g}) &\mbox{Case (iv)}
   \end{array}\right..   
   \end{equation*}
 For Case (i--iii), it leads to a contradiction that $\overline{b_{2g+1}(b_{2g}\cdots b_2)^{t}X_1}$ is not irreducible. For Case (iv), since $\overline{A}$ is irreducible, we have $b_{2g+1}\prec b_{2}$; or $\overline{A}=\overline{b_{2g+1}(b_{2g}\cdots b_2)^{t}b_{2g+2}\cdots }$ is not irreducible. In conclusion, $\overline{b_{2g+1}X_1}$ is irreducible. 
\end{proof}

\begin{defn}[Reducing-subword pair]
Let $\overline{W_1}, \overline{W_2}\in\W(\gs)$ be two irreducible words. The \emph{reducing-subword pair} of 
$(\overline{W_1},\overline{W_2})$ is defined as
$$\rs(\overline{W_1},\overline{W_2}):=\min\{(\overline{C_1},\overline{C_2})\mid \overline{W_1}=\overline{AC_1}, \overline{W_2}=\overline{C_2B}, ~\nf(W_1W_2)=\overline{AC'B}\},$$
where ``$\min$'' is in the sense of the length-lexicographical order $\prec$ on $\W(\gs)\times\W(\gs)$.  
\end{defn}

\subsection{Conjugator length function}
Finally, we give the formal definition of conjugator length function.
\begin{defn}[Conjugator length function]
    Given a group $G$ with a finite generating set, for any $u\sim v$ in $G$, define $\cl(u,v)$ as follows:
    $$\cl(u,v):=\min\{|w|\mid u=w^{-1}vw, ~w\in G\}.$$
    The \emph{conjugator length function} $\cl:\N\to\N$ is defined so that $\cl(n)$ is the least integer $N$ such that $\cl(u,v)\leq N$ for all conjugate elements $u$ and $v$ with length $|u|+|v|\leq n$.
\end{defn}

\section{Reductions between two irreducible words}\label{sect 3}
In this section, we will discuss reductions of words in the surface group $G=\pi_1(\Sigma_g)$ with the symmetric presentation (\ref{symmetric presentation}). 

\subsection{Reduction procedure}\label{subsect 3.1}
Given two irreducible words $\overline{U}=\overline{u_1\cdots u_m}$ and $\overline{V}=\overline{v_1\cdots v_n}$ with $\overline{UV}$ freely reduced, the reduction "$\overline{UV}\to \nf(UV)$" has finite types. (Refer to the tables in \cite[Appendix B]{WZZ25} for details.) Suppose that $\overline{UV}$ is reducible and the $\llfr$ of $\overline{UV}$ containing $\overline{u_mv_1}$ has length not equal to $(4g-1)$. Then, according to \cite[Section 5]{WZZ25}, we can always reduce $\overline{UV}$ to its normal form by at most three $S_{(i)}$-reductions (see Figure \ref{fig: reduction}):
\begin{figure}
    \centering
    \includegraphics[width=1\linewidth]{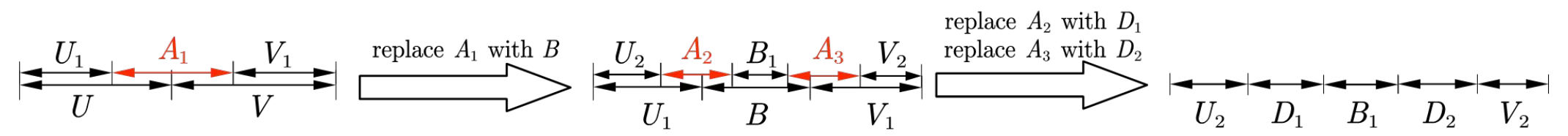}
    \caption{With the settings at the beginning of Subsection \ref{subsect 3.1}, we can always reduce $\overline{UV}$ to its normal form $\nf(UV)$ by at most three $S_{(i)}$-reductions. The processes "replace $\overline{A_2}$ with $\overline{D_1}$" and "replace $\overline{A_3}$ with $\overline{D_2}$" may not exist.}
    \label{fig: reduction}
\end{figure}

\begin{enumerate}
    \item[Step 1.] There must be a reducible subword $\overline{A_1}(\subset \overline{UV})$ of type $S_{(i_1)}$ at the junction of $\overline{UV}$, where $S_{(i_1)}=S_{(2,k)}(2g+1\leq k\leq 4g-2)$, $S_{(3,t_1)}$ or $S_{(4,t_2)}$ with $k,t_2$ maximal. Then, we have the following reduction:
    $$\overline{UV}\xrightarrow{S_{(i_1)}}\overline{U_1BV_1}.$$
    \item[Step 2.] There may exist a reducible subword $\overline{A_2}(\subset \overline{U_1B})$ of type $S_{(i_2)}$ at the junction of $\overline{U_1B}$, where $S_{(i_2)}=S_{(2,2g+1)},S_{(3,t_3)}$ or $S_{(4,t_4)}$ with $t_4$ maximal. Then, we have the following reduction:
    $$\overline{U_1BV_1}\xrightarrow{S_{(i_2)}}\overline{U_2D_1B'V_1}.$$
    \item[Step 3.] There may exist a reducible subword $\overline{A_3}(\subset \overline{B'V_1}\subset\overline{BV_1})$ of type $S_{(i_3)}$ at the junction of $\overline{BV_1}$, where $S_{(i_3)}=S_{(2,2g+1)},S_{(3,t_5)}$ or $S_{(4,t_6)}$ with $t_6$ maximal. Then, we have the following reduction:
$$\overline{U_2D_1B'V_1}\xrightarrow{S_{(i_3)}}\overline{U_2D_1B_1D_2V_1}.$$
\end{enumerate}

All the possible reduction processes of $\overline{UV}$ are listed in \cite[Appendix B, Tables 4--7]{WZZ25}. In these tables, we use the top-left label $*$ to indicate the presence of symmetric cases. At this point, we shall provide an explanation regarding the unlisted symmetric cases, and demonstrate how to derive its symmetric case from a given marked $*$ item. The reason for the occurrence of symmetric cases mainly stem from two aspects:

On one hand, in Step 1, if $S_{(i_1)}=S_{(3,t)}~(t\geq 2), S_{(4,t')}~(t'\geq 1)$, then there are some items (all the items in \cite[Table 6]{WZZ25}), which have symmetric case different from themselves. For example, Item 1 of \cite[Table 6]{WZZ25} has a symmetric case as follows: for some $\overline{b_1\cdots b_{4g}}\in \RR$ and $2\leq r\leq 2g$,
\begin{eqnarray*}
    \overline{C_1C_2}&=& \overline{\underbrace{b_1(b_2\cdots b_{2g})^{t-1}b_2\cdots b_r}_{C_1}\underbrace{b_{r+1}\cdots b_{2g+1}}_{C_2}}\\
    &\xrightarrow{S_{(3,t)}}&\overline{(b_{2g}\cdots b_2)^t}.
\end{eqnarray*}

On the other hand, the different choices of $S_{(i_2)}$ and $S_{(i_3)}$ in Step 2 and Step 3 may yield a symmetric case regardless of whether the Step 1 does or not. For example, Items 2, 3, 4 and 6 in \cite[Table 6]{WZZ25} and all the items with label $*$ in \cite[Table 7]{WZZ25} have symmetric cases, and this is precisely due to this reason. More specifically, we have the following two examples:
\begin{enumerate}
    \item Item 2 in \cite[Table 6]{WZZ25} has a symmetric case as follows: for some $\overline{b_1\cdots b_{4g}}\in\RR$ and $t\geq 2$,
\begin{eqnarray*}
  \overline{C_1C_2}&=& \overline{\underbrace{b_{4g}(b_1\cdots b_{2g-1})^{t-1}b_1\cdots b_r}_{C_1}\underbrace{b_{r+1}\cdots b_{2g}b_2\cdots b_{2g}b_{2g+1}}_{C_2}}\\
    &\xrightarrow{S_{(3,t)}}\xrightarrow{S_{(2,2g+1)}}&\overline{(b_{2g-1}\cdots b_1)^{t-1} b_{2g-1}\cdots b_{2}b_{2g}\cdots b_{2}},
\end{eqnarray*}
 where $S_{(i_1)}=S_{(3,t)}$ while $S_{(i_2)}$ is trivial and $S_{(i_3)}=S_{(2,2g+1)}$.   
    \item Item 4 in \cite[Table 7]{WZZ25} has a symmetric case as follows: for some $\overline{b_1\cdots b_{4g}}\in\RR$ and $2g+1\leq r+s\leq 4g-2$,
\begin{eqnarray*}
  \overline{C_1C_2}&=& \overline{\underbrace{b_1\cdots b_r}_{C_1}\underbrace{b_{r+1}\cdots b_{r+s}b_{r+s-2g+2}\cdots b_{r+s+1}}_{C_2}}\\
    &\xrightarrow{S_{(2,r+s)}}\xrightarrow{S_{(2,2g+1)}}&\overline{b_{2g}\cdots b_{r+s-2g+2}b_{r+s}\cdots b_{r+s-2g+2}},
\end{eqnarray*}
where $S_{(i_1)}=S_{(2,r+s)}$ while $S_{(i_2)}$ is trivial and $S_{(i_3)}=S_{(2,2g+1)}$.
\end{enumerate}

\subsection{Lemmas on reductions between two irreducible words}
By checking the tables in \cite[Tables 4--8]{WZZ25}, we can obtain the following lemmas.

\begin{lem}\label{lem observation 1 of tables}
    Let $\overline{U}=\overline{U_1U_2}$ and $\overline{V}=\overline{V_1V_2}$ be two irreducible words in $\W(\gs)$ with $\overline{UV}$ freely reduced and $\rs(\overline{U},\overline{V})=(\overline{U_2},\overline{V_1})$. If $\nf(U_2V_1)=\overline{X b_1b_2\cdots b_s}$ with $2\leq s\leq 2g$ for some $\overline{b_1\cdots b_{4g}}\in\RR$ and some $\overline{X}\in\W(\gs)$, then $\overline{V_2}\neq \overline{b_{s+1}\cdots}$.

    Symmetrically, if $\nf(U_2V_1)=\overline{b_s\cdots b_2 b_1X}$ for some $\overline{b_1\cdots b_{4g}}\in\RR$, then $\overline{U_1}\neq \overline{\cdots b_{s+1}}$.
\end{lem}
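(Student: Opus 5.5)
We argue by contradiction, combining the minimality built into the definition of $\rs$ with the irreducibility of $\nf(UV)$.

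\textbf{Step 1: set up the contradiction.} Write $\nf(UV)=\overline{U_1\,C'\,V_2}$ with $\overline{C'}=\nf(U_2V_1)=\overline{Xb_1\cdots b_s}$, where $2\le s\le 2g$. Suppose, contrary to the claim, that $\overline{V_2}=\overline{b_{s+1}\cdots}$.

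\textbf{Step 2: locate a fractional relator.} Then $\nf(UV)$ contains the subword $\overline{b_1\cdots b_sb_{s+1}}$. This is an $(s+1)$-fractional relator of length at most $2g+1$.

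\textbf{Step 3: the case $s=2g$.} Here the subword $\overline{b_1\cdots b_{2g+1}}$ is the leading word of the $S_{(2,2g+1)}$-polynomial. This contradicts Lemma 2.? ($S$-reduced $\Leftrightarrow$ irreducible), since $\nf(UV)$ is irreducible.

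\textbf{Step 4: the case $s<2g$.} Irreducibility alone does not forbid the subword, so we use minimality of $\rs(\overline U,\overline V)=(\overline{U_2},\overline{V_1})$. The letter $b_{s+1}$ is the first letter of $\overline{V_2}$, so it is untouched by the reduction of $\overline{U_2V_1}$. The tail $b_1\cdots b_s$ of $\nf(U_2V_1)$ is produced by the last $S$-reduction of the reduction procedure in Subsection 3.1, carried out at the junction with $\overline{V_1}$. In each row of Tables 4--8 of \cite{WZZ25}, this right-hand tail is a prescribed piece of the rewritten relator: typically $b_{2g}\cdots b_2$, a reversed block coming from $S_{(2,k)}$, or a power block from $S_{(3,t)}$ or $S_{(4,t)}$.

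For each table entry we check the following. If the next letter of $\overline V$ continued that tail as $b_{s+1}$, then one of two things happens.
\begin{itemize}
\item[(a)] The original $\overline{V}$ already contained a longer fractional relator extending the LLFR across the junction. Then a longer LLFR, or a larger $k$ or $t$, would have been chosen in Step 1 or Step 3 of the procedure. This contradicts the maximality of $k,t$ there, and hence the minimality of $(\overline{U_2},\overline{V_1})$, because absorbing $b_{s+1}$ into $\overline{V_1}$ changes the reduction.
\item[(b)] $\overline{V}$ itself is not irreducible. Here I expect to use Lemma \ref{claim 2}, to strip powers $(b_{2g}\cdots b_2)^t$ and reduce to a short irreducibility check.
\end{itemize}

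\textbf{Step 5: the symmetric statement.} The second assertion, about $\overline{U_1}\neq\overline{\cdots b_{s+1}}$, follows by the left–right symmetric argument. One can pass to inverses: replace $(\overline U,\overline V)$ by $(\overline{V^{-1}},\overline{U^{-1}})$ and the relator word by its inverse. One must then track that the order $\succ$ is not symmetric, so the starred symmetric entries of the tables must be checked separately.

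\textbf{Main obstacle.} The hardest part is Step 4: the case-by-case verification over all table entries, including the unlisted symmetric ones described in Subsection 3.1. I would organize it by the type of the final reduction ($S_{(2,k)}$, $S_{(3,t)}$ or $S_{(4,t)}$). Within each type, the tail of $\nf(U_2V_1)$ is a suffix of a fixed relator pattern, so the contradiction in each case comes from one of two extension arguments: maximality of $k$ or $t$, or irreducibility of $\overline V$ via Lemma \ref{claim 2}.
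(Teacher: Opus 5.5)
Your plan is the paper's proof. Both go through a row-by-row inspection of \cite[Tables 4--8]{WZZ25}, showing that the letter $b_{s+1}$ forced by the tail of $\nf(U_2V_1)$ would make $\overline{V}=\overline{V_1b_{s+1}\cdots}$ reducible; in the paper's worked Table 4 row, $\overline{V_1}$ ends in $b'_{2g+1}$ and $b_{s+1}=b'_1=(b'_{2g+1})^{-1}$. Your separate treatment of $s=2g$, via the $S_{(2,2g+1)}$ leading word $\overline{b_1\cdots b_{2g+1}}$ inside the irreducible word $\nf(UV)$, is a correct shortcut that the paper does not isolate.

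One wording fix concerns your branch (a). Putting $b_{s+1}$ into $\overline{V_1}$ gives a larger pair, so it cannot contradict the \emph{minimality} of $\rs$; at most it contradicts that $\overline{U_1}\,\nf(U_2V_1)\,\overline{V_2}$ is the normal form. The paper's check runs entirely through the irreducibility of $\overline{V}$, which is your branch (b).
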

\begin{proof}
For instance, in \cite[Table 4]{WZZ25},  we have  $$\rs(\overline{U},\overline{V})=(\overline{U_2},\overline{V_1})=(\overline{b_1'(b_2'\cdots b_{2g}')^{t_1}},\overline{(b_2'\cdots b_{2g}')^{t_2}b_{2g+1}'}),$$ $$\nf(U_2V_1)=\overline{\underbrace{(b_{2g}'\cdots b_2')^{t_1+t_2-1}}_X b_{2g}'\cdots b_2'}$$ for some $\overline{b_1'\cdots b_{4g}'}\in\RR$. If $\overline{V_2}= \overline{b_{1}'\cdots}$, then $$\overline{V}=\overline{\underbrace{(b_2'\cdots b_{2g}')^{t_2}b_{2g+1}'}_{V_1}\underbrace{b_1'\cdots}_{V_2}}$$ is reducible, contradicting the hypothesis that $\overline{V}$ is irreducible. All other items can be verified by the same way.
\end{proof}

\begin{lem}\label{lem observation of LLFR of reduction}
    Let $\overline{X}=\overline{x_1\cdots x_k b_1\cdots b_r}~(\overline{b_1\cdots b_{4g}}\in\RR, 2\leq r\leq 2g)$ and $\overline{Y}=\overline{y_1\cdots y_n}$ be two irreducible words with $\overline{XY}$ freely reduced but not irreducible. Denote $\rs(\overline{X},\overline{Y}):=(\overline{X_0},\overline{Y_0})$. If $x_k\neq b_{4g}$, then one of the following three items holds:
    \begin{enumerate}
        \item[(a)] $(\overline{X_0}, \overline{Y_0})=(\overline{b_1\cdots b_{2g}},\overline{(b_2\cdots b_{2g})^tb_{2g+1}})(t\geq 1)$ and $r=2g$, which corresponds to \cite[Table 4]{WZZ25}.
        \item[(b)] $(\overline{X_0},\overline{Y_0})=(\overline{\cdots b_1\cdots b_r}, \overline{b_{r+1}\cdots b_{r+s}\cdots })$, which corresponds to \cite[Tables 5--8]{WZZ25}.
        \item[(c)] \begin{equation*}
            (\overline{X_0},\overline{Y_0})=\left\{\begin{array}{l}
                (\overline{b_r},\overline{b_{r-1}\cdots b_{r-2g}}),\\
                (\overline{b_r},\overline{(b_{r-1}\cdots b_{r-2g+1})^tb_{r-2g}})~(t\geq 2),\\
                (\overline{b_r},\overline{(b_{r-1}\cdots b_{r-2g+1})^t})(t\geq 1, b_{r}\succ b_{r-2g+1}),
                \end{array}\right.
        \end{equation*} with the subscript $\cdot_i$ of $b_i$ modulo $4g$, which corresponds to items (2--4) of the second table in \cite[Lemma 3.2]{WZZ25}.
    \end{enumerate}
\end{lem}

\begin{proof}
Note all reduction cases are classified and listed in \cite[Tables 4--8]{WZZ25} according to the length of $\llfr$ at the junction. We adopt the same classification strategy here to present all the reducing-subword pairs $(\overline{X_0},\overline{Y_0}):=\rs(\overline{X}, \overline{Y})$. 

\textbf{Case (1).} There is no $\llfr$ at the junction of $\overline{X}$ and $\overline{Y}$, which corresponds to \cite[Table 4]{WZZ25}. Then
$$(\overline{X_0},\overline{Y_0})=(\overline{b_1(b_2\cdots b_{2g})^{t_1}},\overline{(b_2\cdots b_{2g})^{t_2}b_{2g+1}})$$
for some $t_1,t_2\geq 1$. Combining the hypothesis that $\overline{X}=\overline{x_1\cdots x_k b_1\cdots b_r}$, we obtain $t_1=1$ and hence conclusion (a) holds.

\textbf{Case (2).} There is an $\llfr$, denoted by $\overline{Z}$, at the junction of $\overline{X}$ and $\overline{Y}$, which corresponds to \cite[Tables 5--8]{WZZ25}. Suppose 
\begin{equation*}
   \overline{Z}=\overline{\underbrace{\cdots b_{r}}_{X_1}\underbrace{y_1\cdots y_s}_{Y_1}}, 
\end{equation*}
where $\overline{X}=\overline{\cdots X_1}$ and $\overline{Y}=\overline{Y_1\cdots}$. Then we have two subcases as follows:

\textbf{Subcase (2.1).} $\overline{Z}\subset \overline{b_{i+1}\cdots b_{i+4g}}$, where $\overline{b_{i+1}\cdots b_{i+4g}}$ denotes a cyclic permutation of $\overline{b_1\cdots b_{4g}}$ with the subscripts of $b_{j}$'s $\pmod{4g}$.
Then we have $\overline{y_1\cdots y_s}=\overline{b_{r+1}\cdots b_{r+s}}$ and $y_{s+1}\neq b_{r+s+1}$. 
Since $x_k\neq b_{4g}$, we have $$\overline{Z}=\overline{\underbrace{b_1\cdots b_r}_{X_1}\underbrace{b_{r+1}\cdots b_{r+s}}_{Y_1}}.$$
It follows that conclusion (b) holds. Moreover, according to the length $|\overline{Z}|=r+s$, we have the following correspondences:
\begin{enumerate}
    \item The case of $r+s=2g-1$ corresponds to \cite[Table 5]{WZZ25}.
    \item The case of $r+s=2g$ corresponds to \cite[Table 6]{WZZ25}.
    \item The case of $2g+1\leq r+s\leq 4g-2$ corresponds to \cite[Table 7]{WZZ25}.
    \item The case of $r+s=4g-1$ corresponds to \cite[Table 8]{WZZ25}.
\end{enumerate}

\textbf{Subcase (2.2).} $\overline{Z}\subset \overline{b_{i+4g}\cdots b_{i+1}}$, where $\overline{b_{i+4g}\cdots b_{i+1}}$ denotes a cyclic permutation of $\overline{b_{4g}\cdots b_{1}}$ with the subscripts of $b_{j}$'s $\pmod{4g}$.
Then combining the hypothesis that  $\overline{X}=\overline{x_1\cdots x_kb_1\cdots b_r}$ with $r\geq 2$, we have 
\begin{equation}\label{eq. Z}
 \overline{Z}=\overline{\underbrace{b_r}_{X_1}\underbrace{b_{r-1}\cdots b_{r-s}}_{Y_1}}.   
\end{equation}
By checking all the reducing-subword pairs $(\overline{C_1},\overline{C_2}):=(\overline{d_1\cdots d_{s_1}},\overline{h_1\cdots h_{s_2}})$ in \cite[Tables 5--8]{WZZ25}, we have the following claim: 

\begin{claim}\label{claim 1}
If $\overline{d_{s_1}h_1}\subset \overline{b_{1}\cdots b_{4g}}$ for some $\overline{b_{1}\cdots b_{4g}}\in\RR$, then $\overline{C_1C_2}$ contains no subword $\overline{b_{i+1}b_{i}}$ with subscripts modulo $4g$. Equivalently,
if $\overline{d_{s_1}h_1}\subset \overline{b_{4g}'\cdots b_{1}'}$ for some $\overline{b_{4g}'\cdots b_{1}'}\in\RR$, then $\overline{C_1C_2}$ contains no subword $\overline{b_{i}'b_{i+1}'}$ with subscripts modulo $4g$.
\end{claim}

Now combining $\overline{X}=\overline{x_1\cdots x_k b_1\cdots b_r}$ with Eq. (\ref{eq. Z}) and $\overline{b_{r}b_{r-1}}\subset\overline{b_{4g}\cdots b_1}$, and applying Claim \ref{claim 1} to the reducing-subword pair $\rs(\overline{X},\overline{Y})=(\overline{X_0},\overline{Y_0})$, we have $\overline{X_0}=\overline{b_{r}}$. It then suffices to consider  $\rs(\overline{X_0},\overline{Y})=(\overline{b_r},\overline{Y})$, which corresponds exactly to \cite[Lemma 3.2]{WZZ25}. Thus conclusion (c) holds. 
\end{proof}


\begin{lem}\label{lem observation 2 of tables}
    Let $\overline{U}$ and $\overline{V}$ be two irreducible words in $\W(\gs)$ with $\overline{UV}$ freely reduced. If $\overline{UV}$ has no $(4g-1)$-$\llfr$, then
    $$|U|+|V|-|UV|\leq 4g.$$
\end{lem}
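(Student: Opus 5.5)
We plan to prove the bound by reducing it to a statement about the reducing-subword pair and then checking the finitely many shapes of reductions recorded in \cite[Tables 4--8]{WZZ25}. If $\overline{UV}$ is irreducible there is nothing to prove, since $|UV|=|U|+|V|$. Otherwise let $(\overline{U_2},\overline{V_1}):=\rs(\overline{U},\overline{V})$ with $\overline{U}=\overline{U_1U_2}$ and $\overline{V}=\overline{V_1V_2}$. By the definition of the reducing-subword pair, $\nf(UV)=\overline{U_1C'V_2}$ with $\overline{C'}=\nf(U_2V_1)$. Hence
$$|U|+|V|-|UV| = |\overline{U_2}|+|\overline{V_1}|-|\overline{C'}|,$$
and the problem becomes purely local: we must bound the length lost in $\overline{U_2V_1}\to\nf(U_2V_1)$ by $4g$.

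Next we use the hypothesis that there is no $(4g-1)$-$\llfr$. This places us in the setting of Subsection \ref{subsect 3.1}, so $\overline{U_2V_1}$ reaches its normal form by at most three $S_{(i)}$-reductions (Steps 1--3). We then track the length drop of each rule type, writing each relator $\overline{A}-\overline{B}$ with $|\overline{A}|-|\overline{B}|$:
\begin{enumerate}
\item $S_{(2,k)}$ loses $k-(4g-k)=2k-4g$. With $k\le 4g-2$ in Step 1 this is at most $4g-4$, and $S_{(2,2g+1)}$ loses exactly $2$.
\item $S_{(3,t)}$ loses $2$.
\item $S_{(4,t)}$ loses $0$.
\end{enumerate}
A crude sum gives $(4g-4)+2+2=4g$, which is already the desired bound. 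Free cancellation ($S_{(1)}$) does not occur at the junction because $\overline{UV}$ is freely reduced. We must still check that no $S_{(1)}$ step appears inside the later steps; Figure \ref{fig: reduction} shows none, since only $S_{(2)}$, $S_{(3)}$ and $S_{(4)}$ are used.

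The main obstacle is justifying that the three-step description is exhaustive, with no cascade of further reductions. This is exactly the content of \cite[Section 5]{WZZ25} and the tables, which we take as given. We must also confirm the exact admissible values of $k$ in each step: Step 1 allows $k\le 4g-2$ precisely because the $(4g-1)$ case is excluded, while Steps 2 and 3 allow only $k=2g+1$. If a finer check is needed, we would verify the bound directly against each item in \cite[Tables 4--7]{WZZ25}, including the symmetric cases marked $*$. Since symmetric cases are mirror images, they have identical length changes, so the maximum drop $4g$ is attained only when Step 1 is $S_{(2,4g-2)}$ and both later steps are $S_{(2,2g+1)}$ or $S_{(3)}$ reductions.
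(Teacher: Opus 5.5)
Your proposal is correct and follows essentially the same route as the paper. Both arguments use the at-most-three-step reduction from \cite[Tables 4--7]{WZZ25}, with a single $S_{(2,k)}$, $k\le 4g-2$, in Step 1 and two further steps each losing at most $2$, giving $(4g-4)+2+2=4g$. Your preliminary localization to $\rs(\overline{U},\overline{V})$ is harmless but not needed.
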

\begin{proof}
Since  $\overline{UV}$ has no $(4g-1)$-$\llfr$, there are at most three reducing process $S_{(2,k)}$($2g+1\leq k\leq 4g-2$), $S_{(3, t)}$ and  $S_{(4, t)}$ according to \cite[Tables 4--7]{WZZ25}. More precisely, at most one $S_{(2,k)}~(k\geq 2g+2)$. Note that  each $S_{(2,k)}$  (resp. $S_{(3, t)}$ )-reduction reduces the word length by $2k-4g$ (resp. $2$), while each $S_{(4, t)}$-reduction does not change the word length. 
Hence, we have
$$|U|+|V|-|UV|  \leq 2(4g-2)-4g+2\cdot 2=4g.$$
\end{proof}

\section{Several lemmas on conjugate reductions}\label{sect 4}

In this section, we present several lemmas concerning conjugate reductions. Let $\overline{X}$ and $\overline{U}$ be two irreducible words in $\W(\gs)$ such that  $\overline{X^{-1}UX}$ is freely reduced. We divide the discussion into two subsections according to whether $\overline{X^{-1}UX}$ contains a $(4g-1)$-$\llfr$.

\subsection{Non-existence of $(4g-1)$-$\llfr$} The following lemma still requires an item-by-item verification of the scenarios in the tables from \cite[Appendix B]{WZZ25} together with some routine arguments. Hence, we present discussions of selected items as its proof.

\begin{lem}\label{lem freely reduced}
    Let $\overline{X}=\overline{x_1\cdots x_{n}}$ and $\overline{U}$ be two irreducible words in $\W(\gs)$ such that  $\overline{X^{-1}UX}$ is freely reduced and has no $(4g-1)$-$\llfr$. If $\overline{\nf(X^{-1}U)X}$ is not freely reduced, then $\rs(\overline{X^{-1}}, \overline{U})=(\overline{X'},\overline{U})$ where $\overline{X'}=\overline{\cdots x_1^{-1}}$ is a subword of $\overline{X^{-1}}$ and $\nf(X^{-1}U)=\overline{Wx_1^{-1}}$ for some irreducible word $\overline{W}$ in $\W(\gs)$. 
    
    Furthermore, one of the following two conclusions holds:
    \begin{enumerate}
    \item[(a)] $\overline{X^{-1}}=\overline{\cdots b_1}$ and $ \overline{U}=\overline{(b_2\cdots b_{2g})^t}$ for some $\overline{b_1\cdots b_{4g}}\in\RR$ and $t\geq 1$.
        \item[(b)] $\overline{Wx_2\cdots x_n}$ is irreducible, and the word length is reduced at most $4g+2$ in the process "$\overline{X^{-1}UX}\to\overline{Wx_2\cdots x_n}=\nf(X^{-1}UX)$".
    \end{enumerate}
\end{lem}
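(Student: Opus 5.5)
We work throughout with the reduction procedure of Subsection \ref{subsect 3.1} applied to the pair $(\overline{X^{-1}},\overline{U})$. Write $\overline{X^{-1}}=\overline{x_n^{-1}\cdots x_1^{-1}}$ and set $(\overline{X'},\overline{U'}):=\rs(\overline{X^{-1}},\overline{U})$. Then $\nf(X^{-1}U)=\overline{A\,\nf(X'U')\,B}$, where $\overline{X^{-1}}=\overline{AX'}$ and $\overline{U}=\overline{U'B}$.

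\textbf{Step 1: the reduction reaches all of $\overline{U}$ and $x_1^{-1}\in\overline{X'}$.} Since $\overline{\nf(X^{-1}U)X}$ is not freely reduced, the last letter of $\nf(X^{-1}U)$ is $x_1^{-1}$. Suppose first that $\overline{B}$ is nonempty. Then this last letter is the last letter of $\overline{U}$. But $\overline{UX}$ is freely reduced, so the last letter of $\overline{U}$ is not $x_1^{-1}$, a contradiction. Hence $\overline{U'}=\overline{U}$.

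The last letter of $\nf(X^{-1}U)$ is now the last letter of $\nf(X'U)$. Inspecting the right-hand sides in \cite[Tables 4--7]{WZZ25}, the normal form $\nf(C_1C_2)$ ends either with the last letter of $\overline{C_2}$ or with a letter forced by the relator $\overline{b_1\cdots b_{4g}}$. In the second situation this letter has to be $x_1^{-1}$. We then check, item by item, that matching it against the letter of $\overline{X^{-1}}$ preceding $\overline{X'}$ would contradict either maximality of the $\llfr$ or the minimality in the definition of $\rs$, unless $\overline{X'}$ already ends at $x_1^{-1}$. In every case $\overline{X'}=\overline{\cdots x_1^{-1}}$, so $\nf(X^{-1}U)=\overline{Wx_1^{-1}}$, and $\overline{W}$ is irreducible as a prefix of a normal form.

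\textbf{Step 2: locating conclusion (a).} We list the items of the tables in which $\nf(X'U)$ ends with the last letter $x_1^{-1}$ of $\overline{X'}$. These are the items where $\overline{U}$ is absorbed entirely, and the relevant one is the $S_{(4,t)}$-type rewriting $\overline{b_1(b_2\cdots b_{2g})^t}\to\overline{(b_{2g}\cdots b_2)^tb_1}$ with $\overline{U}=\overline{(b_2\cdots b_{2g})^t}$. This is exactly conclusion (a). In the remaining items, $x_1^{-1}$ is a letter produced by an $S_{(2,k)}$ or $S_{(3,t)}$ rewriting, or it is the letter $b_1$ left at the end after an $S_{(3,t)}$/$S_{(4,t)}$ step.

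\textbf{Step 3: conclusion (b).} In these remaining items we cancel $x_1^{-1}x_1$ and show that $\overline{Wx_2\cdots x_n}$ is irreducible. The tool is Lemma \ref{claim 2} together with Lemma \ref{lem observation 1 of tables}: the tail $\overline{x_2\cdots x_n}$ is irreducible, and the two lemmas forbid $\overline{x_2\cdots}$ from continuing the fractional relator that ends $\overline{W}$. That continuation would be the only possible source of a new reducible subword at the junction, because a subword of type $S_{(i)}$ crossing the junction would give a long fractional relator in $\overline{X^{-1}UX}$ or contradict irreducibility of $\overline{X}$. For the length count, Lemma \ref{lem observation 2 of tables} shows that the first product loses at most $4g$ letters, and the free cancellation of $x_1^{-1}x_1$ loses $2$ more, for a total of at most $4g+2$.

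The main obstacle is Step 3: proving that no further reduction occurs after the single free cancellation. This requires going through the symmetric ($*$-marked) items as well. I would first treat the $S_{(2,k)}$ items, then the $S_{(3,t)}$ and $S_{(4,t)}$ items, using the no-$(4g-1)$-$\llfr$ hypothesis to rule out $\overline{x_2\cdots}$ extending a long relator.
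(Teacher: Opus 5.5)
Your outline matches the paper's in its skeleton. Both go item by item through \cite[Tables 4--7]{WZZ25}, show that $\overline U$ is consumed entirely and that $\nf(X^{-1}U)$ ends in $x_1^{-1}$, then either land in (a) or cancel $x_1^{-1}x_1$, and get $4g+2$ from Lemma \ref{lem observation 2 of tables} plus two. But the steps that carry the content are off.

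The second half of your Step 1 is vacuous. $\overline{X'}$ is by definition a suffix of $\overline{X^{-1}}$, so it always ends in $x_1^{-1}$. What the last-letter condition actually does is fix the parameters of each table item, by equating the last letter of $\overline{X'}$ with that of $\nf(X'U)$. For example, $b_{r+1}=b_2$ forces $r=1$ in the Table 5 item. This is how the paper separates (a) from (b).

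Done this way, (a) does not come only from $\overline{b_1(b_2\cdots b_{2g})^t}\to\overline{(b_{2g}\cdots b_2)^tb_1}$. It also comes from the second subcase of item 5 of \cite[Table 6]{WZZ25}: there $\overline{X'U}=\overline{(b_1\cdots b_{2g-1})^{t_0}b_{2g}}$ with $r=1$, so $\overline U=\overline{b_2\cdots b_{2g}}$. It comes as well from the first subcase of item 4 of \cite[Table 7]{WZZ25}, where $s=2g-1$. Your criterion ``$\overline U$ is absorbed entirely'' distinguishes nothing, since Step 1 shows it holds in every item. With your sorting you would owe a proof of (b) in these cases too.

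The genuine gap is Step 3. The two lemmas you cite do not reach the junction of $\overline W$ and $\overline{x_2\cdots x_n}$:
\begin{itemize}
\item Lemma \ref{lem observation 1 of tables} constrains the unused parts of the two factors adjacent to the rewritten block. Here the unused part of $\overline U$ is empty, and the lemma says nothing about $\overline{x_2\cdots x_n}$.
\item Lemma \ref{claim 2} concerns deleting a power $\overline{(b_{2g}\cdots b_2)^t}$ after $b_{2g+1}$, which is a different situation.
\end{itemize}
Your principle that a reducible subword crossing the junction ``would give a long fractional relator in $\overline{X^{-1}UX}$ or contradict irreducibility of $\overline X$'' is unfounded. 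The letters of $\overline W$ at the junction are created by the rewriting and are not letters of $\overline{X^{-1}UX}$. Moreover, leading words of type $S_{(3,t)}$ or $S_{(4,t)}$ can cross a junction at a non-relator break, such as $\overline{d_{2g}d_2}$ or $\overline{d_{2g-1}d_1}$ in a suitable labelling, without involving any long fractional relator.

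The idea you are missing is the one behind the paper's checks. In each item $\overline{X'}$ is explicit, and since $\overline X=\overline{X'^{-1}\cdots}$, the word $\overline{x_2x_3\cdots}$ begins with the explicit word $\overline{X'^{-1}}$ minus its first letter. Once the parameters are fixed, the letters on both sides of the junction are known. One then checks directly that no $S_{(i)}$ leading word straddles the junction. In the Table 5 item, for instance, the junction pair $\overline{b_3b_{4g}}$ lies in no leading word at all.
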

\begin{proof}
Since $\overline{X^{-1}UX}$ is freely reduced whereas $\overline{\nf(X^{-1}U)X}$ is not, we have the reducing-subword pair $\rs(\overline{X^{-1}},\overline{U})=(\overline{X'},\overline{U})$ and $\nf(X^{-1}U)$ must end with $x_1^{-1}$, i.e. $\nf(X^{-1}U)=\overline{Wx_1^{-1}}$ for some $\overline{W}$ in $\W(\gs)$.
Furthermore, if $\overline{Wx_2\cdots x_n}$ is irreducible, then $\nf(X^{-1}UX)=\overline{Wx_2\cdots x_n}$, and hence
\begin{eqnarray*}
& & |\overline{X^{-1}UX}|-|\nf(X^{-1}UX)|\\
    &=& 2|X|+|U|-|Wx_2\cdots x_n|\\
    &=& (|X|+|U|-|X^{-1}U|)+(|X^{-1}U|+|X|-|Wx_2\cdots x_n|)\\
    \mbox{by Lemma \ref{lem observation 2 of tables}}&\leq &4g+(|Wx_1^{-1}|+n-|Wx_2\cdots x_n|)\\
    &=&4g+2.
\end{eqnarray*}
Hence, to prove that conclusion (b) holds, it suffices to prove that $\overline{Wx_2\cdots x_n}$ is irreducible.

We now need to check the items in \cite[Tables 4--7]{WZZ25} by setting $$\rs(\overline{X^{-1}},\overline{U})=(\overline{X'},\overline{U})=(\overline{C_1},\overline{C_2}),$$
$$\nf(X^{-1}U)=\overline{Wx_1^{-1}}=\overline{\cdots\nf(C_1C_2)}.$$
Note that the last letters of both $\overline{C_1}$ and $\overline{\nf(C_1C_2)}$ must be the same, which equal to $x_1^{-1}$.  Therefore, the only item in \cite[Table 4]{WZZ25} does not occur in this lemma. For the items in \cite[Tables 5--7]{WZZ25}, since they are similar and tedious, we only provide a few examples for the sake of illustration, and all other items can be verified by the same way.
    
(1) In \cite[Table 5]{WZZ25}, 
    $$\overline{X'}=\overline{\cdots x_1^{-1}}=\overline{b_1(b_2\cdots b_{2g})^{t_1}b_2\cdots b_{r+1}},\quad \overline{Wx_1^{-1}}=\overline{\cdots(b_{2g}\cdots b_2)^{t_1+t_2+1}}.$$
    Then $b_{r+1}=x_1^{-1}=b_2$ and thus $r=1$. Therefore, 
    \begin{eqnarray*}
\overline{\nf(X^{-1}U)X}&=&\overline{\underbrace{\cdots (b_{2g}\cdots b_2)^{t_1+t_2+1}}_{\nf(X^{-1}U)=Wx_1^{-1}}\underbrace{b_{2g+2}(b_{4g}\cdots b_{2g+2})^{t_1}b_{2g+1}\cdots}_{X=x_1x_2\cdots x_n}}\\      &\xrightarrow{S_{(1)}}&\overline{\underbrace{\cdots (b_{2g}\cdots b_2)^{t_1+t_2}b_{2g}\cdots b_3}_W\underbrace{(b_{4g}\cdots b_{2g+2})^{t_1}b_{2g+1}\cdots}_{x_2\cdots x_n}}=:\overline{H}.
    \end{eqnarray*}
Note that $\overline{W}$ and $\overline{x_2\cdots x_n}$ are irreducible. If $\overline{H}$ is reducible, then $\overline{H}$ must contain a subword of type $S_{(i)}$ at the junction of $\overline{W}$ and $\overline{x_2\cdots x_n}$. However, there is no such subword containing $\overline{b_3b_{4g}}$. Hence $\overline{H}$ is irreducible and conclusion (b) holds.

(2) In item 5 of \cite[Table 6]{WZZ25}, since it has a symmetric reduction, we need to verify two subcases:
\begin{itemize}
        \item $(\overline{X'},\overline{U})=(\overline{b_1\cdots b_r}, ~\overline{b_{r+1}\cdots b_{2g}(b_2\cdots b_{2g})^{t_0-1}})$ and $\overline{Wx_1^{-1}}=\overline{\cdots (b_{2g}\cdots b_2)^{t_0}b_1}$. 
        Then $b_r=x_1^{-1}=b_1$, which implies $r=1$ and hence $\overline{U}=\overline{(b_2\cdots b_{2g})^{t_0}}$. Therefore, conclusion (a) holds.

\item $(\overline{X'},\overline{U})=(\overline{(b_1\cdots 
b_{2g-1})^{t_0-1}b_1\cdots b_r}, ~\overline{b_{r+1}\cdots b_{2g}})$ and $\overline{Wx_1^{-1}}=\overline{\cdots b_{2g}(b_{2g-1}\cdots b_1)^{t_0}}$. Then we also have $b_r=x_1^{-1}=b_1$, which implies $r=1$. Hence conclusion (a) also holds.
\end{itemize}

(3) In item 4 of \cite[Table 7]{WZZ25}, we also have two subcases:
\begin{itemize}
    \item $(\overline{X'},\overline{U})=(\overline{b_{4g}b_1\cdots b_{2g-1}b_1\cdots b_r}, ~\overline{b_{r+1}\cdots b_{r+s}})$ and $\overline{Wx_1^{-1}}=\overline{\cdots b_{2g-1}\cdots b_1b_{2g-1}\cdots b_{r+s-2g+1}}$. Then $b_r=x_1^{-1}=b_{r+s-2g+1}$, which implies that $s=2g-1$ and $2\leq r\leq 2g-1$. Hence, conclusion (a) holds.

    \item $(\overline{X'},\overline{U})=(\overline{b_1\cdots b_r},\overline{b_{r+1}\cdots b_{r+s}b_{r+s-2g+2}\cdots b_{r+s}b_{r+s+1}})$ and $$\overline{Wx_1^{-1}}=\overline{\cdots b_{2g}\cdots b_{r+s-2g+2}b_{r+s}\cdots b_{r+s-2g+2}}.$$
    Then $b_{r}=x_1^{-1}=b_{r+s-2g+2}$, which implies that $s=2g-2$ and $3\leq r\leq 2g$. Therefore, 
    \begin{eqnarray*}
\overline{\nf(X^{-1}U)X}&=&\overline{\underbrace{\cdots b_{2g}\cdots b_{r}b_{r+2g-2}\cdots b_{r+1}b_{r}}_{\nf(X^{-1}U)=Wx_1^{-1}}\underbrace{b_{2g+r}b_{2g+r-1}\cdots b_{2g+1}\cdots }_{X=x_1x_2\cdots x_n}}\\
&\xrightarrow{S_{(1)}}&\overline{\underbrace{\cdots b_{2g}\cdots b_{r}b_{r+2g-2}\cdots b_{r+1}}_W\underbrace{b_{2g+r-1}\cdots b_{2g+1}\cdots}_{x_2\cdots x_n}}=:\overline{H}.
\end{eqnarray*}
Since $3\leq r\leq 2g$, we find that $\overline{H}$ has no subword of type $S_{(i)}$ at the junction of $\overline{W}$ and $\overline{x_2\cdots x_n}$. Therefore, $\overline{H}$ is irreducible and conclusion (b) holds.
\end{itemize}  
\end{proof}

The proof of the following lemma requires writing out all possible reduction processes based on the tables. Here, we provide a complete discussion.

\begin{lem}\label{lem observations only case of 4g-1}
Let $\overline{X}$ and $\overline{U}$ be two irreducible words in $\W(\gs)$ such that $\overline{X^{-1}UX}$ is freely reduced and has no $(4g-1)$-$\llfr$. Then, one of the following items holds:
\begin{enumerate}
    \item[(a)] For some $\overline{b_1\cdots b_{4g}}\in\RR$ and $t_0\geq 1$, we have
    \begin{eqnarray*}
\overline{X^{-1}UX}&=&\left\{\begin{array}{ll}
     \overline{\underbrace{\cdots b_{4g}\cdots b_{2g+2}b_{2g+1}}_{X^{-1}}\underbrace{(b_{2g+2}\cdots b_{4g})^{t_0}b_1b_{2g+3}\cdots b_{4g}}_{U}\underbrace{b_1b_2\cdots b_{2g}\cdots}_{X}} :=\overline{W_1},& \mbox{or} \\
     \overline{\underbrace{\cdots b_2\cdots b_{2g}b_{2g+1}}_{X^{-1}}\underbrace{b_{2g+2}\cdots b_{4g-1}b_{2g+1}(b_{2g+2}\cdots b_{4g})^{t_0}}_{U}\underbrace{b_1b_{4g}\cdots b_{2g+2}\cdots}_{X}}:=\overline{W_2}.& 
\end{array}\right.
    \end{eqnarray*}
    \item[(b)] Neither $\overline{\nf(X^{-1}U)X}$ nor $\overline{X^{-1}\nf(UX)}$ has a $(4g-1)$-$\llfr$.
\end{enumerate}
\end{lem}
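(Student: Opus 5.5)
We argue by contradiction on conclusion (b). By the symmetry $\overline{X^{-1}UX}\leftrightarrow(\overline{X^{-1}UX})^{-1}$, which exchanges the roles of $\overline{\nf(X^{-1}U)X}$ and $\overline{X^{-1}\nf(UX)}$ up to inversion, it suffices to assume that $\overline{\nf(X^{-1}U)X}$ contains a $(4g-1)$-$\llfr$. We then show that $\overline{X^{-1}UX}$ must be $\overline{W_1}$ (the symmetric side will produce $\overline{W_2}$). Since $\overline{X^{-1}UX}$ itself has no $(4g-1)$-$\llfr$ and $\overline{X}$ is irreducible, this $\llfr$ must be newly created. It therefore straddles the junction of $\nf(X^{-1}U)$ and $\overline{X}$, or lies partly inside the rewritten part $\nf(C_1C_2)$, where $(\overline{C_1},\overline{C_2})=\rs(\overline{X^{-1}},\overline{U})$. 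If $\overline{X^{-1}U}$ is already irreducible, then $\overline{\nf(X^{-1}U)X}=\overline{X^{-1}UX}$ and nothing is to be proved. So we may assume that $\overline{X^{-1}U}$ is reducible, and we run through the reduction procedure of Subsection \ref{subsect 3.1} (Steps 1--3) using \cite[Tables 4--7]{WZZ25}; Table 8 is excluded by hypothesis.

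First, we split according to whether $\overline{\nf(X^{-1}U)X}$ is freely reduced. If it is not, Lemma \ref{lem freely reduced} applies. In case (b) of that lemma, $\nf(X^{-1}UX)=\overline{Wx_2\cdots x_n}$ is irreducible, and a $(4g-1)$-fractional relator at the junction would be reducible via $S_{(2,4g-1)}$, so none exists after cancellation. We still have to rule out a $(4g-1)$-$\llfr$ spanning the cancelled letter $x_1$ before the $S_{(1)}$-step. Here we use the form $\nf(C_1C_2)$ in the tables together with Lemma \ref{lem observation 1 of tables}: the tail of $\nf(C_1C_2)$ is a fractional relator $\overline{b_1\cdots b_s}$ with $s\le 2g$, and $\overline{X}$ (playing the role of $V_2$-type continuation) cannot continue it by $b_{s+1}$. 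In case (a), where $\overline{U}=\overline{(b_2\cdots b_{2g})^t}$, we compute directly. In the freely reduced case, a $(4g-1)$-$\llfr$ at the junction has length $4g-1>2g$. It must therefore contain the last part of $\nf(C_1C_2)$ together with a piece of $\overline{X}$ of length at least $2g$, or the reverse. By Lemma \ref{lem observation of LLFR of reduction}, applied with $\nf(X^{-1}U)$ in the role of $\overline{X}$ and $\overline{X}$ in the role of $\overline{Y}$, the junction must fall into one of the cases (a), (b), (c) of that lemma. We check which of these are compatible with the condition that $\overline{X^{-1}}$ is the inverse of the continuation and that $\overline{X^{-1}UX}$ has no $(4g-1)$-$\llfr$.

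The key constraint is that $\overline{X^{-1}}$ ends exactly in the inverse of the initial segment of $\overline{X}$. A relator segment $\overline{b_1b_2\cdots b_{2g}}$ at the start of $\overline{X}$ forces $\overline{X^{-1}}$ to end in $\overline{b_{4g}\cdots b_{2g+2}b_{2g+1}}$, written with the identities $b_k=b_{k\pm 2g}^{-1}$. Feeding this into each table item for $\rs(\overline{X^{-1}},\overline{U})$, we expect that almost all items give a contradiction. The typical contradictions are: the relator $\overline{X^{-1}U}$ would already contain a $(4g-1)$-fractional relator, or the irreducibility of $\overline{U}$ or $\overline{X}$ fails, using Lemma \ref{claim 2} to strip powers $(b_{2g}\cdots b_2)^t$. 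The one surviving item should be the $S_{(3,t_0)}$ reduction followed by an $S_{(2,2g+1)}$ step, with $\overline{U}=\overline{(b_{2g+2}\cdots b_{4g})^{t_0}b_1b_{2g+3}\cdots b_{4g}}$. After reduction this produces $\overline{\cdots b_{2g+3}\cdots b_{4g}}$ (or a similar tail) meeting $\overline{b_1\cdots b_{2g}}$, which yields exactly the word $\overline{W_1}$.

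The main obstacle is this exhaustive item-by-item check over \cite[Tables 5--7]{WZZ25}, including their unlisted symmetric cases. The delicate items are those where Step 2 or Step 3 occurs, since the rewritten tail $\overline{D_1}$ or $\overline{D_2}$ can extend into a long relator with $\overline{X}$. I would organize the check by the length $r+s$ of the junction $\llfr$ in $\overline{X^{-1}U}$ (cases $2g-1$, $2g$, and $2g+1$ to $4g-2$). For each item, I would compute only the last $2g$ letters of $\nf(C_1C_2)$, compare them with the forced ending of $\overline{X^{-1}}$, and discard every item in which they are incompatible.
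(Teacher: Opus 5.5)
Your outline follows the paper's route. You assume $\overline{\nf(X^{-1}U)X}$ carries the $(4g-1)$-$\llfr$, read off the shape of $\overline{X}$ from \cite[Table 8]{WZZ25} and hence the tail of $\overline{X^{-1}}$, and then eliminate the possible reductions of $\overline{X^{-1}U}$. Your inversion symmetry is sound, but you should justify it: the leading words of the $S$-set are closed under inversion, because inversion reverses the order on $\gs^{\pm}$. Hence $\nf(w^{-1})=\nf(w)^{-1}$, and $\overline{W_1}^{-1}$ becomes a word of type $\overline{W_2}$ after relabelling $b'_k=b_{4g+2-k}$.

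The gap is that the proposal stops where the proof begins (``we expect'', ``should be''), and the elimination criterion you propose is not the one that works. The missing observation is this. Write $\overline{U}=\overline{U_0U_1}$ with $(\overline{X_0},\overline{U_0})=\rs(\overline{X^{-1}},\overline{U})$. Then $\overline{U_1}$ cannot end in $\overline{b_{2g+2}\cdots b_{4g}}$, since otherwise $\overline{U_1X}=\overline{U_1b_1\cdots b_{2g}\cdots}$ would already contain a $(4g-1)$-fractional relator. Hence the tail $\overline{(b_{2g+2}\cdots b_{4g})^t}$ of $\nf(X^{-1}U)=\overline{X_1\nf(X_0U_0)U_1}$ must reach into $\nf(X_0U_0)$, so $\overline{U_1}=\overline{b_{2g+s+1}\cdots b_{4g}}$. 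Lemma~\ref{lem observation 1 of tables}, with $\overline{V_2}=\overline{U_1}$, then forces $s\in\{2,2g\}$, which dictates how $\nf(X_0U_0)$ must end.

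Separately, Lemma~\ref{lem observation of LLFR of reduction} must be applied to the pair $(\overline{X^{-1}},\overline{U})$, using the forced tail $\overline{(b_{4g}\cdots b_{2g+2})^tb_{2g+1}}$ of $\overline{X^{-1}}$, to restrict $\overline{X_0}$ to three shapes. This yields the eight candidates of Table~\ref{tab: lemma case 1}. All but two die, either because $\nf(X_0U_0)$ ends wrongly or because $\overline{U}$ becomes reducible. Applying that lemma to $(\nf(X^{-1}U),\overline{X})$, as you propose, only returns the Table 8 shape you already have. Comparing the end of $\nf(C_1C_2)$ with the end of $\overline{X^{-1}}$ is also the wrong test: it must be compared with the forced end of $\nf(X^{-1}U)$ after stripping $\overline{U_1}$.

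You also treat only one of the two shapes in \cite[Table 8]{WZZ25}. The other is $\overline{X}=\overline{(b_1\cdots b_{2g-1})^tX_2}$, with $\nf(X^{-1}U)$ ending in $\overline{(b_{2g+1}\cdots b_{4g-1})^tb_{4g}}$ and $b_{2g+1}\prec b_{4g}$. It needs its own analysis: $s\in\{1,2g\}$, case (a) of Lemma~\ref{lem observation of LLFR of reduction} is impossible, and there are nine candidates (Table~\ref{tab: lemma case 2}), every one of which must be excluded.

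Your predicted survivor is misdescribed as well. It is a single step with $\overline{X_0}=\overline{b_{2g+1}}$ and $\overline{U_0}=\overline{(b_{2g+2}\cdots b_{4g})^{t_0}b_1}$, reduced by $S_{(3,t_0)}$ if $t_0\geq 2$ and by $S_{(2,2g+1)}$ if $t_0=1$; it is not $S_{(3,t_0)}$ followed by $S_{(2,2g+1)}$. It forces $t=1$, $s=2$ and $\overline{U_1}=\overline{b_{2g+3}\cdots b_{4g}}$.

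Finally, your non-freely-reduced branch is vacuous. A $(4g-1)$-$\llfr$ of $\overline{\nf(X^{-1}U)X}$ cannot lie inside either irreducible factor, and no fractional relator contains $\overline{x_1^{-1}x_1}$. So Lemma~\ref{lem freely reduced} is not needed there. Your appeal to Lemma~\ref{lem observation 1 of tables} with $\overline{X}$ as a ``continuation'' also misreads that lemma, which concerns the remainder $\overline{V_2}$ of the right-hand factor itself.
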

\begin{proof}

Denote the reducing-subword pair  
\begin{equation}\label{eq. reducing-pair 1}
    \rs(\overline{X^{-1}},\overline{U})= (\overline{X_0},\overline{U_0}),
\end{equation} where $\overline{X^{-1}}=\overline{X_1X_0}$ and $\overline{U}=\overline{U_0U_1}$. 

If item (b) does not hold, then either $\overline{\nf(X^{-1}U)X}$ or $\overline{X^{-1}\nf(UX)}$ has a $(4g-1)$-$\llfr$.  We shall show $\overline{X^{-1}UX}=\overline{W_1}$ whenever the former holds, and $\overline{X^{-1}UX}=\overline{W_2}$ whenever the latter holds. Since their proofs are almost identical, we only give the proof of the former to avoid repetition. 

Suppose $\overline{\nf(X^{-1}U)X}$ has a $(4g-1)$-$\llfr$. Then $$\rs(\nf(X^{-1}U),\overline{X})= (\overline{C_1},\overline{C_2})$$ as in \cite[Table 8]{WZZ25}. Accordingly, there are two symmetrical cases as follows. 

\textbf{Case (1).} $\nf(X^{-1}U)=\overline{U_2(b_{2g+2}\cdots b_{4g})^t}$ and $\overline{X}=\overline{b_1(b_2\cdots b_{2g})^tX_2}$ for some $t\geq 1$,  $\overline{b_1\cdots b_{4g}}\in\RR$, $\overline{X_2}, \overline{U_2}\in\W(\gs)$ and $b_1\prec b_{2g}$. 

Note that $\overline{U_1X}=\overline{U_1b_1\cdots b_{2g}\cdots}\subset\overline{UX}$ and $\overline{X^{-1}UX}$ is freely reduced and has no $(4g-1)$-$\llfr$, 
we have $\overline{U_1}\neq \overline{\cdots b_{2g+2}\cdots b_{4g}}$. Moreover,  by the discussion at the beginning of Subsection \ref{subsect 3.1}, we know that the reduction 
\begin{eqnarray}\label{eq. reduction X^-1UX}
   \overline{X^{-1}U}\to\nf(X^{-1}U)&=&\overline{X_1\nf(X_0U_0)U_1}\notag\\
   &=&\overline{U_2 (b_{2g+2}\cdots b_{4g})^{t-1}b_{2g+2}\cdots b_{2g+s} \underbrace{b_{2g+s+1}\cdots b_{4g}}_{U_1}} ~(2\leq s\leq 2g)
\end{eqnarray}
can also be finished in at most three steps. (See Figure \ref{fig: reduction}. We use the same notations: $S_{(i_1)}$ (resp. $S_{(i_2)}$, $S_{(i_3)}$) denotes the first (resp. the left, the right) reduction process.)  Then by Lemma \ref{lem observation 1 of tables}, we have $s=2$ or $2g$, and 
\begin{equation}\label{eq. reducing-pair 2}
   \overline{X_1\nf(X_0U_0)}=\left\{\begin{array}{ll}
       \overline{U_2(b_{2g+2}\cdots b_{4g})^{t-1}b_{2g+2}}  & \mbox{if }s=2 , \\
        \overline{U_2(b_{2g+2}\cdots b_{4g})^t} &\mbox{if } s=2g.
    \end{array}\right.
\end{equation}

Note that 
\begin{equation}\label{eq. X-1 case 1}
    \overline{X^{-1}}=\overline{X_2^{-1}(b_{4g}\cdots b_{2g+2})^tb_{2g+1}}
\end{equation}
is irreducible. Then, applying Lemma \ref{lem observation of LLFR of reduction} to $\rs(\overline{X^{-1}},\overline{U})= (\overline{X_0},\overline{U_0})$ in
Eq. (\ref{eq. reducing-pair 1}), 
we obtain
\begin{equation}\label{eq. X0 case 1}
    \overline{X_0}=\left\{\begin{array}{ll}
    \overline{b_{4g}\cdots b_{2g+1}}&\mbox{if Lemma \ref{lem observation of LLFR of reduction}(a) holds},\\
        \overline{\cdots b_{4g}\cdots b_{2g+1}}&\mbox{if Lemma \ref{lem observation of LLFR of reduction}(b) holds},\\
        \overline{b_{2g+1}}   & \mbox{if Lemma \ref{lem observation of LLFR of reduction}(c) holds}.
    \end{array}\right.
\end{equation}
Furthermore, by Lemma \ref{lem observation of LLFR of reduction}, we can obtain all the possible cases of $(\overline{X_0},\overline{U_0})$ and list them in Table \ref{tab: lemma case 1}, where item (1) (resp. items (2--5) and items (6--8)) of Table \ref{tab: lemma case 1} corresponds to Lemma \ref{lem observation of LLFR of reduction}(a) (resp. \ref{lem observation of LLFR of reduction}(b) and \ref{lem observation of LLFR of reduction}(c)).
\begin{table}[ht]
    \centering
    \begin{tabular}{|M{1.9cm}|M{4.5cm}|M{3.6cm}|M{2.3cm}|M{2.4cm}|}
    \hline
    Item&Reducing-subword pair $(\overline{X_0},\overline{U_0})$ & $\nf(X_0U_0)$& Reducing operation $S_{(i_1)},S_{(i_2)},S_{(i_3)}$&Parameter declaration \\
\hline
 $(1);(4.1)$&$(\overline{b_{4g}\cdots b_{2g+1}},$ $\overline{(b_{4g-1}\cdots b_{2g+1})^{t_2}b_{2g}})$ & 
 $\overline{(b_{2g+1}\cdots b_{4g-1})^{1+t_2}}$&
  $S_{(3,t_2+1)},$ $-, -$& $t_2\geq 1$;
  $b_{4g-1}\prec b_{2g}$\\
\hline
\hline
$(2);(7.1,7.2*,$ $7.3*,7.4*)$&$(\overline{\cdots b_{4g}\cdots b_{2g+1}},$ $\overline{b_{2g}\cdots b_{r}b_{2g+r-2}\cdots b_{r-1}})$&
$\overline{\cdots b_{2g+2}\cdots b_{2g+r-2}}$ $\cdot\overline{b_r\cdots b_{2g+r-2}}$&
$S_{(2,4g-r+1)},$ $S_{(i_2)}, S_{(2,2g+1)}$& $3\leq r\leq 2g;$
$b_{2g+r-2}\prec b_{r-1}$\\
\hline
$(3);(7.2,7.5,$ $7.6*,7.7*)$&$(\overline{\cdots b_{4g}\cdots b_{2g+1}},$ $\overline{b_{2g}\cdots b_{r}(b_{2g+r-2}\cdots b_{r})^{t_1}b_{r-1}})$&
$\overline{\cdots b_{2g+2}\cdots b_{2g+r-2}}$ $\cdot\overline{(b_r\cdots b_{2g+r-2})^{t_1}}$&
$S_{(2,4g-r+1)},$ $S_{(i_2)}, S_{(3,t_1)}$& $3\leq r\leq 2g;$
$b_{2g+r-2}\prec b_{r-1}$\\
\hline
$(4);(7.3,7.6,$ $7.8,7.9*)$&$(\overline{\cdots b_{4g}\cdots b_{2g+1}},$ $\overline{b_{2g}\cdots b_{r}(b_{2g+r-2}\cdots b_{r})^{t_1}})$&
$\overline{\cdots b_{2g+2}\cdots b_{2g+r-2}}$ $\cdot\overline{(b_r\cdots b_{2g+r-2})^{t_1}b_{2g+r-1}}$&
$S_{(2,4g-r+1)},$ $S_{(i_2)}, S_{(4,t_1)}$& $3\leq r\leq 2g;$
$b_{2g+r-1}\succ b_{r}$\\
\hline
$(5);(7.4,7.7,$ $7.9,7.10)$&$(\overline{\cdots b_{4g}\cdots b_{2g+1}},$ $\overline{b_{2g}\cdots b_{r}})$&
$\overline{\cdots b_{2g+2}\cdots b_{2g+r-1}}$&
$S_{(2,4g-r+1)},$ $S_{(i_2)}, ~-$&
$3\leq r\leq 2g$\\
\hline
\hline
$(6);(6.1)$&$(\overline{b_{2g+1}},\overline{(b_{2g+2}\cdots b_{4g})^{t_1}b_1})$&
$\overline{(b_{4g}\cdots b_{2g+2})^{t_1}}$&
$S_{(3,t_1)},-,-$& $t_1\geq 2;$
$b_{2g+2}\prec b_1$ \\
\hline
$(7);(6.5)$&$(\overline{b_{2g+1}},\overline{(b_{2g+2}\cdots b_{4g})^{t_1}})$&

$\overline{(b_{4g}\cdots b_{2g+2})^{t_1}b_{2g+1}}$&

$S_{(4,t_1)},-,-$ & $t_1\geq 1$\\
\hline
$(8);(7.10)$&$(\overline{b_{2g+1}},\overline{b_{2g+2}\cdots b_{4g}b_1})$&
$\overline{b_{4g}\cdots b_{2g+2}}$&
$S_{(2,2g+1)},-,-$&
$b_{2g+2}\prec b_1$\\
\hline

\end{tabular}
\caption{All the possible reductions of $(\overline{X^{-1}},\overline{U})$ for \textbf{Case (1)}. Item $(i);(m.n,p.q*)$ means that this is the $i$-th item, corresponding to item $n$ in \cite[Table $m$]{WZZ25} and the symmetric case of item $q$ in \cite[Table $p$]{WZZ25}. ``$S_{(i_j)}=-$" means that it does not exist.}
    \label{tab: lemma case 1}
\end{table}


By checking Table \ref{tab: lemma case 1} item-by-item, we shall show that only items (6)(8) can appear in Lemma \ref{lem observations only case of 4g-1} and conclusion (a) holds. More precisely,
\begin{enumerate}
    \item If items (1)(7) appear, then $\nf(X_0U_0)=\overline{\cdots b_{4g-1}}\mbox{ or }\overline{\cdots b_{2g+1}},$ which contradicts Eq. (\ref{eq. reducing-pair 2}). 
    \item If items (2)(3) appear, then by Eq. (\ref{eq. reducing-pair 2}), we have $r=4,s=2$ and 
    $$\overline{U}=\overline{\underbrace{\cdots b_{3}}_{U_0}\underbrace{b_{2g+3}\cdots b_{4g}}_{U_1}}$$
    is reducible, which contradicts the hypothesis that $\overline{U}$ is irreducible.
    \item If items (4)(5) appear, we can obtain contradictions by the same arguments as in item (2).
    \item If items (6)(8) appear, we have $s=2$, $t=1$ in Eq. (\ref{eq. reducing-pair 2}) and $$\overline{U}=\overline{\underbrace{(b_{2g+2}\cdots b_{4g})^{t_0}b_1}_{U_0} \underbrace{b_{2g+3}\cdots b_{4g}}_{U_1}},\qquad t_0\geq 1, ~b_{2g+2}\prec b_1.$$
    Combining Eq. (\ref{eq. X-1 case 1}), we obtain that $\overline{X^{-1}UX}=\overline{W_1}$ in conclusion (a) holds.
\end{enumerate}

\textbf{Case (2).} $\nf(X^{-1}U)=\overline{U_2(b_{2g+1}\cdots b_{4g-1})^tb_{4g}}$ and $\overline{X}=\overline{(b_1\cdots b_{2g-1})^tX_2}$ with $t\geq 1$ for some $\overline{b_1\cdots b_{4g}}\in\RR$, $\overline{X_2}\in\W(\gs)$ and $b_{2g+1}\prec b_{4g}$. 

By some arguments  analogous to that in Case (1), we have the following equation parallel to Eq. (\ref{eq. reduction X^-1UX}):
$$\nf(X^{-1}U)=\overline{X_1\nf(X_0U_0)U_1}=\overline{U_2 (b_{2g+1}\cdots b_{4g-1})^{t-1}b_{2g+1}\cdots b_{2g+s}\underbrace{b_{2g+s+1}\cdots b_{4g}}_{U_1}} \quad (1\leq s\leq 2g).$$
Then by Lemma \ref{lem observation 1 of tables}, we have $s=1$ or $2g$, and
\begin{equation}\label{eq. s=1,2g}
    \overline{X_1\nf(X_0U_0)}=\left\{\begin{array}{ll}
       \overline{U_2(b_{2g+1}\cdots b_{4g-1})^{t-1}b_{2g+1}}  & \mbox{if }s=1, \\
        \overline{U_2(b_{2g+1}\cdots b_{4g-1})^tb_{4g}} &\mbox{if } s=2g.
    \end{array}\right.
\end{equation}
Note that 
\begin{equation}\label{eq. X-1 case 2}
    \overline{X^{-1}}=\overline{X_2^{-1}(b_{4g-1}\cdots b_{2g+1})^t}
\end{equation}
is irreducible.  Then, applying Lemma \ref{lem observation of LLFR of reduction} to $\rs(\overline{X^{-1}},\overline{U})= (\overline{X_0},\overline{U_0})$ in
Eq. (\ref{eq. reducing-pair 1}), 
we obtain that Lemma \ref{lem observation of LLFR of reduction}(a) does not hold in this case, and
\begin{equation}\label{eq. X0}
    \overline{X_0}=\left\{\begin{array}{ll}
        \overline{\cdots b_{4g-1}\cdots b_{2g+1}}&\mbox{if Lemma \ref{lem observation of LLFR of reduction}(b) holds},\\
        \overline{b_{2g+1}}   & \mbox{if Lemma \ref{lem observation of LLFR of reduction}(c) holds}.
    \end{array}\right.
\end{equation}
Furthermore, by Lemma \ref{lem observation of LLFR of reduction}, we can obtain all the possible cases of $(\overline{X_0},\overline{U_0})$ and list them in Table \ref{tab: lemma case 2}, where items (1--7) (resp. items (8--9)) of Table \ref{tab: lemma case 2} correspond to Lemma \ref{lem observation of LLFR of reduction}(b) (resp. \ref{lem observation of LLFR of reduction}(c)).
\begin{table}[h]
    \centering
    \begin{tabular}{|M{1.9cm}|M{4.4cm}|M{3.6cm}|M{2.5cm}|M{2.4cm}|}
    \hline
    Item &Reducing-subword pair $(\overline{X_0},\overline{U_0})$ & $\nf(X_0U_0)$& Reducing operation $S_{(i_1)},S_{(i_2)}, ~S_{(i_3)}$&Parameter declaration \\
\hline
$(1);(6.1,6.2,$ $6.3,6.4)$& $(\overline{\cdots b_{4g-1}\cdots b_{2g+1}},$ $\overline{b_{2g}(b_{4g-2}\cdots b_{2g})^{t_1}b_{2g-1}})$&

$\overline{\cdots b_{2g-1}\cdots b_{4g-2}}$ $\overline{\cdot(b_{2g}\cdots b_{4g-2})^{t_1}}$&

$S_{(3,t_1+1)}, S_{(i_2)}, -$&
$t_1\geq 1;$
$b_{4g-2}\prec b_{2g-1}$\\

\hline
$(2);(6.5*)$& $(\overline{(b_{4g-1}\cdots b_{2g+1})^{t'}},\overline{b_{2g}})$&

$\overline{b_{2g}(b_{2g+1}\cdots b_{4g-1})^{t'}}$&

$S_{(4,t')},$ $-, -$&
$t'\geq 1;$
$b_{4g-1}\succ b_{2g}$\\
\hline
$(3);(6.6)$& $(\overline{(b_{4g-1}\cdots b_{2g+1})^{t'}},$ $\overline{b_{2g}(b_{4g-2}\cdots b_{2g})^{t_1}})$&

$\overline{b_{2g}(b_{2g+1}\cdots b_{4g-1})^{t'-1}}$ $\cdot\overline{b_{2g+1}\cdots b_{4g-2}}$ 
$\overline{\cdot(b_{2g}\cdots b_{4g-2})^{t_1}b_{4g-1}}$&

$S_{(4,t')},$ $-, ~S_{(4,t_1)}$&
$t'\geq t\geq 1;$
$b_{4g-1}\succ b_{2g}$\\
\hline
$(4);(7.1,$ $7.2*,7.3*,$ $7.4*)$& $(\overline{\cdots b_{4g-1}\cdots b_{2g+1}},$ $\overline{b_{2g}\cdots b_{r}b_{2g+r-2}\cdots b_rb_{r-1}})$&

$\overline{\cdots b_{2g+1}\cdots b_{2g+r-2}}$ $\overline{\cdot b_{r}\cdots b_{2g+r-2}}$&

$S_{(2,4g-r)},$ $S_{(i_2)}, S_{(2, 2g+1)}$&
 $2\leq r\leq 2g-1;$ $b_{2g+r-2}\prec b_{r-1}$\\
\hline
$(5);(7.2*,7.5,$ $7.6*,7.7*)$& $(\overline{\cdots b_{4g-1}\cdots b_{2g+1}},$ $\overline{b_{2g}\cdots b_{r}(b_{2g+s-2}\cdots b_r)^{t_1}b_{r-1}})$&

$\overline{\cdots b_{2g+1}\cdots b_{2g+r-2}}$ $\cdot\overline{(b_{r}\cdots b_{2g+r-2})^{t_1}}$&

$S_{(2,4g-r)},$ $S_{(i_2)}, S_{(3,t_1)}$&
$t_1\geq 2;$ $2\leq r\leq 2g-1;$ 
$b_{2g+r-2}\prec b_{r-1}$\\
\hline
$(6);(7.3,7.6,$ $7.8,7.9*)$& $(\overline{\cdots b_{4g-1}\cdots b_{2g+1}},$ $\overline{b_{2g}\cdots b_{r}(b_{2g+r-2}\cdots b_r)^{t_1}})$&

$\overline{\cdots b_{2g+1}\cdots b_{2g+r-2}}$ $\cdot\overline{(b_{r}\cdots b_{2g+r-2})^{t_1}b_{2g+r-1}}$&

$S_{(2,4g-r)},$ $S_{(i_2)}, ~S_{(4,t_1)}$&
$2\leq r\leq 2g-1;$
$b_{2g+r-1}\succ b_{r}$\\
\hline
$(7);(7.4,7.7,$ $7.9,7.10)$& $(\overline{\cdots b_{4g-1}\cdots b_{2g+1}},$ $\overline{b_{2g}\cdots b_{r}})$&

$\overline{\cdots b_{2g+1}\cdots b_{2g+r-1}}$&

$S_{(2,4g-r)},$ $S_{(i_2)}, ~-$ &

$2\leq r\leq 2g-1$\\
\hline
\hline
$(8);(7.10)$ &$(\overline{b_{2g+1}}, ~\overline{b_{2g+2}\cdots b_{4g}b_1})$&
$\overline{b_{4g}\cdots b_{2g+2}}$&
$S_{(2,2g+1)}, -, -$&
$b_{2g+2}\prec b_1$\\
\hline
$(9);(6.1)$&$(\overline{b_{2g+1}}, ~\overline{(b_{2g+2}\cdots b_{4g})^{t_1}b_1})$&
$\overline{(b_{4g}\cdots b_{2g+2})^{t_1}}$&
$\quad S_{(3,t_1)}, -,-$& $t_1\geq 2;$
$b_{2g+2}\prec b_1$\\
\hline
    \end{tabular}
    \caption{All the possible reductions of $(\overline{X^{-1}},\overline{U})$ for \textbf{Case (2)}. Item $(i);(m.n,p.q*)$ means that this is the $i$-th item, corresponding to Item $n$ in \cite[Table $m$]{WZZ25} and the symmetric case of Item $q$ in \cite[Table $p$]{WZZ25}. ``$S_{(i_j)}=-$" means that it does not exist.} 
    \label{tab: lemma case 2}
\end{table}

We now show that Table \ref{tab: lemma case 2} can not appear in Lemma \ref{lem observations only case of 4g-1} by checking it item-by-item.

\begin{enumerate}
 \item If items (1--3)(8--9) appear, then $\nf(X_0U_0)=\overline{\cdots b_{4g-2}},~\overline{\cdots b_{4g-1}}\mbox{ or }\overline{\cdots b_{2g+2}},$ contradicting Eq. (\ref{eq. s=1,2g}).
\item If items (4--5) appear, then combining Eq. (\ref{eq. s=1,2g}), we have $r=3,s=1$ and the irreducible word
    $$\overline{U}=\overline{\underbrace{\cdots b_2}_{U_0}\underbrace{b_{2g+2}\cdots b_{4g}}_{U_1}}$$
    is reducible, which is contradictory. 
\item If items (6--7) appear, then we have the same contradiction as that of item (4).
\end{enumerate}

In conclusion, we have finished the discussion of Case (1) and Case (2) and thus also finish the proof.
\end{proof}

\subsection{Existence of $(4g-1)$-$\llfr$} To handle the case that ``$\overline{X^{-1}UX}$ contains $(4g-1)$-$\llfr$s'', we have the following three lemmas.


\begin{lem}\label{lem 4g-1-reduction 1}
    Let $\overline{X}$ and $\overline{U}$ be two irreducible words such that $\overline{X^{-1}UX}$ is freely reduced. If $\overline{U}$ is cyclically irreducible and $\overline{U}\neq \overline{b_1\cdots b_{2g-1}}$ for any $\overline{b_1\cdots b_{4g}}\in\RR$, then every $\llfr$ in $\overline{X^{-1}UX}$ either starts in $\overline{X^{-1}}$ and ends in $\overline{U}$, or symmetrically, starts in $\overline{U}$ and ends in $\overline{X}$. Moreover, there is at most one $(4g-1)$-$\llfr$ in $\overline{X^{-1}UX}$.
\end{lem}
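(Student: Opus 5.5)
We have to prove the statement about $\overline{X^{-1}UX}$ with $\overline{U}$ cyclically irreducible. The plan is to first locate every $\llfr$ relative to the three blocks $\overline{X^{-1}}$, $\overline{U}$, $\overline{X}$, and then bound the number of $(4g-1)$-$\llfr$s by showing that two of them would overlap inside $\overline{U}$ in an impossible way.

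\textbf{Step 1.} We read ``$\llfr$'' here as a fractional relator that is relevant to reduction, i.e.\ one lying across a junction. A $k$-fractional relator lying entirely inside one of the irreducible words $\overline{X^{-1}}$, $\overline{U}$, $\overline{X}$ is harmless for $k\leq 2g$, but it must have $k\leq 2g$: if $k\geq 2g+1$ it is the leading word of an $S_{(2,k)}$-polynomial, contradicting irreducibility via the lemma that $S$-reduced is equivalent to irreducible. So the claim concerns $\llfr$s meeting a junction.

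\textbf{Step 2.} We rule out an $\llfr$ that starts in $\overline{X^{-1}}$, runs through all of $\overline{U}$, and ends in $\overline{X}$. Such a fractional relator $\overline{Z}$ has length at most $4g$, so it contains the whole of $\overline{U}$ as an interior subword. Then $\overline{U}$ is a subword of some word in $\RR$ of length at most $4g-2$.

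Now use cyclic irreducibility. Suppose $\overline{U}=\overline{b_i\cdots b_{i+m-1}}$ with $m\leq 4g-2$. Its cyclic permutation $\overline{b_{i+j}\cdots b_{i+m-1}b_i\cdots b_{i+j-1}}$ must be irreducible. Consider the junction $\overline{b_{i+m-1}b_i}$. Since $b_k=b_{k\pm 2g}^{-1}$, the cyclic word is not freely reduced when $m=2g+1$. For $m\geq 2g$ it contains an $S_{(2,\cdot)}$ or $S_{(4,\cdot)}$ leading word. For small $m$ one checks directly which cyclic words are reducible.

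The hypothesis $\overline{U}\neq\overline{b_1\cdots b_{2g-1}}$ enters precisely here. When $m=2g-1$, the two letters of $\overline{Z}$ adjacent to $\overline{U}$ complete a $(2g+1)$-relator; I expect this to be exactly the configuration of $W_1,W_2$-type cancellation, and the hypothesis excludes it. I would organize this check by the length $m$ and use the two identities (1)--(2) following the definition of $\llfr$. This case analysis, especially the range $m\leq 2g-2$, is the main obstacle. For $m\leq 2g-2$ the first thing to try is the observation that $\overline{Z}$ extending $\overline{U}$ on both sides yields a relator piece of length $\geq m+2$. Combined with free reduction of $\overline{X^{-1}UX}$ this should still force a contradiction, or else force the $\llfr$ not to be locally longest.

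\textbf{Step 3.} Prove the uniqueness of a $(4g-1)$-$\llfr$. By Step 2, each $(4g-1)$-$\llfr$ crosses exactly one junction. Suppose two of them exist. They cannot both cross the same junction, since a maximal fractional relator through a given two-letter junction is unique up to the locally-longest condition. So one crosses $\overline{X^{-1}}\,|\,\overline{U}$ and the other crosses $\overline{U}\,|\,\overline{X}$.

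Their parts in $\overline{U}$ are a prefix $\overline{P}$ and a suffix $\overline{Q}$. Each has length at most $2g$, because a longer piece inside $\overline{U}$ would be of type $S_{(2)}$. By symmetry, the pieces in $\overline{X^{-1}}$ and $\overline{X}$ are mutually inverse reversed words.

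Comparing the $(4g-1)$-relator structures on both sides shows that $\overline{X^{-1}}$ ends with the inverse of the relator segment that $\overline{X}$ begins with. Then the cyclic word $\overline{U}$ would contain $\overline{QP}$ as a fractional relator of length $\geq 2g+1$. This contradicts cyclic irreducibility unless $|\overline{U}|$ is small, and that small case again reduces to $\overline{U}=\overline{b_1\cdots b_{2g-1}}$, which is excluded.
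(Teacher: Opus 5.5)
Your overall plan has the same shape as the paper's: first exclude an $\llfr$ that straddles all of $\overline{U}$, then place two $(4g-1)$-$\llfr$s at opposite junctions and contradict cyclic irreducibility. However, Step 2 has a genuine gap and Step 3 ends with a wrong mechanism.

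\textbf{Step 2.} You try to exclude an $\llfr$ $\overline{Z}$ running from $\overline{X^{-1}}$ across all of $\overline{U}$ into $\overline{X}$ using the cyclic irreducibility of $\overline{U}$, case by case in $m=|\overline{U}|$. That cannot succeed, because many fractional relators are cyclically irreducible:
\begin{itemize}
\item $\overline{c_1c_2}$ is one, so your range $m\le 2g-2$ stays open.
\item $\overline{c_{2g}c_{2g-1}\cdots c_1}$ is another. It is irreducible since $c_{2g}\prec c_1$, and every other cyclic permutation contains the non-relator pair $\overline{c_1c_{2g}}$. So your claim that for $m\ge 2g$ the cyclic word contains an $S_{(2)}$- or $S_{(4)}$-leading word is false at $m=2g$.
\item The range $m\ge 2g+1$ is already excluded by irreducibility of $\overline{U}$, so it needs no separate treatment.
\end{itemize}
The missing idea is that the two letters of $\overline{Z}$ flanking $\overline{U}$ are $x_1^{-1}$ and $x_1$, which are mutually inverse. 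The $4g$ letters of a word in $\RR$ are pairwise distinct and $b_k^{-1}=b_{k\pm 2g}$. Writing $\overline{x_1^{-1}Ux_1}=\overline{b_{4g}b_1\cdots b_rb_{r+1}}$ with $r\le 2g$ gives $b_{r+1}=b_{4g}^{-1}=b_{2g}$, hence $r=2g-1$. That is exactly the excluded case, and it settles every $m$ at once; this is the paper's entire argument for the first claim. Your remark that free reduction of $\overline{X^{-1}UX}$ ``should still force a contradiction'' never reaches this point.

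\textbf{Step 3.} The word $\overline{QP}$ is never a fractional relator, because the two $(4g-1)$-$\llfr$s run in opposite orientations. Take the split $\overline{UX}=\overline{\cdots b_{2g+2}\cdots b_{4g}\,b_1\cdots b_{2g}\cdots}$. Then $\overline{X^{-1}}$ ends with $\overline{b_{4g}\cdots b_{2g+1}}$. Suppose the $\llfr$ at the junction of $\overline{X^{-1}}$ and $\overline{U}$ continued forward as $\overline{b_{2g+1}b_{2g+2}\cdots}$. It would contain only one letter of $\overline{X^{-1}}$, since the preceding letter is $b_{2g+2}$ rather than $b_{2g}$. It would contain at most $2g$ letters of $\overline{U}$. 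So its length would be at most $2g+1<4g-1$.

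Hence it must continue in the reversed relator, i.e.\ $\overline{U}$ begins with $b_{2g}$. But $\overline{U}$ ends with $b_{4g}=b_{2g}^{-1}$. So the cyclic word $\overline{U}$ is not even freely reduced, and $\overline{QP}$ contains the cancelling pair $\overline{b_{4g}b_{2g}}$ rather than a long relator piece. The other split, with $2g$ letters in $\overline{U}$ and $2g-1$ in $\overline{X}$, is identical. Only the first and last letters of $\overline{U}$ are used, so no ``small $|\overline{U}|$'' case arises. In particular, nothing here reduces to $\overline{U}=\overline{b_1\cdots b_{2g-1}}$, as you assert.
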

\begin{proof}
Let $\overline{X}=\overline{x_1\cdots x_n}$. If $\overline{X^{-1}UX}$ contains an $\llfr$ starting in $\overline{X^{-1}}$ and ending in $\overline{X}$, then $\overline{x_1^{-1}Ux_1}$ is also a fractional relator. This implies that $\overline{U}=\overline{b_1\cdots b_r}$, $x^{-1}=b_{4g}$ and $x_1=b_{r+1}$ for some $\overline{b_1\cdots b_{4g}}\in\RR$ and $1\leq r\leq 2g$. 
It follows that $b_{r+1}=b_{4g}^{-1}=b_{2g}$ and hence $r=2g-1$, contradicting the hypothesis $\overline{U}\neq\overline{b_1\cdots b_{2g-1}}$. Hence, every $\llfr$ in $\overline{X^{-1}UX}$ either starts in $\overline{X^{-1}}$ and ends in $\overline{U}$, or symmetrically, starts in $\overline{U}$ and ends in $\overline{X}$.

We now assume that there exist two $(4g-1)$-$\llfr$s in $\overline{X^{-1}UX}$, denoted by $\overline{L_1}$ and $\overline{L_2}$, such that $\overline{L_1}\subset\overline{X^{-1}U}$ and $\overline{L_2}\subset\overline{UX}$. Since both $\overline{X}$ and $\overline{U}$ are irreducible, the length of the common subword of $\overline{L_2}$ and $\overline{U}$ is either $2g-1$ or $2g$. Accordingly, there are two possible cases for $\overline{L_2}$: 

\textbf{Case (1).} $\overline{UX}=\overline{\underbrace{\cdots b_{2g+2}\cdots b_{4g}}_{U}\underbrace{b_1\cdots b_{2g}\cdots }_{X}}$ for some $\overline{b_1\cdots b_{4g}}\in\RR$. Then $\overline{X^{-1}}=\overline{\cdots b_{4g}\cdots b_{2g+1}}$. Since $\overline{L_1}\subset\overline{X^{-1}U}$, we have $$\overline{X^{-1}UX}=\overline{\underbrace{\cdots b_{4g}\cdots b_{2g+1}}_{X^{-1}}\underbrace{b_{2g}\cdots b_1\cdots}_{U}X}.$$
Therefore, $\overline{U}=\overline{b_{2g}\cdots b_{4g}}$ contradicting the hypothesis that $\overline{U}$ is cyclically irreducible.
    
    \textbf{Case (2).} $\overline{UX}=\overline{\underbrace{\cdots b_{2g+1}\cdots b_{4g}}_{U}\underbrace{b_1\cdots b_{2g-1}\cdots}_{X} }$ for some $\overline{b_1\cdots b_{4g}}\in\RR$. Then we can obtain a contradiction by an argument analogous to that in Case (1).
 
Hence, there are at most one $(4g-1)$-$\llfr$ in $\overline{X^{-1}UX}$.
\end{proof}

\begin{lem}\label{lem 4g-1-reduction 2}
  Let $\overline{X}$ and $\overline{U}$ be two irreducible words such that $\overline{X^{-1}UX}$ is freely reduced. Suppose $\overline{U}$ is cyclically irreducible and $\overline{U}\neq\overline{(b_1\cdots b_{2g-1})^{t}}$ for any $\overline{b_1\cdots b_{4g}}\in\RR$ and $t\geq 1$. If there is a $(4g-1)$-$\llfr$ in $\overline{X^{-1}UX}$, then there exists two irreducible words $\overline{Y}$ and $\overline{V}$ such that the followings hold:
  \begin{enumerate}
      \item [(a)] $\overline{V}$ is a cyclic permutation of $\overline{U}$.
      \item [(b)] $X^{-1}UX=Y^{-1}VY$ as group elements.
      \item [(c)] $\overline{Y^{-1}VY}$ is freely reduced and has no $(4g-1)$-$\llfr$.
  \end{enumerate}
\end{lem}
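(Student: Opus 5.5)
The plan is to push the $(4g-1)$-$\llfr$ across the junction by absorbing part of $\overline{X}$ into a cyclic permutation of $\overline{U}$, and to induct on $|X|$. First, Lemma \ref{lem 4g-1-reduction 1} applies, because $\overline{U}\neq\overline{(b_1\cdots b_{2g-1})^t}$ includes the case $t=1$. So every $\llfr$ of $\overline{X^{-1}UX}$ lies across exactly one junction, and there is a unique $(4g-1)$-$\llfr$. By the symmetry $X^{-1}UX\leftrightarrow$ the inverse/reversed word, I may assume it sits at the junction $\overline{UX}$.

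Since $\overline{U}$ and $\overline{X}$ are irreducible, the overlap of this $\llfr$ with $\overline{U}$ has length $2g-1$ or $2g$, exactly as in the proof of Lemma \ref{lem 4g-1-reduction 1}. This gives two cases.

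\textbf{Case (1).} Here $\overline{U}=\overline{U'b_{2g+2}\cdots b_{4g}}$ and $\overline{X}=\overline{b_1\cdots b_{2g}X''}$. I would rewrite using the relator:
$$b_{2g+2}\cdots b_{4g}b_1\cdots b_{2g}=b_{2g+1}^{-1}\cdots \text{(complementary piece)}.$$
Concretely, let $p=\overline{b_1}$ be the first letter of $\overline{X}$. Set $\overline{V}$ to be the cyclic permutation $p^{-1}\cdot\overline{U}\cdot p$, after checking that $\overline{U}$ begins with $p$. Then set $\overline{Y}=\overline{X}$ with its first letter removed, so that $Y^{-1}VY=X^{-1}UX$ holds automatically.

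The key point is that $\overline{U}$ must begin with $b_1=b_{2g+1}^{-1}$. I would derive this from two facts: the $\llfr$ is locally longest, and $\overline{U}$ is cyclically irreducible. Cyclic irreducibility forbids $\overline{U}$ from ending in a long fractional relator unless its start continues it, so that $\overline{U_{\text{last}}\,U_{\text{first}}}$ stays within the relator pattern. Otherwise I would try to show that the cyclic word $\overline{U}$ contains a leading word of $S_{(2,k)}$ or $S_{(4,t)}$, which is a contradiction. If this fails, the alternative move is to conjugate by the suffix $\overline{b_{2g+2}\cdots b_{4g}}$ instead, replacing $\overline{X}$ by $\overline{b_{4g}^{-1}\cdots b_{2g+2}^{-1}X}=\overline{b_{2g}\cdots b_{2}X}$ reduced appropriately.

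\textbf{Case (2).} Here the overlap with $\overline{U}$ is $\overline{b_{2g+1}\cdots b_{4g}}$ and $\overline{X}=\overline{b_1\cdots b_{2g-1}\cdots}$. This is handled analogously.

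Next I verify (a)--(c). Part (a) holds by construction. Part (b) is the group identity. For (c), $\overline{V}$ is irreducible because $\overline{U}$ is cyclically irreducible. $\overline{Y}$ is irreducible as a subword of $\overline{X}$. Free reduction of $\overline{Y^{-1}VY}$ follows from free reduction of $\overline{X^{-1}UX}$ together with irreducibility of $\overline{X}$, since $\overline{x_1x_2}$ is not a cancelling pair.

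The remaining issue is that $\overline{Y^{-1}VY}$ might again have a $(4g-1)$-$\llfr$. Since $|Y|<|X|$, I would iterate. Lemma \ref{lem 4g-1-reduction 1} still applies at each stage, because cyclic permutations preserve both hypotheses on $\overline{U}$, and the hypothesis $\overline{U}\neq\overline{(b_1\cdots b_{2g-1})^{t}}$ is invariant under cyclic permutation up to the same family. The iteration terminates because $|X|$ strictly decreases. If $\overline{Y}$ becomes empty, then $\overline{V}$ has no such $\llfr$ at all.

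The main obstacle will be the forced step, namely showing that the new junction does not simply regenerate a $(4g-1)$-$\llfr$ forever in the periodic situation. This is exactly where the excluded words $\overline{(b_1\cdots b_{2g-1})^t}$ arise: they are the powers on which conjugation by $b_{2g}$-type letters cycles. I would handle it by tracking the $\llfr$ positions relative to $\overline{U}$ and showing that a perpetual recurrence forces $\overline{U}$ to be a power of a $(2g-1)$-fractional relator.
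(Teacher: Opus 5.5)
Your central move is never available. Conjugating by the first letter $p$ of $\overline{X}$ gives a cyclic permutation of $\overline{U}$ only when $\overline{U}$ begins with $p$ or ends with $p^{-1}$. In your Case (1) you have $p=b_1$, so $\overline{X^{-1}}$ ends in $b_1^{-1}$.
\begin{itemize}
\item If $\overline{U}$ began with $b_1$, then $\overline{X^{-1}U}$ would contain $\overline{b_1^{-1}b_1}$. This contradicts the hypothesis that $\overline{X^{-1}UX}$ is freely reduced.
\item $\overline{U}$ cannot end with $b_1^{-1}=b_{2g+1}$ either, because it ends in $b_{4g}\neq b_{2g+1}$.
\end{itemize}
So the ``key point'' you hope to get from cyclic irreducibility is actually excluded by the hypotheses. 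Absorbing conjugator letters into a cyclic permutation one at a time is a free-group argument, and it does not transfer here. A $(4g-1)$-$\llfr$ is an overlap with the relator, not a cancellation, and even a one-letter shift would only move the fractional relator rather than destroy it. Your verification of (c) is tied to this nonexistent construction: you argue that $\overline{Y}$ is irreducible because it is a subword of $\overline{X}$, and that free reduction holds because $\overline{x_1x_2}$ is not cancelling.

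What the lemma needs is a relator-based exchange. Your fallback points toward it, but with the conjugator inverted. Moving the suffix $\overline{S}=\overline{b_{2g+2}\cdots b_{4g}}$ of $\overline{U}=\overline{U'S}$ to the front gives $V=SUS^{-1}$. So the new conjugator is $SX$, not $S^{-1}X$, and in $G$ it equals $b_{2g+2}\cdots b_{4g}b_1\cdots b_{2g}X''=b_{2g+1}^{-1}X''=b_1X''$.

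The paper does this in one step, moving the whole maximal periodic block. With the $\llfr$ at $\overline{X^{-1}U}$, it writes $\overline{X}=\overline{(b_{2g}\cdots b_2)^{t_0}X_1}$ and $\overline{U}=\overline{b_1(b_2\cdots b_{2g})^{t_0}U_1}$ with $t_0$ maximal. It then sets $\overline{V}=\overline{U_1b_1(b_2\cdots b_{2g})^{t_0}}$ and $\overline{Y}=\nf(b_{2g+1}X_1)$, using $(b_{4g}\cdots b_{2g+2})^{t_0}b_{2g+1}(b_{2g}\cdots b_2)^{t_0}=b_{2g+1}$. The other configuration is handled analogously with $\overline{V}=\overline{U_1(b_2\cdots b_{2g})^{t_0}}$ and $\overline{Y}=\overline{b_{2g+1}X_1}$.

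Now $\overline{Y}$ is not a subword of $\overline{X}$, so three things must be supplied:
\begin{itemize}
\item Its irreducibility needs Lemma \ref{claim 2} or \cite[Lemma 3.2]{WZZ25}.
\item Its normal form can be $\overline{(b_2\cdots b_{2g})^{t_1}b_{2g+1}X_2}$ rather than $\overline{b_{2g+1}X_1}$.
\item The substance of the proof is a case check that the new junctions are freely reduced and contain no $(4g-1)$-$\llfr$.
\end{itemize}
That check uses the maximality of $t_0$, the irreducibility of $\overline{X}$, the cyclic irreducibility of $\overline{U}$, and the hypothesis $\overline{U}\neq\overline{(b_1\cdots b_{2g-1})^t}$. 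That hypothesis serves to exclude degenerate configurations, such as $\overline{U}$ being a pure power $\overline{(b_2\cdots b_{2g})^{t}}$; it is not there to stop an infinite loop. None of this appears in your proposal.

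If you instead iterate one period at a time, you must redo all these checks at every stage. You also cannot simply re-invoke Lemma \ref{lem 4g-1-reduction 1}. The excluded family is not closed under cyclic permutation, since for $g\geq 2$ the word $\overline{b_{2g-1}b_1}$ is not a fractional relator. So the hypothesis on $\overline{U}$ need not pass to $\overline{V}$.
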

\begin{proof}
According to Lemma \ref{lem 4g-1-reduction 1}, $\overline{X^{-1}UX}$ has exactly one $(4g-1)$-$\llfr$. Without loss of generality, we suppose it is at the junction of $\overline{X^{-1}}$ and $\overline{U}$. Then, according to \cite[Table 8]{WZZ25}, we have the following two cases:

\textbf{Case (1).}  $\overline{X^{-1}}=\overline{X_1^{-1}(b_{2g+2}\cdots b_{4g})^{t_0}}$ and $\overline{UX}=\overline{b_1(b_2\cdots b_{2g})^{t_0}Z}$ with $t_0\geq 1$ maximal and $b_1\prec b_{2g}$ for some irreducible words $\overline{X_1}$ and $\overline{Z}$. Then the irreducible word
\begin{equation}\label{eq. X for case 1}
   \overline{X}=\overline{(b_{2g}\cdots b_2)^{t_0}X_1} 
\end{equation}
for $\overline{X_1}\neq \overline{(b_{2g}\cdots b_2)^{t'}b_{1}\cdots} ~(t'\geq 0)$. We have two subcases:

\textbf{Subcase (1.1).} If $\overline{X}\nsubseteq\overline{Z}$, then we have 
\begin{equation}\label{eq U for subcase 1.1}
    \overline{UX}=\overline{\underbrace{b_1(b_2\cdots b_{2g})^{{t_0}-1}b_2\cdots b_{2g-1}}_{U}\underbrace{b_{2g}Z}_X}.
\end{equation}
Since $\overline{U}\neq \overline{b_1\cdots b_{2g-1}}$, we have $t_0\geq 2$. Let 
\begin{equation}\label{eq. V Y in 1.1}
    \overline{V}:=\overline{b_2\cdots b_{2g-1}b_1(b_2\cdots b_{2g})^{t_0-1}}, \qquad \overline{Y}:=\nf(b_{2g+1}b_{2g}\cdots b_2X_1).
\end{equation} 
Since $\overline{b_{2g}\cdots b_2X_1}\subset \overline{X}=\overline{b_{2g}Z}$ is irreducible, by \cite[Lemma 3.2]{WZZ25}, we have two cases for $\overline{Y}$:

   (1) If $b_{2g+1}\succ b_2$, then the irreducible word $\overline{X_1}$ can be written as
    $\overline{X_1}=\overline{(b_{2g}\cdots b_2)^{t_1-1}X_2}$ with $t_1\geq 1$ maximal for some irreducible word $\overline{X_2}\neq\overline{b_1\cdots}$. Accordingly,  by item (4) of the second table in \cite[Lemma 3.2]{WZZ25}, we have
    $$\overline{Y}=\nf(b_{2g+1}b_{2g}\cdots b_2X_1)=\overline{(b_2\cdots b_{2g})^{t_1}b_{2g+1}X_2}.$$
    
    (2) If $b_{2g+1}\prec b_2$, then by item (5) of the second table in \cite[Lemma 3.2]{WZZ25}, we have
    $$\overline{Y}=\nf(b_{2g+1}b_{2g}\cdots b_2X_1)=\overline{b_{2g+1}b_{2g}\cdots b_2X_1}.$$

Then, by Eqs. (\ref{eq. X for case 1})(\ref{eq U for subcase 1.1})(\ref{eq. V Y in 1.1}), we can verify that $\overline{V}$ and $\overline{Y}$ satisfy the required conditions (a--c). 

    \textbf{Subcase (1.2).} If $\overline{X}\subseteq\overline{Z}$, then the irreducible word
    \begin{equation}\label{eq U for subcase 1.2}
        \overline{U}=\overline{b_1(b_2\cdots b_{2g})^{t_0}U_1}.
    \end{equation}
    Denote 
    \begin{equation}\label{eq Y V  in subcase 1.2}
        \overline{V}:=\overline{U_1b_1(b_2\cdots b_{2g})^{t_0}}, \qquad \overline{Y}:=\nf(b_{2g+1}X_1).
    \end{equation} 
See Figure \ref{fig: Case 1.2 for 4g-1} for this process.
\begin{figure}
    \centering
    \includegraphics[width=0.75\linewidth]{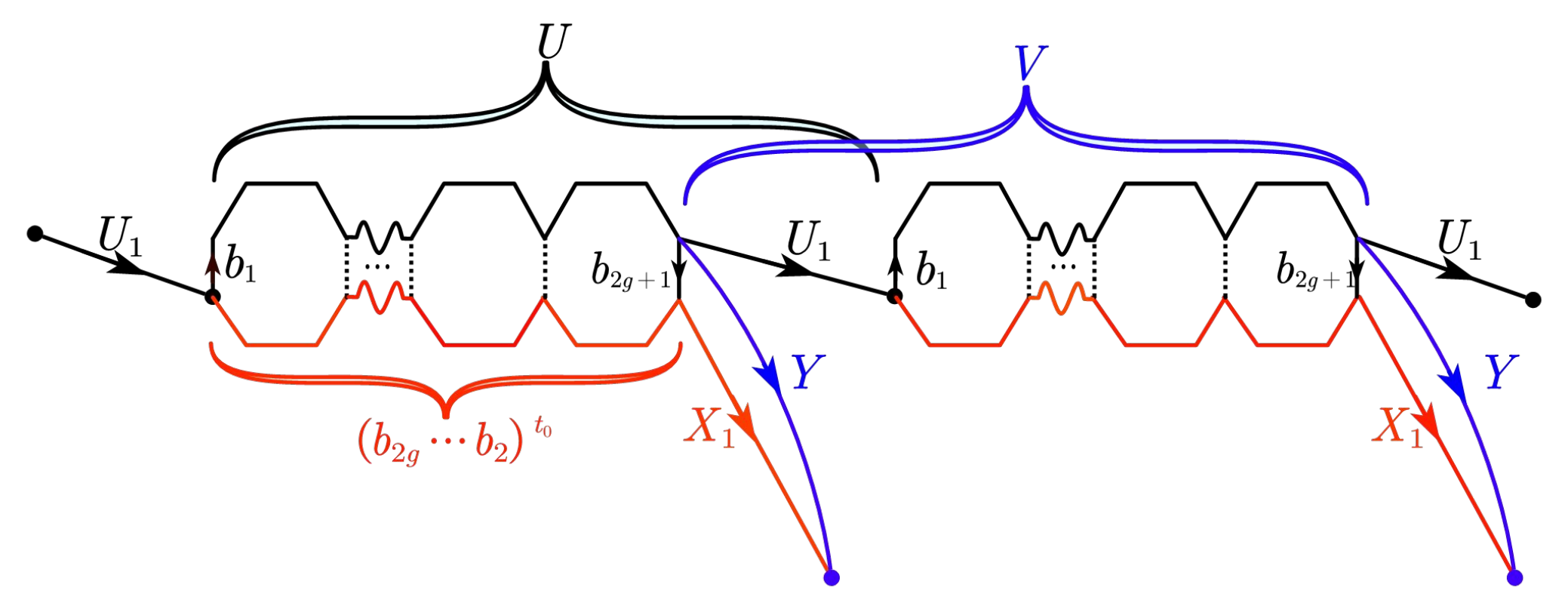}
    \caption{In Subcase (1.2), $\overline{X^{-1}}=\overline{X_1^{-1}(b_{2g+2}\cdots b_{4g})^{t_0}}$ and $\overline{U}=\overline{b_1(b_{2}\cdots b_{2g})^{t_0} U_1}$. Observe that the two paths corresponding to $\overline{X^{-1}UX}$ and $\overline{Y^{-1}VY}$ share common endpoints in $\mathbf{H}^2$. Hence,  as group elements, $X^{-1}UX=Y^{-1}VY$ in the surface group $G$ with the symmetric presentation.}
    \label{fig: Case 1.2 for 4g-1}
\end{figure} 
Since $\overline{X_1}\neq \overline{(b_{2g}\cdots b_2)^{t'}b_{1}\cdots} (t'\geq 0)$ and $\overline{b_{2g}\cdots b_2X_1}$ is irreducible, by \cite[Lemma 3.2]{WZZ25}, we have two cases for $\overline{Y}$:

   (1) If $b_{2g+1}\succ b_2 \mbox{ and }\overline{X_1}=\overline{(b_{2g}\cdots b_2)^{t_1}X_2}$ with $t_1\geq 1$ maximal, then \begin{equation}\label{eq Y for subcase 1.2.1}
        \overline{Y}=\overline{(b_2\cdots b_{2g})^{t_1}b_{2g+1}X_2}.
    \end{equation}
    Now combining Eq. (\ref{eq Y V  in subcase 1.2}) with Eq. (\ref{eq Y for subcase 1.2.1}), we have $$\overline{Y^{-1}VY}=\overline{\underbrace{X_2^{-1}b_1(b_{4g}\cdots b_{2g+2})}_{Y^{-1}}\underbrace{U_1b_1(b_2\cdots b_{2g})^{t_0}}_{V}\underbrace{(b_2\cdots b_{2g})^{t_1}b_{2g+1}X_2}_Y}.$$
    Since $\overline{U}=\overline{b_1(b_2\cdots b_{2g})^{t_0}U_1}$ is irreducible, we have $\overline{U_1}\neq \overline{b_{2g+1}\cdots }$. Then, by Eqs. (\ref{eq. X for case 1})(\ref{eq U for subcase 1.2}) (\ref{eq Y V  in subcase 1.2})(\ref{eq Y for subcase 1.2.1}), we can directly verify that these $\overline{V}$ and $\overline{Y}$ satisfy the required conditions (a--c).
    
   (2) If $b_{2g+1}\prec b_2$ or $\overline{X_1}\neq \overline{b_{2g}\cdots b_2\cdots}$, then by \cite[Lemma 3.2]{WZZ25},
    \begin{equation}\label{eq Y for subcase 1.2.2}
        \overline{Y}=\overline{b_{2g+1}X_1},
    \end{equation}
    and $$\overline{Y^{-1}VY}=\overline{\underbrace{X_1^{-1}b_1}_{Y^{-1}}\underbrace{U_1b_1(b_2\cdots b_{2g})^{t_0}}_{V}\underbrace{b_{2g+1}X_1}_Y}.$$
  Furthermore, by Eqs. (\ref{eq. X for case 1})(\ref{eq U for subcase 1.2}), we have $\overline{X_1}\neq \overline{b_{2g+2}\cdots}$ and $\overline{U_1}\neq \overline{b_{4g}\cdots }$.
  Since $t_0$ is maximal, we have
  $$\overline{X_1^{-1}}=\overline{\cdots b_{2g+2}\cdots b_{4g}}\quad \mbox{and} \quad \overline{U_1}=\overline{b_2\cdots b_{2g}\cdots }$$ can not hold simultaneously.
  Then, combining Eqs. (\ref{eq. X for case 1})(\ref{eq U for subcase 1.2})(\ref{eq Y V  in subcase 1.2})(\ref{eq Y for subcase 1.2.2}), we can directly verify that $\overline{V}$ and $\overline{Y}$ satisfy the required conditions (a--c). 
    
Therefore, we have finished the discussion for Case (1).\\

\textbf{Case (2).} $\overline{X^{-1}}=\overline{X_1^{-1}(b_{2g+2}\cdots b_{4g})^{t_0}b_1}$ and $\overline{UX}=\overline{(b_{2}\cdots b_{2g})^{t_0} Z}$ with $t_0\geq 1$ maximal and $b_{2g+2}\prec b_1$ for some irreducible words $\overline{X_1}$ and $\overline{Z}$. Then the irreducible words
\begin{equation}\label{eq X U for case 2}
    \overline{X}=\overline{b_{2g+1}(b_{2g}\cdots b_2)^{t_0}X_1},\qquad \overline{U}=\overline{(b_2\cdots b_{2g})^{t_0}U_1}.
\end{equation}
By the hypothesis  $\overline{U}\neq \overline{(b_1'\cdots b_{2g-1}')^t}$ for any $\overline{b'_1\cdots b'_{4g}}\in\RR$ and any $t\geq 1$, we have $\overline{U_1}\neq \overline{(b_{2}\cdots b_{2g})^{t'}}$ for any $t'\geq 0$. Denote
\begin{equation}\label{eq Y V for case 2}
    \overline{V}:=\overline{U_1(b_2\cdots b_{2g})^{t_0}}, \qquad \overline{Y}:=\overline{b_{2g+1}X_1}.
\end{equation}
See Figure \ref{fig: Case 2 for 4g-1} for this process.
\begin{figure}
    \centering
    \includegraphics[width=0.75\linewidth]{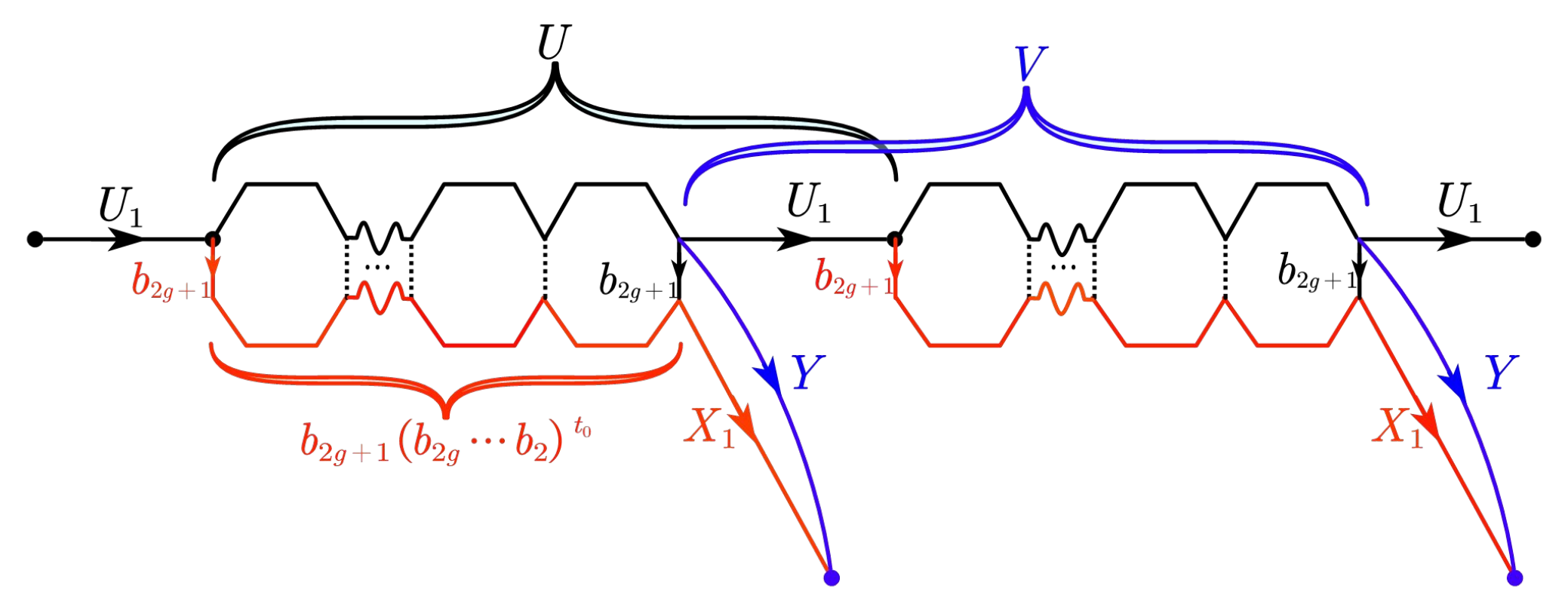}
    \caption{In Case (2), $\overline{X^{-1}}=\overline{X_1^{-1}(b_{2g+2}\cdots b_{4g})^{t_0}b_1}$ and $\overline{U}=\overline{(b_{2}\cdots b_{2g})^{t_0} U_1}$. Observe that the two paths corresponding to $\overline{X^{-1}UX}$ and $\overline{Y^{-1}VY}$ share common endpoints in $\mathbf{H}^2$. Hence, as group elements, $X^{-1}UX=Y^{-1}VY$ in the surface group $G$ with the symmetric presentation.}
    \label{fig: Case 2 for 4g-1}
\end{figure} 
Since $\overline{U}$ is cyclically irreducible, we obtain the irreducibility of $\overline{V}$. Furthermore,  by Lemma \ref{claim 2}, $\overline{Y}$ is also irreducible. 
We now show that

$$\overline{Y^{-1}VY}=\overline{\underbrace{X_1^{-1}b_1}_{Y^{-1}}\underbrace{U_1(b_2\cdots b_{2g})^{t_0}}_{V}\underbrace{b_{2g+1}X_1}_{Y}}$$
satisfies conclusion (c) in the following. Indeed,

(1) If $\overline{Y^{-1}VY}$ is not freely reduced, then $\overline{U_1}=\overline{b_{2g+1}\cdots}$ and thus $\overline{U}=\overline{(b_2\cdots b_{2g})^{t_0}b_{2g+1}\cdots}$. Since $b_{2g+2}\prec b_1$, we have $b_2\succ b_{2g+1}$ and thus $\overline{U}$ is reducible by $S_{(4,t_0)}$, which is contradictory.

 (2) If $\overline{Y^{-1}VY}$ contains a $(4g-1)$-$\llfr$ (denoted by $\overline{K}$), then by Lemma \ref{lem 4g-1-reduction 1}, $\overline{K}$ either starts in $\overline{Y^{-1}}$ and ends in $\overline{V}$ or starts in $\overline{V}$ and ends in $\overline{Y}$. If $\overline{K}$ starts in $\overline{V}$ and ends in $\overline{Y}$, then $\overline{X_1}=\overline{b_{2g+2}\cdots }$ and thus by Eq. (\ref{eq X U for case 2}), the irreducible word
 $$\overline{X}=\overline{b_{2g+1}(b_{2g}\cdots b_2)^{t_0}b_{2g+2}\cdots }$$ is reducible, which is contradictory. If $\overline{K}$ starts in $\overline{Y^{-1}}$ and ends in $\overline{V}$, then by Eq. (\ref{eq X U for case 2}), we have $\overline{U_1}\neq \overline{b_{4g}\cdots }$. Furthermore, since both $\overline{Y^{-1}}$ and $\overline{V}$ are irreducible, the length of the common subword of $\overline{K}$ and $\overline{V}$ is either $2g-1$ or $2g$. Accordingly, there are two possible cases:
  
\begin{enumerate}
        \item [(i)] $\overline{Y^{-1}}=\overline{\cdots b_{2g+3}\cdots b_{4g}b_1},\quad \overline{V}=\overline{b_2\cdots b_{2g}b_{2g+1}\cdots}$.
        
        \item [(ii)]  $\overline{Y^{-1}}=\overline{\underbrace{\cdots b_{2g+2}\cdots b_{4g}}_{X_1^{-1}}b_1},\quad \overline{V}=\overline{\underbrace{b_2\cdots b_{2g}\cdots }_{U_1}(b_2\cdots b_{2g})^{t_0}}$.
\end{enumerate}
However, as demonstrated below, both cases lead to a contradiction: by Eqs. (\ref{eq X U for case 2})(\ref{eq Y V for case 2}), in Case (i), the cyclically irreducible word $\overline{U}=\overline{(b_2\cdots b_{2g})^{t_0}\underbrace{b_2\cdots b_{2g}b_{2g+1}\cdots}_{U_1}}$ is reducible by $S_{(4,t_0+1)}$; in Case (ii), since $\overline{U}\neq \overline{(b_2\cdots b_{2g})^t}$, we have $\overline{U_1}=\overline{b_2\cdots b_{2g}\cdots }$. Hence,
$$\overline{X^{-1}U}=\overline{\underbrace{\underbrace{\cdots b_{2g+2 }\cdots b_{4g}}_{X_1^{-1}}(b_{2g+2}\cdots b_{4g})^{t_0}b_1}_{X^{-1}}~\underbrace{(b_{2}\cdots b_{2g})^{t_0}\underbrace{b_2\cdots b_{2g}\cdots}_{U_1}}_{U}}\quad ,$$ which contradicts the maximality of $t_0$.

According the above discussion, we can directly verify that $\overline{V}$ and $\overline{Y}$ in Eq. (\ref{eq Y V for case 2}) satisfy the required conditions (a--c). This completes the analysis of Case (2) and hence the proof of Lemma \ref{lem 4g-1-reduction 2}.    
\end{proof}

For the special case that $\overline{U}=\overline{(b_1\cdots b_{2g-1})^t}$, we have the following lemmas.

\begin{lem}\label{lem used in final}
 Let $\overline{U}=\overline{(b_1\cdots b_{2g-1})^t}$ for some $\overline{b_1\cdots b_{4g}}\in\RR$ and $t\geq 1$, and let $\overline{X}\neq \overline{b_{2g}\cdots}$ be an irreducible word. If $\overline{X^{-1}UX}$ is freely reduced. then it is irreducible.
\end{lem}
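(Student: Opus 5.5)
The plan is to apply the reduction machinery of Subsection \ref{subsect 3.1} twice, first at the junction of $\overline{X^{-1}}$ and $\overline{U}$ and then at the junction of $\overline{U}$ and $\overline{X}$, and to show that no subword of type $S_{(i)}$ can straddle either junction. Since $\overline{X}$ is irreducible, so is $\overline{X^{-1}}$. The word $\overline{U}=\overline{(b_1\cdots b_{2g-1})^t}$ is irreducible and even cyclically irreducible, because it contains no fractional relator of length $\geq 2g$. By the lemma that $S$-reduced is equivalent to irreducible, it therefore suffices to show that $\overline{X^{-1}UX}$ has no subword equal to a leading word of type $S_{(i)}$.

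Such a leading word must straddle a junction. Every leading word of type $S_{(2)}$, $S_{(3)}$ or $S_{(4)}$ contains either a fractional relator of length $\geq 2g+1$ or a piece of the form $\overline{a_1(a_2\cdots a_{2g})^s}$ (respectively $\overline{(a_1\cdots a_{2g-1})^s a_{2g}}$) for some $\overline{a_1\cdots a_{4g}}\in\RR$. Type $S_{(1)}$ is excluded by the free reduction hypothesis.

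\textbf{Right junction} $\overline{UX}$. The word $\overline{U}$ ends with $\overline{b_1\cdots b_{2g-1}}$. A fractional relator crossing into $\overline{X}$ that continues the pattern $b_1\cdots b_{2g-1}$ would force $\overline{X}=\overline{b_{2g}\cdots}$, which is excluded by hypothesis.

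\begin{itemize}
\item A fractional relator running in the opposite direction, of the form $\overline{b_{i}b_{i-1}\cdots}$, would need its last letter in $\overline{U}$ followed by the corresponding letter in $\overline{X}$. Since $\overline{U}$ ends with $\overline{\cdots b_{2g-2}b_{2g-1}}$, such a relator has at most one letter in $\overline{U}$. The $S_{(i)}$ leading words that start with a single letter are exactly those in items (2--4) of the second table in \cite[Lemma 3.2]{WZZ25}, that is, case (c) of Lemma \ref{lem observation of LLFR of reduction}. In each of them the remaining suffix of $\overline{X}$, or the word $\overline{b_{2g-1}}$ followed by $\overline{X}$, would be forced to be reducible, or would contradict the form of $\overline{U}$.
\item The leading words of type $S_{(3)}$ and $S_{(4)}$ whose periodic part lies in $\overline{U}$, namely $\overline{(a_1\cdots a_{2g-1})^s a_{2g}}$ with $a_i=b_i$, again force the next letter to be $b_{2g}$. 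This is excluded.
\item Any fractional relator of length $\geq 2g+1$ that uses more of $\overline{U}$ would need two consecutive blocks of $\overline{U}$ to lie in one relator. This is impossible because $b_{2g-1}b_1$ is not a subword of any element of $\RR$.
\end{itemize}

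\textbf{Left junction} $\overline{X^{-1}U}$. Here I invert: the word $\overline{X^{-1}U}$ is irreducible if and only if $\overline{U^{-1}X}$ is, since inversion maps the $S$-set to itself up to the order-symmetry. We have $\overline{U^{-1}}=\overline{(b_{2g-1}\cdots b_1)^t}$. The same analysis then applies using the identity $b_{2g-1}\cdots b_1 = $ (the letters $b_{4g-1}^{-1}\cdots$) and the cyclic permutation structure of $\RR$. The only dangerous continuation is $\overline{X}=\overline{b_{4g}\cdots}$, with $b_{4g}=b_{2g}^{-1}$. But $b_{2g}^{-1}$ followed by $b_{2g-1}$ is not freely cancelling, so I need to check directly which $S_{(4)}$ leading word of the form $\overline{b_{2g}(b_{2g-1}\cdots b_1)^s}$ could appear in $\overline{X^{-1}U}$. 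This corresponds to the pattern $\overline{X^{-1}}=\overline{\cdots b_{2g}}$, and the condition $b_1\succ b_{2g}$ in $S_{(4,t)}$ might make it reducible.

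\textbf{Main obstacle.} I expect this left-junction $S_{(4)}$ case to be the main obstacle. I would first try to show that $\overline{X^{-1}}=\overline{\cdots b_{2g}}$, i.e. $\overline{X}=\overline{b_{2g}^{-1}\cdots}=\overline{b_{4g}\cdots}$, is compatible with the order convention $b_1\succ b_{2g}$ only when the leading word is $\overline{(b_1\cdots b_{2g-1})^s b_{2g}}$ read in the forward direction, which would place it at the right junction. If that fails, I would fall back on checking the relevant items of \cite[Tables 4--7]{WZZ25} for $\rs(\overline{X^{-1}},\overline{U})$, as in the proof of Lemma \ref{lem observations only case of 4g-1}.

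Finally, I need to rule out a leading word covering all of $\overline{U}$ and both junctions. Its middle portion would be a piece of a fractional relator containing $\overline{b_1\cdots b_{2g-1}}$ flanked on both sides, which is exactly the configuration analysed in Lemma \ref{lem 4g-1-reduction 1}. That configuration forces $\overline{X}=\overline{b_{2g}\cdots}$ and is therefore excluded.
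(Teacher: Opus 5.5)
\textbf{Verdict: the proposal has a gap that cannot be closed, because the statement is false as written.} The failing step is the first bullet of your right-junction analysis. There you claim that every configuration of Lemma~\ref{lem observation of LLFR of reduction}(c) with a single letter in $\overline{U}$ forces a reducible word or contradicts the shape of $\overline{U}$. Take $b_r=b_{2g-1}$, the last letter of $\overline{U}$. These reverse-direction configurations put no condition on $\overline{U}$, and they are compatible with $\overline{X}$ being irreducible. The hypothesis $\overline{X}\neq\overline{b_{2g}\cdots}$ only excludes the \emph{forward} continuations at the two junctions.

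Here is a counterexample. Take $\overline{b_1\cdots b_{4g}}=\overline{R}$ and $t=1$, so $\overline{U}=\overline{c_1c_2\cdots c_{2g-1}}$, and
\[
\overline{X}=\overline{b_{2g-2}\cdots b_1b_{4g}b_{4g-1}}=\overline{c_{2g-2}\cdots c_1c_{2g}^{-1}c_{2g-1}^{-1}}.
\]
This $\overline{X}$ is a $2g$-fractional relator (a subword of $\overline{R^{-1}}$) whose first letter is $\prec$ its last, so it is irreducible. It does not begin with $b_{2g}=c_{2g}$, and $\overline{X^{-1}UX}$ is freely reduced. But $\overline{UX}$ contains $\overline{c_{2g-1}c_{2g-2}\cdots c_1c_{2g}^{-1}c_{2g-1}^{-1}}$, a $(2g+1)$-fractional relator, i.e.\ a leading word of type $S_{(2,2g+1)}$. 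So $\overline{X^{-1}UX}$ is not even geodesic. For $g=2$ this reads $U=c_1c_2c_3$, $X=c_2c_1c_4^{-1}c_3^{-1}$, and $c_3c_2c_1c_4^{-1}c_3^{-1}=c_4^{-1}c_1c_2$ in $G$. Similarly, whenever $b_{2g-1}\succ b_{4g}$, the irreducible word $\overline{X}=\overline{b_{2g-2}\cdots b_1b_{4g}}$ produces the $S_{(4,1)}$-leading word $\overline{b_{2g-1}b_{2g-2}\cdots b_1b_{4g}}$ in $\overline{UX}$.

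The left-junction obstacle you flagged is also genuine. However, the dangerous continuation is $\overline{X}=\overline{b_{2g+2}\cdots}$, not $\overline{b_{4g}\cdots}$: then $\overline{X^{-1}}$ ends in $b_2$, and $\overline{b_2b_1}$ is a reverse-direction $2$-fractional relator. For example, let $g=2$, $\overline{b_1\cdots b_8}=\overline{c_1^{-1}c_2^{-1}c_3^{-1}c_4^{-1}c_1c_2c_3c_4}\in\RR$, $U=c_1^{-1}c_2^{-1}c_3^{-1}$ and $X=c_2c_3c_4$. The freely reduced word $\overline{X^{-1}UX}$ contains $\overline{c_4^{-1}c_3^{-1}c_2^{-1}c_1^{-1}}$, which is $S_{(4,1)}$-reducible because $c_4^{-1}\succ c_1^{-1}$.

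The paper's own proof has exactly this blind spot. It asserts that $\overline{X}\neq\overline{b_{2g}\cdots}$ leaves no $\llfr$ at either junction, and then only excludes the pattern of \cite[Table 4]{WZZ25}. A natural repair is to assume in addition that $\overline{X}$ begins with neither $b_{2g-2}$ nor $b_{2g+2}$. Then no fractional relator crosses either junction, and the paper's two-step argument applies. As stated, though, no proof can succeed.
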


 \begin{proof}
 Since $\overline{X}\neq \overline{b_{2g}\cdots}$, i.e., $\overline{X^{-1}}\neq \overline{\cdots b_{4g}}$, there is no $\llfr$ at the junction of $\overline{X^{-1}U}$ or at the junction of $\overline{UX}$. Furthermore, note that $(\overline{X^{-1}},\overline{U})$ has no reducing-subword pair satisfying \cite[Table 4]{WZZ25}, we have $\overline{X^{-1}U}$ is irreducible, i.e., $$\nf(X^{-1}U)=\overline{X^{-1}U}.$$ We show that $\overline{\nf(X^{-1}U)X}$ is also irreducible. Indeed, if $\overline{\nf(X^{-1}U)X}$ is reducible, then the reducing-subword pair $\rs(\overline{\nf(X^{-1}U)},\overline{X})$ would satisfy \cite[Table 4]{WZZ25}, since no $\llfr$ occurs at the junction of $\overline{\nf(X^{-1}U)X}$. Then, we have 
$$\overline{\nf(X^{-1}U)}=\overline{\underbrace{\cdots b_{4g}(b_1\cdots b_{2g-1})^{t_1}}_{X^{-1}}\underbrace{(b_{1}\cdots b_{2g-1})^t}_{U}}~,\qquad \overline{X}=\overline{(b_1\cdots b_{2g-1})^{t_2}b_{2g}\cdots}$$
for some $t_1\geq 0$ and $t_2\geq 1$. It follows that $\overline{X^{-1}}$ and $\overline{X}$ are contradictory. Hence, $\overline{X^{-1}UX}$ is irreducible.
 \end{proof}

\begin{lem}\label{lem for special case to cancel 4g-1}
    Let $\overline{U}=\overline{(b_1\cdots b_{2g-1})^t}$ for some $\overline{b_1\cdots b_{4g}}\in\RR$ and $t\geq 1$, and let $\overline{X}$ be an irreducible word such that $\overline{X^{-1}UX}$ is freely reduced. If $\overline{X^{-1}U}$ or $\overline{UX}$ contains a $(4g-1)$-$\llfr$, then there exists an irreducible word $\overline{Y}$ such that
    \begin{enumerate}
      \item [(a)] $X^{-1}UX=Y^{-1}UY$ as group elements.
      \item [(b)] $\overline{Y^{-1}UY}$ is freely reduced.
      \item [(c)] Neither $\overline{Y^{-1}U}$ nor $\overline{UY}$ has a $(4g-1)$-$\llfr$.
  \end{enumerate}
\end{lem}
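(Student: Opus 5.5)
The plan is to strip off copies of $\overline{b_1\cdots b_{2g-1}}$ from the conjugator, using the fact that they commute with $U$, and to argue by induction on $|\overline{X}|$. The key identity is (2) of the listed relations, $b_1\cdots b_{2g}=b_{2g}\cdots b_1$. Since $U=(b_1\cdots b_{2g-1})^t$ commutes with $P:=b_1\cdots b_{2g-1}$, any $Y$ with $X=P^{k}Y$ in $G$ gives $X^{-1}UX=Y^{-1}UY$. So conclusion (a) is automatic for such $Y$, and the work is to pick $k$ so that $\overline{Y}$ is irreducible and (b), (c) hold.

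\textbf{Step 1 (locating the $(4g-1)$-$\llfr$).} By symmetry, treat a $(4g-1)$-$\llfr$ $\overline{K}$ at the junction of $\overline{X^{-1}U}$; the junction of $\overline{UX}$ is handled by the mirror argument, replacing $P^{k}$ by $P^{-k}$ on the other side.

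Since $\overline{U}$ begins with $\overline{b_1b_2}$ read forwards, $\overline{K}$ must run along $\overline{b_1\cdots b_{4g}}$ rather than its inverse. If it ran along the inverse, the part of $\overline{K}$ inside $\overline{U}$ would be the single letter $b_1$, forcing $4g-2$ letters in $\overline{X^{-1}}$, which is impossible by irreducibility.

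The part of $\overline{K}$ inside $\overline{U}$ has length at most $2g-1$, because $\overline{U}$ is periodic with period $\overline{b_1\cdots b_{2g-1}}$ and then returns to $b_1\ne b_{2g}$. On the other hand, irreducibility of $\overline{X^{-1}}$ forbids more than $2g$ letters of $\overline{K}$ in $\overline{X^{-1}}$. Hence
$$\overline{X^{-1}}=\overline{X_1^{-1}b_{2g+1}\cdots b_{4g}},\qquad \overline{X}=\overline{b_{2g}b_{2g-1}\cdots b_1X_1}.$$
In $G$ this gives $X=b_1\cdots b_{2g}X_1=P\,b_{2g}X_1$.

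\textbf{Step 2 (one stripping step).} Set $\overline{Y_1}:=\nf(b_{2g}X_1)$. Then $X^{-1}UX=Y_1^{-1}UY_1$ and $|Y_1|\le |X|-(2g-1)<|X|$.

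I need $\overline{b_{2g}X_1}$ to be irreducible, or at least a clean description of $\nf(b_{2g}X_1)$. This is an analogue of Lemma \ref{claim 2} with the roles of the indices shifted. I would derive it as in that proof, from the second table of \cite[Lemma 3.2]{WZZ25}, using that $\overline{b_{2g}\cdots b_1X_1}$ is irreducible: any reduction of $\overline{b_{2g}X_1}$ would already produce a reduction of $\overline{X}$, except in a tail case analogous to Case (iv). In that tail case $\nf(b_{2g}X_1)$ is obtained by an $S_{(4)}$-move that does not change length, and I would record its form explicitly.

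Next I check that $\overline{Y_1^{-1}UY_1}$ is freely reduced. Otherwise $\overline{Y_1}$ begins with $b_1^{\pm1}$ or $b_{2g-1}^{\pm1}$-type letters that cancel against $\overline{U}$. Each such possibility either contradicts irreducibility of $\overline{X}$ or is absorbed by one more factor of $P$. In the latter case I replace $Y_1$ by $P^{\mp1}Y_1$, which is again shorter than $X$.

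\textbf{Step 3 (induction).} Among all $Y$ with $Y=P^{-k}X$ in $G$, $k\in\mathbb Z$, such that $\overline{Y}:=\nf(Y)$ makes $\overline{Y^{-1}UY}$ freely reduced, take one of minimal length. By Step 1, if $\overline{Y^{-1}U}$ or $\overline{UY}$ still had a $(4g-1)$-$\llfr$, then Step 2 would produce a strictly shorter admissible conjugator, contradicting minimality. So (c) holds, and (a), (b) hold by construction.

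The set of admissible $Y$ is nonempty because it contains $\overline{X}$ itself.

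\textbf{Expected main obstacle.} The delicate part is Step 2: controlling $\nf(b_{2g}X_1)$ and ruling out that the new word $\overline{Y_1^{-1}UY_1}$ fails to be freely reduced in a way not fixed by a further power of $P$. Concretely, this means ensuring that $\overline{Y_1}$ does not begin with $b_{2g}^{-1}$ or with a letter inverse to $b_{2g-1}$. I would first try to show both are excluded directly by irreducibility of $\overline{X}=\overline{b_{2g}\cdots b_1X_1}$ together with the maximal choice in Step 1. Failing that, I would enumerate the items of \cite[Lemma 3.2]{WZZ25} one by one, as in the proof of Lemma \ref{lem 4g-1-reduction 2}.
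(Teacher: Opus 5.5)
Your argument is correct. Its core mechanism is the same as the paper's: $P=b_1\cdots b_{2g-1}$ commutes with $U$, relation (2) rewrites the offending prefix of $\overline{X}$ as $P^{\pm1}b_{2g}$, and the stripped word $\overline{b_{2g}X_1}$ is irreducible.

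The difference is how you secure (c). The paper treats the $\overline{UX}$ side. It writes $\overline{X}=\overline{b_{2g}(b_{2g+1}\cdots b_{4g-1})^{t_1}X_1}$ with $t_1$ maximal and strips all $t_1$ periods at once to $\overline{Y}=\overline{b_{2g}X_1}$. It then checks (c) by two one-line contradictions:
\begin{itemize}
\item a $(4g-1)$-$\llfr$ in $\overline{Y^{-1}U}$ would force $\overline{X_1}=\overline{b_{2g-1}\cdots b_1\cdots}$, so $\overline{X}$ would contain $\overline{b_{4g-1}b_{2g-1}}$ and not be freely reduced;
\item a $(4g-1)$-$\llfr$ in $\overline{UY}$ contradicts the maximality of $t_1$.
\end{itemize}
You instead strip one period at a time and obtain (c) from a minimal-length choice in the coset $P^{\mathbb Z}X$. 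This replaces those two checks by the requirement that a single stripping step keeps the conjugator admissible. It gives no explicit $Y$, whereas the paper's construction yields $|Y|=|X|-t_1(2g-1)$ directly.

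The obstacle you anticipate in Step 2 does not arise. Lemma \ref{claim 2} applies verbatim with $t=1$:
\begin{itemize}
\item on the $\overline{X^{-1}U}$ side, to the cyclic shift $\overline{b_{4g}b_1\cdots b_{4g-1}}\in\RR$;
\item on the $\overline{UX}$ side, to $\overline{b_{4g}b_{4g-1}\cdots b_1}\in\RR$.
\end{itemize}
So $\overline{b_{2g}X_1}$ is itself irreducible, with no tail case and no $S_{(4)}$-move. Hence $\nf(Y_1)=\overline{b_{2g}X_1}$ and $|Y_1|=|X|-(2g-1)$ exactly.

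Free reducedness of $\overline{Y_1^{-1}UY_1}$ is then automatic. $\overline{Y_1}$ begins with $b_{2g}$, which does not cancel against the last letter $b_{2g-1}$ of $\overline{U}$. $\overline{Y_1^{-1}}$ ends with $b_{2g}^{-1}=b_{4g}$, which does not cancel against the first letter $b_1$ of $\overline{U}$.

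Your fallback of replacing $Y_1$ by $P^{\mp1}Y_1$, ``again shorter than $X$'', is not justified as stated: it only guarantees length at most $|X|$. It is never needed, so drop it and cite Lemma \ref{claim 2} instead.
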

\begin{proof}
Suppose $\overline{UX}$ contains a $(4g-1)$-$\llfr$. Then we have
$$\overline{X}=\overline{b_{2g}(b_{2g+1}\cdots b_{4g-1})^{t_1}X_1}$$ for some irreducible word $\overline{X_1}\in\W(\gs)$ with $t_1\geq 1$ maximal. Let $\overline{Y}:=\overline{b_{2g}X_1}$. Then by Lemma \ref{claim 2}, $\overline{Y}$ is irreducible. Moreover, we can directly verify that $\overline{Y}$ satisfies conclusions (a) and (b). We now prove conclusion (c) in the following.

If $\overline{Y^{-1}U}$ has a $(4g-1)$-$\llfr$, then we have
$$\overline{Y^{-1}U}=\overline{\underbrace{\cdots b_{2g+1}\cdots b_{4g-1}}_{X_1^{-1}}b_{4g}\underbrace{(b_1\cdots b_{2g-1})^t}_{U}}.$$ It leads to a contradiction that $\overline{X}=\overline{b_{2g}(b_{2g+1}\cdots b_{4g-1})^{t_1}\underbrace{b_{2g-1}\cdots b_1\cdots}_{X_1}}$ is reducible.

If $\overline{UY}$ has a $(4g-1)$-$\llfr$, then we have $$\overline{UY}=\overline{\underbrace{(b_1\cdots b_{2g-1})^t}_{U}b_{2g}\underbrace{b_{2g+1}\cdots b_{4g-1}\cdots}_{X_1}}.$$ 
It follows that $\overline{X}=\overline{b_{2g}(b_{2g+1}\cdots b_{4g-1})^{t_1}\underbrace{b_{2g+1}\cdots b_{4g-1}\cdots }_{X_1}},$ which contradicts the maximality of $t_1$.

Therefore, conclusion (c) also holds. For the case that $\overline{X^{-1}U}$ contains a $(4g-1)$-$\llfr$, the same argument applies.
\end{proof}

\section{Proof of Theorem \ref{main thm1}}\label{sect 5}

Before proving Theorem \ref{main thm1}, we still need the following lemmas.

\begin{lem}\label{freely reduced presentions of u, v}
Let $G$ be a surface group with symmetric presentation (\ref{symmetric presentation}), and let $\overline{U}=\overline{u_1u_2\cdots u_m}, ~\overline{V}=\overline{v_1v_2\cdots v_n}\in \W(\gs)$ be any two words (not necessarily irreducible) and $u, v\in G$ be two elements. Then 
\begin{enumerate}
    \item If $U$ and $V$ are conjugate (as group elements) in $G$, then $m\equiv n \pmod{2}$;
    \item If $u, v$ are conjugate in $G$, then $|u|+|v|$ is always even. Moreover, we can represent
$$u=X^{-1}A_2A_1X, \quad v=Y^{-1}A_1A_2Y$$
(as group elements in $G$) for some irreducible words $\overline{X}, \overline{Y}\in \W(\gs)$, where both $\overline{A_1A_2}$ and $\overline{A_2A_1}$ are cyclic permutations of the normal form $\nf([u])$ of the conjugacy class $[u]$, and both $\overline{X^{-1}A_2A_1X}$ and $\overline{Y^{-1}A_1A_2Y}$ are freely reduced.
\end{enumerate}
\end{lem}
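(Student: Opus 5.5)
For part (1) I will use the parity homomorphism. Define $\phi:F(\gs)\to\mathbb Z/2$ by sending every generator $c_i^{\pm1}$ to $1$. The relator $\overline{R}$ has length $4g$, so $\phi(R)=0$, and $\phi$ descends to a homomorphism $\phi:G\to\mathbb Z/2$. For any word, $\phi$ of the represented element is the length of the word mod $2$. Since $\mathbb Z/2$ is abelian, $\phi$ is constant on conjugacy classes. Hence if $U\sim V$ then $m\equiv\phi(U)=\phi(V)\equiv n\pmod 2$.

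The first claim of part (2) follows at once. Apply (1) to geodesic words for $u$ and $v$: this gives $|u|\equiv|v|\pmod 2$, so $|u|+|v|$ is even.

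For the representation, start with $\nf(u)$, which is an irreducible and hence freely reduced word. Write it as $\overline{X^{-1}WX}$ with $\overline{X}$ the longest suffix such that the corresponding prefix is $\overline{X^{-1}}$. Then $\overline{W}$ is cyclically freely reduced, $\overline{X^{-1}WX}$ is freely reduced, and $\overline{X}$ and $\overline{W}$ are irreducible as subwords of a normal form. The element $W$ lies in $[u]$, and $\overline{W}$ is a cyclically freely reduced representative. Now I need to pass from $\overline{W}$ to a cyclic permutation of $\nf([u])$.

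\textbf{Key step.} In a surface group (a small cancellation $C'(1/6)$ presentation for $g\ge2$), two cyclically geodesic words representing conjugate elements differ by a cyclic permutation together with rewritings inside relator pieces. This is the fact I would invoke. Concretely, I would use the cyclic reduction results of \cite{WZZ25}: cyclically rewriting $\overline{W}$ by the $S$-rules until it is cyclically irreducible. Each rewriting step is either a cyclic permutation of the word or a replacement of a subword by an equal subword. I would track these steps and absorb each cyclic shift into the conjugator $X$. The result is $u=X'^{-1}A_2A_1X'$ with $\overline{A_2A_1}$ a cyclic permutation of $\nf([u])$.

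After that, I will replace $X'$ by its normal form and cancel any free cancellation between $\overline{X'^{-1}}$ and $\overline{A_2A_1}$. Each such cancellation can be absorbed by cyclically permuting $A_2A_1$ again, so that $\overline{X'^{-1}A_2A_1X'}$ becomes freely reduced. Here I use that a cyclically irreducible word is cyclically freely reduced, so cancellation cannot happen on both sides at once unless it conjugates by a letter, which is again a cyclic shift.

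The same procedure applied to $v$ yields a cyclic permutation $\overline{A_1'A_2'}$ of the same word $\nf([v])=\nf([u])$, with a freely reduced conjugate $\overline{Y^{-1}A_1'A_2'Y}$. Since any two cyclic permutations of one word can be written in the forms $\overline{A_2A_1}$ and $\overline{A_1A_2}$ by choosing the cut points, this gives the stated form.

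The main obstacle is the key step above. I must justify that converting $\overline{W}$ to a cyclically irreducible word changes it only by cyclic shifts and in-word rewritings, and never needs a genuinely longer conjugator. I expect this to be handled by the cyclic version of the reduction tables in \cite{WZZ25}. Because the $S$-rules are length non-increasing, the process terminates.
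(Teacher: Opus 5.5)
Your part (1) is correct and more elementary than the paper's argument. The paper compares $|\nf(U)|$ with $|\overline{C^{-1}VC}|$ using that every $S_{(i)}$-rewriting changes length by an even amount. Your homomorphism $G\to\mathbb{Z}/2$ needs only that the relator has even length $4g$. The parity claim in (2) then follows as you say.

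The gap is in the representation part of (2), at the step you call the key step. As written it does not go through, for two reasons:
\begin{enumerate}
\item Termination is not justified. $S_{(4,t)}$ preserves length, and a cyclic shift can increase the length-lexicographic order. So ``length non-increasing'' gives no termination for a process that alternates shifts and rewritings.
\item Even if the process stopped at some cyclically irreducible word, nothing in your argument shows this word is a cyclic permutation of $\nf([u])$. By definition, $\nf([u])$ is the minimal-order cyclically irreducible word in $[u]$. You would need a further uniqueness statement (all cyclically irreducible words in a conjugacy class are cyclic permutations of one another), which you neither prove nor cite. The small cancellation fact you mention does not give it in this form.
\end{enumerate}

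The missing observation is that you do not need to track rewritings at all. The word $\overline{A}=\nf([u])$, whose existence is \cite[Theorem 1.2]{WZZ25}, represents an element of $[u]$. So there is some $c\in G$ with $u=c^{-1}Ac$. Put $\overline{C}=\nf(c)$, which is irreducible, and drop the detour through $\nf(u)$ and $\overline{W}$.

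Your last paragraph then finishes the proof. Since $\overline{A}$ is cyclically freely reduced, free cancellation in $\overline{C^{-1}AC}$ can occur on only one side at a time; cancellation on both sides would make the first and last letters of $\overline{A}$ mutually inverse. Each cancellation removes the first letter of $\overline{C}$ and replaces $\overline{A}$ by a cyclic permutation of itself, which is again cyclically freely reduced. After finitely many steps you get $u=X^{-1}A_uX$ with $\overline{X^{-1}A_uX}$ freely reduced and $\overline{X}$ a suffix of $\overline{C}$, hence irreducible. Doing the same for $v$ with the same $\overline{A}$, and choosing cut points, gives the statement. This is exactly the paper's proof.
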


\begin{proof}
   (1) Since $U$ and $V$ are conjugate in $G$, there exists a word $\overline{C}\in \W(\gs)$ such that 
   $$U=C^{-1}VC$$ as group elements. Hence they share the same normal form
   $$\nf(U)=\nf(C^{-1}VC).$$
Note that the reductions ``$U\to \nf(U)$ and $C^{-1}VC\to \nf(C^{-1}VC)$'' are achieved by finitely many $S_{(i)}$'s, and each $S_{(i)}$ $(i=1,\ldots, 4)$ decreases the word length by an even number. Therefore, 
$$m\equiv|\nf(U)|=|\nf(C^{-1}VC)|\equiv|\overline{C^{-1}VC}|=2|\overline{C}|+|\overline{V}|\equiv n \pmod{2}.$$

(2) From item (1), we immediately obtain that $|u|+|v|$ is even. 
Furthermore, by \cite[Theorem 1.2]{WZZ25}, the conjugacy class $[u]=[v]$ has a unique normal form $\nf([u]):=\overline{A}$, which is a cyclically irreducible word in $[u]$. Moreover, every element $v\in [u]$ can be written
$$v=C_v^{-1}AC_v $$
(as group element) for some irreducible word $\overline{C_v}\in\W(\gs)$. Note that the word $\overline{C_v^{-1}AC_v}$ may be reducible. 
If $\overline{C_v^{-1}AC_v}$ is not freely reduced,  i.e., it contains subwords of type $S_{(1)}$, then after reducing it by finite times $S_{(1)}$, we can get an expression 
$$v=X_v^{-1}A_vX_v$$
such that the word $\overline{X_v^{-1}A_vX_v}$ is freely reduced. Note that $\overline{A_v}$ is a cyclic permutation of $\overline{A}$, and $\overline{X_v}$ is a subword of the irreducible word $\overline{C_v}$ and hence $\overline{X_v}$ is again irreducible. Similarly, we also have such an expression
$u=X_u^{-1}A_uX_u.$
Since both $\overline{A_u}$ and $\overline{A_v}$ are cyclic permutations of the normal form $\nf([u])=A$, we have $\overline{A_u}$ is a cyclic permutation of $\overline{A_v}$, and hence they can be decomposed as $\overline{A_u}=\overline{A_2A_1}$ and $\overline{A_v}=\overline{A_1A_2}$.
\end{proof}

\begin{lem}\label{lem lower bound}
    For any surface group $G$ with a symmetric presentation (\ref{symmetric presentation}) and any $n\in \N$, we have
    $$\cl(2n+1)=\cl(2n)\geq n-1.$$
\end{lem}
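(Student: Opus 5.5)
Two things need proving: the equality $\cl(2n+1)=\cl(2n)$, and the lower bound $\cl(2n)\geq n-1$.

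\textbf{The equality.} By Lemma \ref{freely reduced presentions of u, v}(2), any two conjugate elements $u\sim v$ have $|u|+|v|$ even. So the set of conjugate pairs with $|u|+|v|\leq 2n+1$ is the same as the set with $|u|+|v|\leq 2n$. Both $\cl(2n+1)$ and $\cl(2n)$ are the maximum of $\cl(u,v)$ over this common set, so they agree.

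\textbf{The lower bound: the family.} I will exhibit, for each $n$ with $n\geq 2$, conjugate elements $u,v$ with $|u|+|v|\leq 2n$ and $\cl(u,v)\geq n-1$. For $n\leq 1$ the bound is trivial. Take $a=c_1$ and $b=c_2$ and set
$$u=a^{-k}\,b\,a^{k},\qquad v=b,\qquad k=n-1.$$
Then $|v|=1$ and $|u|\leq 2k+1=2n-1$, so $|u|+|v|\leq 2n$.

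\textbf{The lower bound: the estimate.} Any conjugator $w$ with $w^{-1}vw=u$ satisfies $w\in C(b)\,a^{k}$. In a surface group the centralizer of the primitive element $b$ is $\langle b\rangle$, so $w=b^{j}a^{k}$ for some $j\in\mathbb Z$. It remains to show $|b^{j}a^{k}|\geq k$ for every $j$. I plan to do this by abelianization. Under $G\to\mathbb Z^{2g}$ the generators map to a basis, since the relator has zero exponent sum in each generator. Hence $|w|\geq\|(\text{image of }w)\|_{1}=|j|+k\geq k=n-1$.

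\textbf{Main obstacle.} The step needing care is the centralizer claim $C(c_2)=\langle c_2\rangle$. I would justify it via the standard facts that centralizers of nontrivial elements in torsion-free hyperbolic surface groups are cyclic, and that $c_2$ is not a proper power because its image in the abelianization is primitive. If one prefers to avoid this, the abelianization estimate already handles it on its own. Any $w$ with $w^{-1}bw=u$ has $wa^{-k}$ commuting with $b$. Any element commuting with $b$ generates, together with $b$, a cyclic subgroup, so its abelian image is a multiple of $e_2$. Hence the image of $w$ is $k e_1+m e_2$, and $|w|\geq k$ follows directly.
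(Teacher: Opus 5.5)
Your proof is correct and follows the paper's argument. The equality comes from the parity statement in Lemma~\ref{freely reduced presentions of u, v}. The lower bound uses the same family: the paper takes $u=c_1$, $v=c_2^{1-n}c_1c_2^{n-1}$, which is yours with the generators and the roles of $u,v$ swapped.

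The paper only asserts $\cl(u,v)=n-1$ without proof. Your argument ($w\in C(b)a^{k}=\langle b\rangle a^{k}$, then $|w|\geq\|\cdot\|_1$ of its image in $\mathbb{Z}^{2g}$) supplies the missing justification. Your ``alternative'' route still relies on commuting elements of $G$ generating a cyclic subgroup, so it only bypasses the primitivity step, not the cyclic-centralizer fact.
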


\begin{proof}
Obviously, $\cl(2n+1)=\cl(2n)$ follows form Lemma \ref{freely reduced presentions of u, v}. Let $u=c_1$ and $v=c_2^{1-n}c_1c_2^{n-1}$. Note that $|u|=1$ and $|v|=2n-1$. Then $|u|+|v|\leq 2n$ and $\cl(2n)\geq\cl(u,v)=n-1.$
\end{proof}

We now prove Theorem \ref{main thm1} through distance computations on Cayley graph.

\begin{thm}[Theorem \ref{main thm1}]\label{thm main result}
Let $G$ be a surface group with a symmetric presentation (\ref{symmetric presentation}).
Then the conjugator length function $\cl:\N\to\N$ of $G$ satisfies
$$n-1\leq \cl(2n)=\cl(2n+1)\leq n+8g-1$$
for any genus $g\geq 2$.
\end{thm}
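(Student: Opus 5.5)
The lower bound and the equality $\cl(2n)=\cl(2n+1)$ are exactly Lemma \ref{lem lower bound}, so only the upper bound $\cl(2n)\le n+8g-1$ remains. Take conjugate $u,v$ with $|u|+|v|\le 2n$. By Lemma \ref{freely reduced presentions of u, v}(2), write $u=X^{-1}A_2A_1X$ and $v=Y^{-1}A_1A_2Y$. Here $\overline{X},\overline{Y}$ are irreducible, $\overline{A_1A_2}$ and $\overline{A_2A_1}$ are cyclic permutations of $\nf([u])$, and both words are freely reduced. The element $w=X^{-1}A_2Y$ then satisfies $w^{-1}uw=v$, and more generally so does any $X^{-1}ZY$ where $Z$ is a prefix-type conjugator between the two cyclic permutations. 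Since $u$ and $v$ share the common core $A:=\nf([u])$, we may choose which of $A_2$ or $A_1^{-1}$ to insert. This gives
$$\cl(u,v)\le |X|+|Y|+\min(|A_1|,|A_2|)\le |X|+|Y|+\tfrac12|A|.$$
The task is therefore to bound $|X|+|Y|+\tfrac12|A|$ in terms of $(|u|+|v|)/2$ plus a constant.

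The key inequality I would aim for is
$$|u|\ \ge\ 2|X|+|A|-c\qquad(\text{and likewise for } v \text{ with } Y),\qquad c\le 8g-2 \text{ or so},$$
which expresses that conjugating the cyclically irreducible core by $X$ cancels only a bounded amount. Summing the two inequalities gives
$$2n\ \ge\ |u|+|v|\ \ge\ 2(|X|+|Y|)+2|A|-2c,$$
hence
$$|X|+|Y|+\tfrac12|A|\le n+c-\tfrac12|A|\le n+c.$$
I would tune $c$ so that the final constant is $8g-1$; for instance, with $c=4g+\dots$ per side, adjusting via the parity from Lemma \ref{freely reduced presentions of u, v}(1).

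Proving the cancellation bound is where the Section \ref{sect 4} lemmas enter. First, if $\overline{X^{-1}A_2A_1X}$ contains a $(4g-1)$-$\llfr$, I replace $(X,A_2A_1)$ by $(Y',V')$ using Lemma \ref{lem 4g-1-reduction 2}. This yields the same group element, a cyclically permuted core, and no $(4g-1)$-$\llfr$. In the exceptional case $\overline{U}=\overline{(b_1\cdots b_{2g-1})^t}$ I use Lemma \ref{lem for special case to cancel 4g-1} and Lemma \ref{lem used in final} instead; Lemma \ref{lem 4g-1-reduction 1} guarantees there is only one such $\llfr$ to remove. The replacement shortens the conjugator, so this only helps the bound.

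With no $(4g-1)$-$\llfr$, I reduce $\overline{X^{-1}U}$ first, where Lemma \ref{lem observation 2 of tables} bounds the loss by $4g$. I then reduce $\nf(X^{-1}U)X$, treating two cases. If this word is freely reduced, I would argue using Lemma \ref{lem observations only case of 4g-1} (case (b), or the explicit words $W_1,W_2$ of case (a)) that the second junction also loses at most $4g$. If it is not freely reduced, Lemma \ref{lem freely reduced} gives either a total loss of at most $4g+2$ (case (b)), or the special shape of case (a). That special shape can be absorbed by moving $b_1$ into the core, replacing $X$ by a shorter conjugator. In all cases the loss is $O(g)$, at most about $8g$.

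The main obstacle is this second-junction analysis: ensuring that the two junction reductions of $\overline{X^{-1}UX}$ do not interact to produce a cascade longer than a constant, and bookkeeping the constants carefully enough to land exactly on $8g-1$. I would try first to obtain loss $\le 4g+2$ uniformly via Lemmas \ref{lem freely reduced} and \ref{lem observations only case of 4g-1}, and then check the arithmetic.
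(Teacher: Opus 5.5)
You follow the paper's skeleton: cores that are cyclic permutations of $\nf([u])$, removal of $(4g-1)$-$\llfr$s via Lemmas \ref{lem 4g-1-reduction 2} and \ref{lem for special case to cancel 4g-1}, and a per-side bound on $R_1+R_2=2|X|+|A|-|u|$ from Lemmas \ref{lem observation 2 of tables}, \ref{lem freely reduced} and \ref{lem observations only case of 4g-1}. There is one genuine gap: the way you dispose of the special shape in Lemma \ref{lem freely reduced}(a) does not work.

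\textbf{The gap.} In that case $\overline{X}=\overline{b_{2g+1}X_1}$ and $\overline{U}=\overline{(b_2\cdots b_{2g})^t}$. ``Moving $b_1$ into the core'' replaces $U$ by $b_1Ub_1^{-1}=(b_{2g}\cdots b_2)^t$. This word has the same length but, for $g\geq 2$, is \emph{not} a cyclic permutation of $\overline{U}$, hence not of $\nf([u])$. Two things go wrong.

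First, the cores of $u$ and $v$ are no longer cyclic permutations of each other, so $X^{-1}A_2Y$ is not available. The natural conjugator $Y^{-1}A_1b_1^{-1}X_1$ has exactly the old length, so nothing is shortened.

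Second, and more seriously, the move does not control the cancellation. The word $\overline{X_1^{-1}(b_{2g}\cdots b_2)^tX_1}$ need not be freely reduced, since $X_1$ may begin with $b_{2g+2}b_{2g+3}\cdots$. So none of Lemmas \ref{lem observation 2 of tables}, \ref{lem freely reduced}, \ref{lem observations only case of 4g-1} applies to it. After free reduction you could land in the special shape again, and you give no argument that the total loss stays bounded.

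The paper handles this case head-on:
\begin{enumerate}
\item Lemma \ref{lem used in final} forces $X$ to begin with the letter completing the relator.
\item One $S_{(3)}$ (or $S_{(2,2g+1)}$) step costs $2$.
\item The absence of a $(4g-1)$-$\llfr$ caps the ensuing one-sided free cancellation at $i\le 2g-2$ letters.
\item What remains is $\overline{EX_2}$ with $\overline{E},\overline{X_2}$ irreducible and no $(4g-1)$-$\llfr$, so Lemma \ref{lem observation 2 of tables} adds at most $4g$.
\end{enumerate}
Altogether $R_1+R_2\le 8g-2$. You need this computation, or an equivalent one, in place of the absorption step.

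\textbf{The constant.} Your conjugator estimate is actually sharper than the paper's, which only uses $|A_1|\le |A|-1$. Using $X^{-1}A_2Y$ or $X^{-1}A_1^{-1}Y$ gives $\cl(u,v)\le n+c-\max(|A_1|,|A_2|)$. So the per-side constant $c=8g$ that the paper proves already yields $n+8g-1$ whenever $u\neq 1$.
\begin{itemize}
\item Keep the $-\tfrac12|A|$ term rather than ``tuning'' $c$.
\item Drop the hope of a uniform loss $\le 4g+2$. Nothing supports it: when $\overline{\nf(X^{-1}U)X}$ is freely reduced the lemmas only give $4g+4g$. You do not need it.
\item In case (a) of Lemma \ref{lem observations only case of 4g-1}, the second junction \emph{does} carry a $(4g-1)$-$\llfr$, so Lemma \ref{lem observation 2 of tables} is unavailable there. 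You must compute directly ($R_1=2$, $R_2=4g-2$) and check that the result is irreducible, as the paper does.
\end{itemize}
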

\begin{proof}
The lower bound follows from Lemma \ref{lem lower bound}. We now prove the upper bound. 
 Let $u$ and $v$ be two conjugate elements in $G$ with length $|u|+|v|\leq 2n$.  By Lemma \ref{lem 4g-1-reduction 2}, Lemma \ref{lem for special case to cancel 4g-1} and Lemma \ref{freely reduced presentions of u, v}, we can obtain four irreducible words $\overline{X},\overline{Y},\overline{A_1}$ and $\overline{A_2}$ satisfying the following conditions:
 \begin{itemize}
     \item As group elements in $G$, $$u=X^{-1}A_2A_1X, \quad v=Y^{-1}A_1A_2Y.$$
     \item Both $\overline{A_1A_2}$ and $\overline{A_2A_1}$ are cyclic permutations of the normal form $\nf([u]):=\overline{A}$ of the conjugacy class $[u]=[v]$, and are therefore cyclically irreducible.
     \item Both $\overline{X^{-1}A_2A_1X}$ and $\overline{Y^{-1}A_1A_2Y}$ are freely reduced.
     \item None of $\overline{X^{-1}A_2A_1},~\overline{A_2A_1X},~\overline{Y^{-1}A_1A_2}$ and $\overline{A_1A_2Y}$ contains a $(4g-1)$-$\llfr$.
 \end{itemize}
 
Note that the reduction of $\overline{X^{-1}A_2A_1X}$  (resp. $\overline{Y^{-1}A_1A_2Y}$ ) to its normal form $\nf(u)$ (resp. $\nf(v)$) can be finished in the following two steps:
$$\overline{X^{-1}A_2A_1X}\xrightarrow{R_1} \overline{\nf(X^{-1}A_2A_1)X}\xrightarrow{R_2} \nf(u),$$
$$\overline{Y^{-1}A_1A_2Y}\xrightarrow{R'_1} \overline{\nf(Y^{-1}A_1A_2)Y}\xrightarrow{R'_2} \nf(v).$$
Each step reduces the length by
\begin{eqnarray*}
     R_1:=|X|+|A|-|X^{-1}A_2A_1|,&R_2:=|X^{-1}A_2A_1|+|X|-|u|,\\
     R_1':=|Y|+|A|-|Y^{-1}A_1A_2|,&R_2':=|Y^{-1}A_1A_2|+|Y|-|v|.
 \end{eqnarray*}
Summing the above four equations,  we have
\begin{eqnarray*}
   2(|X|+|Y|+|A|)-(R_1+R_2+R_1'+R_2')=|u|+|v|
    \leq 2n.
\end{eqnarray*}
Furthermore, since $\overline{Y^{-1}A_1X}$ is a conjugator of $u$ and $v$ (see Figure \ref{fig: conjugator}),  we have
\begin{eqnarray}\label{eq. upper bound for cl}
    \cl(u,v)&\leq&|Y^{-1}A_1X|\nonumber\\
    &\leq& |Y|+|A_1|+|X|\nonumber\\
    &\leq&|X|+|Y|+|A|-1\nonumber\\
    &\leq& n-1+\frac{1}{2}(R_1+R_2+R_1'+R_2').
\end{eqnarray}

\begin{figure}
     \centering
   \includegraphics[width=0.5\linewidth]{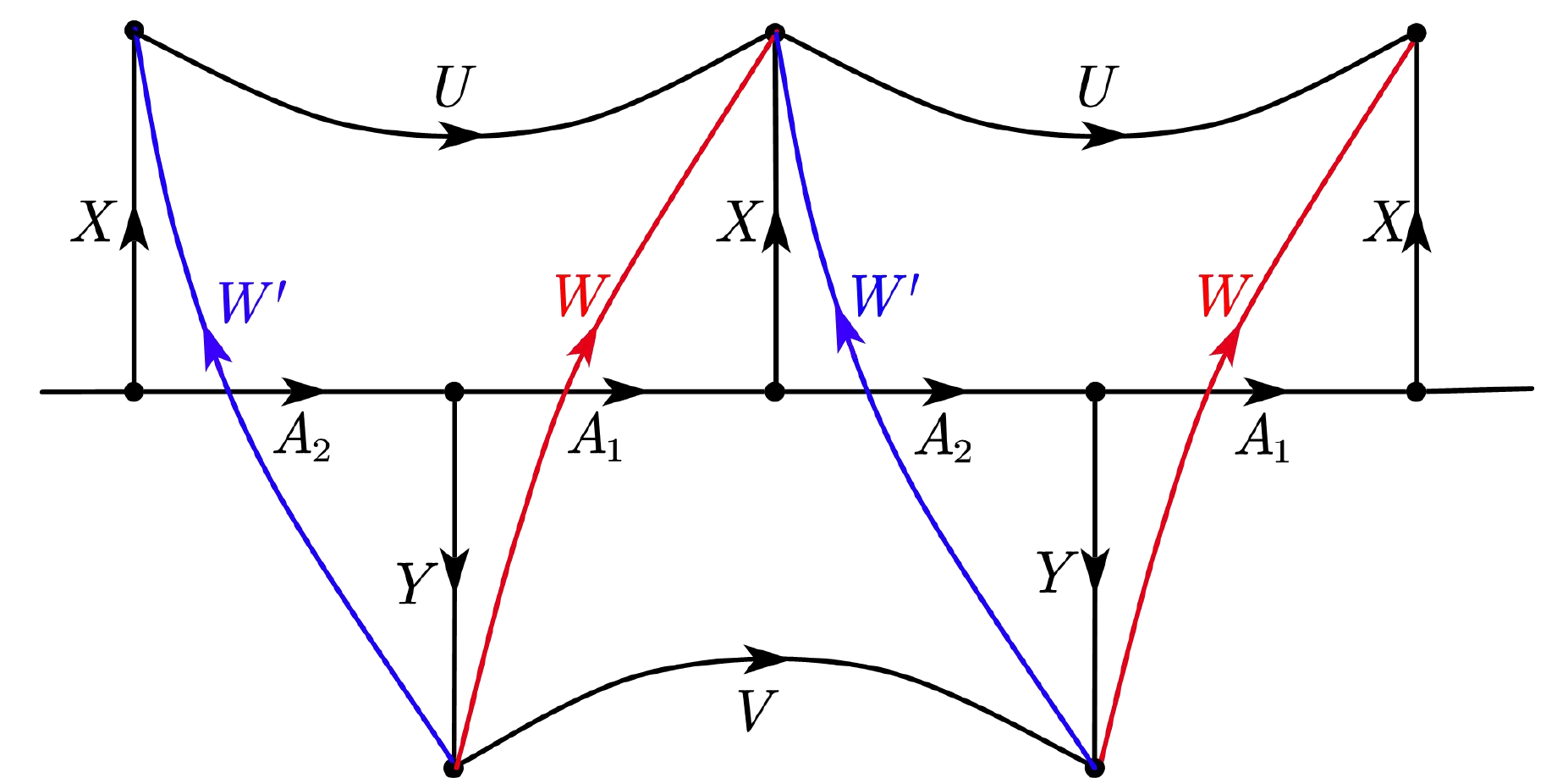}
     \caption{In this Cayley graph w.r.t. the symmetric presentation, as group elements, $W=Y^{-1}A_1X$, $W'=Y^{-1}A_2^{-1}X$ and $U=(W')^{-1}VW'=W^{-1}VW$. Therefore, $\overline{W}$ and $\overline{W'}$ represent two different conjugators of $U=u$ and $V=v$.}
     \label{fig: conjugator}
 \end{figure}

To complete the proof of Theorem \ref{main thm1}, it suffices to prove: 

\begin{claim}\label{claim 2.}
In Eq. (\ref{eq. upper bound for cl}), we have $R_1+R_2+R_1'+R_2'\leq 16g.$ 
\end{claim}

\begin{proof}[Proof of Claim 2.]
Applying Lemma \ref{lem observations only case of 4g-1} to the irreducible word $\overline{X}$ and the cyclically irreducible word $$\overline{A'}:=\overline{A_2A_1},$$
we have the following two cases.

   \textbf{Case (1)} Lemma \ref{lem observations only case of 4g-1}(a) holds. 
   
   Then we have the following reduction process for "$\overline{W_1}$" in Lemma \ref{lem observations only case of 4g-1}(a):
\begin{eqnarray*}
\overline{X^{-1}A'X}&=&\overline{\underbrace{X'^{-1} b_{4g}\cdots b_{2g+2}b_{2g+1}}_{X^{-1}}\underbrace{(b_{2g+2}\cdots b_{4g})^{t_0}b_1b_{2g+3}\cdots b_{4g}}_{A':=A_2A_1}\underbrace{b_1b_2\cdots b_{2g}X'}_{X}}\\
   &\xrightarrow{S_{(3,t_0)}}& \overline{\underbrace{X'^{-1} (b_{4g}\cdots b_{2g+2})^{t_0+1}b_{2g+3}\cdots b_{4g}}_{\nf(X^{-1}A')}\underbrace{b_1b_2\cdots b_{2g}X'}_{X}}\\
   &\xrightarrow{S_{(2,4g-1)}}&\overline{\underbrace{X'^{-1}(b_{4g}\cdots b_{2g+2})^{t_0}b_{4g}\cdots b_{2g+3}}_{B} \underbrace{b_1X'}_{C}}:=\overline{D}.
\end{eqnarray*}
Note that $\overline{B}\subset\nf(X^{-1}A')$,  hence it is irreducible. Furthermore, since $\overline{X}$ is irreducible, by Lemma \ref{claim 2},  $\overline{C}$ is also irreducible. Since there is no $\llfr$ at the junction of $\overline{BC}$ and $(\overline{B},\overline{C})$ does not satisfy the condition in \cite[Table 4]{WZZ25}, it follows that  $\overline{BC}=\overline{D}$ is irreducible. Therefore, we have 
\begin{equation}\label{eq. case 1 R}
R_1+R_2=2+(4g-2)=4g.    
\end{equation}
For the symmetric case "$\overline{W_2}$" in Lemma \ref{lem observations only case of 4g-1}(a), by similar arguments, Eq. (\ref{eq. case 1 R}) also holds.

\textbf{Case (2)} Lemma \ref{lem observations only case of 4g-1}(b) holds, i.e., neither $\overline{\nf(X^{-1}A')X}$ nor $\overline{X^{-1}\nf(A'X)}$ has a $(4g-1)$-$\llfr$. 

If $\overline{\nf(X^{-1}A')X}$ is freely reduced. Then by Lemma \ref{lem observation 2 of tables}, we have 
\begin{equation}\label{eq. 2.1R}
R_1+R_2\leq 4g+4g=8g.
\end{equation}

If $\overline{\nf(X^{-1}A')X}$ is not freely reduced and $\overline{A'}\neq\overline{(b_1\cdots b_{2g-1})^t}$,
then by Lemma \ref{lem freely reduced}(b), we have 
\begin{equation}\label{eq. 2.2R}
    R_1+R_2\leq 4g+2.
\end{equation}

If $\overline{\nf(X^{-1}A')X}$ is not freely reduced and $\overline{A'}=\overline{(b_1\cdots b_{2g-1})^t}$ for some $t\geq  1$ and $\overline{b_1\cdots b_{4g}}\in\RR$, then by Lemma \ref{lem used in final}, we obtain the irreducible word $\overline{X}=\overline{b_{2g}X_1}$ for some irreducible word $\overline{X_1}\neq \overline{b_{4g}\cdots}$. Therefore,
\begin{eqnarray}\label{eq ii-2}  
\overline{X^{-1}A'X}&=&\overline{\underbrace{X^{-1}_1b_{4g}}_{X^{-1}}\underbrace{(b_1\cdots b_{2g-1})^{t}}_{A'}\underbrace{b_{2g}X_1}_{X}}\notag\\
&\xrightarrow{S_{(3)}}&\overline{X_1^{-1}\underbrace{(b_{2g-1}\cdots b_1)^t}_{B}X_1}.
\end{eqnarray}
Furthermore, if $\overline{X_1^{-1}BX_1}$ is freely reduced, then by Lemma \ref{lem used in final}, we have $\overline{X_1^{-1}BX_1}$ is irreducible and thus 
\begin{equation}\label{R_1+R_2=2}
   R_1+R_2=2. 
\end{equation} 
If $\overline{X_1^{-1}BX_1}$ is not freely reduced, then we have two subcases.
\begin{enumerate} 
    \item[(a)] $\overline{X_1}=\overline{(b_{2g+1}\cdots b_{4g-1})^{t_0}b_{2g+1}\cdots b_{2g+i}X_2}$ with $t_0\geq 0$ maximal firstly and $1\leq i\leq 2g-2$ maximal secondly, for some irreducible word  $\overline{X_2}\neq \overline{b_{2g+i+1}\cdots }$ or $\overline{b_i\cdots}$.
Since $\overline{X^{-1}A'X}$ has no $(4g-1)$-$\llfr$, we have $t_0=0$. Then we have a reduction
\begin{eqnarray}\label{eq, iS(1)}
    \overline{X_1^{-1}BX_1}&=&\overline{\underbrace{X_2^{-1}b_{i}\cdots b_1}_{X_1^{-1}}\underbrace{(b_{2g-1}\cdots b_1)^t}_{B}\underbrace{b_{2g+1}\cdots b_{2g+i}X_2}_{X_1}}\notag\\
    &\xrightarrow{i\cdot S_{(1)}}&\overline{\underbrace{X_2^{-1}b_{i}\cdots b_1}_{X_1^{-1}}\underbrace{(b_{2g-1}\cdots b_1)^{t-1}b_{2g-1}\cdots b_{i+1}}_{ C}X_2}:=\overline{D}.
\end{eqnarray}
Note that $\overline{D}$ is freely reduced because $\overline{X_2}\neq b_{2g+i+1}$. Furthermore, since there is no $\llfr$ at the junction of $\overline{X_1^{-1}C}$ and $(\overline{X_1^{-1}},\overline{C})$ has no reducing-subword pair satisfying \cite[Table 4]{WZZ25}, we have $\overline{X_1^{-1}C}:=\overline{E}$ is irreducible. Moreover, $\overline{EX_2}$ has no $(4g-1)$-$\llfr$ because $\overline{X_2}\neq \overline{b_i\cdots}$. Therefore, by Lemma \ref{lem observation 2 of tables}, we have 
\begin{equation}\label{eq. EX2}
  |E|+|X_2|-|EX_2|\leq 4g.
\end{equation}
Combining Eqs. (\ref{eq ii-2})(\ref{eq, iS(1)}) and (\ref{eq. EX2}), we obtain
\begin{equation}\label{eq. R1+R2. 4}
    R_1+R_2\leq  2+2\cdot i+4g\leq  2+2(2g-2)+4g=8g-2.
\end{equation}

\item[(b)] $\overline{X_1}=\overline{(b_{2g-1}\cdots b_{1})^{t_0}b_{2g-1}\cdots b_{i}X_2}$ with $t_0$ maximal firstly and $2\leq i\leq 2g-1$ minimal secondly, for some $\overline{X_2}$. The argument is similar to the above Case (a).
\end{enumerate}

In conclusion, for Case (2), by Eqs. (\ref{eq. 2.1R})(\ref{eq. 2.2R})(\ref{R_1+R_2=2}) and (\ref{eq. R1+R2. 4}), we have 
\begin{equation}\label{eq. case 2 R}
    R_1+R_2\leq \max\{8g, 4g+2, 2, 8g-2\}=8g.
\end{equation} 
\renewcommand{\qedsymbol}{}
\end{proof}
Finally, combining Eqs. (\ref{eq. case 1 R}) and (\ref{eq. case 2 R}), we have $R_1+R_2\leq 8g.$ Similarly, we also have $R_1'+R_2'\leq 8g$. 
Therefore, 
$$R_1+R_2+R_1'+R_2'\leq 16g.$$ 
The proof is complete.
\end{proof}

\begin{rem}
For free groups, Lemma \ref{freely reduced presentions of u, v}  and Lemma \ref{lem lower bound} also hold clearly. Moreover, since $S_{(1)}$ is the unique reduction operator in free groups,  the words $\overline{X^{-1}A_2A_1X}$ and $\overline{Y^{-1}A_1A_2Y}$ in Lemma \ref{freely reduced presentions of u, v}(2) are indeed irreducible. Therefore, $R_i=R'_i=0 (i=1,2)$ in the proof of Theorem \ref{thm main result} and hence we can obtain 
\begin{equation}\label{CL=n-1}
   \cl(2n+1)=\cl(2n)=n-1 
\end{equation} 
for any $n\in\N$ in a free group.
\end{rem}
\begin{ques}
Does the above Eq. (\ref{CL=n-1}) hold in a surface group?
\end{ques}



\begin{thebibliography}{dd}
\bibitem{AB23} C.~R. Abbott and J.~A. Behrstock, \emph{Conjugator lengths in hierarchically hyperbolic groups}, Groups Geom. Dyn. {\bf 17} (2023), no.~3, 805--838.

\bibitem{AS16} Y. Antol\'in and A.~W. Sale, \emph{Permute and conjugate: the conjugacy problem in relatively hyperbolic groups}, Bull. Lond. Math. Soc. {\bf 48} (2016), no.~4, 657--675.

\bibitem{BM23} J.~M. Belk and F. Matucci, \emph{Conjugator length in Thompson's groups}, Bull. Lond. Math. Soc. {\bf 55} (2023), no.~2, 793--810.

\bibitem{BH05} M.~R. Bridson and J. Howie, \emph{Conjugacy of finite subsets in hyperbolic groups}, Internat. J. Algebra Comput. {\bf 15} (2005), no.~4, 725--756.

\bibitem{BR25a} M. Bridson and T. Riley, \emph{The lengths of conjugators in the model filiform groups}, (2025), preprint, \href{https://arxiv.org/abs/2506.01235}{arXiv:2506.01235}.

\bibitem{BR25b} M. Bridson and T. Riley, \emph{Linear Diophantine equations and conjugator length in 2-step nilpotent groups}, (2025), preprint, \href{https://arxiv.org/abs/2506.01239}{arXiv:2506.01239}.

\bibitem{BR25c} M. Bridson and T. Riley, \emph{Snowflake groups and conjugacy length functions with non-integer exponents}, (2025), preprint, \href{https://arxiv.org/abs/2512.14038}{arXiv:2512.14038}.

\bibitem{BR25d} M. Bridson and T. Riley, \emph{Groups with fast-growing conjugator length functions}, (2025), preprint, \href{https://arxiv.org/abs/2512.23674v1}{arXiv:2512.23674}.

\bibitem{De12} M. Dehn, \emph{Transformation der Kurven auf zweiseitigen Fl\"achen}, Math. Ann. {\bf 72} (1912), no.~3, 413--421. 


\bibitem{Ly89} I.~G. Lys\"enok, \emph{Some algorithmic properties of hyperbolic groups}, Math. USSR-Izv. {\bf 35} (1990), no.~1, 145--163; translated from Izv. Akad. Nauk SSSR Ser. Mat. {\bf 53} (1989), no.~4, 814--832, 912.

\bibitem{Sa12} A.~W. Sale, \emph{The length of conjugators in solvable groups and lattices of semisimple Lie groups}, ProQuest LLC, Ann Arbor, MI, 2012.

\bibitem{WZZ25} K. Wang, Q. Zhang and X. Zhao, \emph{Word length formulae and normal forms of conjugacy classes in surface groups}, (2025), 61pp, preprint, \href{https://arxiv.org/abs/2511.12862v2}{arXiv:2511.12862v2}.

\end{thebibliography}
\end{document}